\newtheorem{theorem}{Theorem}[section]
\newtheorem{lemma}[theorem]{Lemma}
\newtheorem{proposition}[theorem]{Proposition}
\newtheorem{corollary}[theorem]{Corollary}
\theoremstyle{definition}
\newtheorem{definition}[theorem]{Definition}
\theoremstyle{remark}
\newtheorem{remark}[theorem]{Remark}
\numberwithin{equation}{section}
\newcommand{\bu}{\mathbf{u}}
\newcommand{\dx}{\, \mathrm{d} \mathbf{x}}
\begin{document}

\title[Incompressible quasineutral  limit of the stochastic Navier--Stokes--Poisson system]
{The combined incompressible quasineutral  limit of the stochastic Navier--Stokes--Poisson system}%

\author{Donatella Donatelli}
\address{Department of Information Engineering, Computer Science and Mathematics \\
University of L'Aquila\\
67100 L'Aquila, Italy}
\email{donatella.donatelli@univaq.it}


\author{Prince Romeo Mensah}
\address{GSSI - Gran Sasso Science Institute \\
Viale F. Crispi, 7 \\
67100 L'Aquila, Italy.}
\email{romeo.mensah@gssi.it}


\date{\today}



\begin{abstract}
This paper deals with the combined incompressible quasineutral limit of the  weak martingale solution of the   compressible Navier--Stokes--Poisson system perturbed by a stochastic forcing term in the whole space. In the framework of  ill prepared initial data, we show the convergence in law to a weak martingale solution of a stochastic incompressible Navier--Stokes system. The result holds true for any arbitrary nonlinear forcing term with suitable growth. The proof is based on the analysis of acoustic waves but since we are dealing with a stochastic partial differential equation, the existing deterministic tools for treating this second-order equation breakdown. Although this might seem as a minor modification,  to  handle the acoustic waves in the stochastic compressible Navier--Stokes system, we produce  suitable dispersive estimate for  first-order system of equations, which are an added value to the existing theory.  As a by-product of this dispersive estimate analysis,  we are also able to prove a convergence result in the case of the zero-electron-mass limit for a stochastic  fluid dynamical plasma model.
\end{abstract}

\maketitle

\section{Introduction}
\label{sec:intro}
This paper deals with  a singular limit result for the following  stochastically-forced compressible Navier--Stokes--Poisson system 
\begin{align}
&\mathrm{d} \varrho^\varepsilon + \mathrm{div}(\varrho^\varepsilon \mathbf{u}^\varepsilon) \, \mathrm{d}t =0, 
\label{contEqBa}
\\
&\mathrm{d}(\varrho^\varepsilon \mathbf{u}^\varepsilon) +
\big[
\mathrm{div} (\varrho^\varepsilon \mathbf{u}^\varepsilon \otimes \mathbf{u}^\varepsilon) + \varepsilon^{-2}\nabla   (\varrho^\varepsilon)^\gamma \big]\, \mathrm{d}t 
=\big[ \nu_1 \Delta \mathbf{u}^\varepsilon + (\nu_2 + \nu_1)\nabla \mathrm{div} \mathbf{u}^\varepsilon
\nonumber\\
&\qquad\qquad\qquad\qquad\qquad\quad\qquad \qquad + \varepsilon^{-2}{\varrho^\varepsilon} \nabla V^\varepsilon \big] \, \mathrm{d}t + \mathbf{G}(\varrho^\varepsilon,  \varrho^\varepsilon\mathbf{u}^\varepsilon) \, \mathrm{d}W,
\label{momEqBa}
\\
&\lambda^2\Delta V^\varepsilon = \varrho^\varepsilon -1 
\label{elecFieldBa}
\end{align}
which is a simplified model (for instance, the temperature equation is not taken into account) to describe the dynamics of a plasma where the compressible electron fluid interacts with its own electric field against a constant charged ion background.\\
In the model \eqref{contEqBa}--\eqref{elecFieldBa}, ${\nu}_1,{\nu}_2\geq0$ are the viscosity constants,  $\varepsilon>0$ is the Mach number, $\lambda$ is the Debye length and the adiabatic exponent is $\gamma>\frac{3}{2}$. 
The  independent variables $(t,x,\omega)$ are contained in the random spacetime cylinder $[0,T]\times\mathbb{R}^3 \times \Omega$ where $T>0$ is a fixed constant and $\Omega$ is the sample space of a probability space $(\Omega, \mathscr{F}, \mathbb{P})$.  
On the other hand, the dependent variables are the charged density $\varrho(t,x)\in \mathbb{R}_{\geq0}=[0,\infty)$, the velocity vector $\mathbf{u}(t,x)\in \mathbb{R}^3$, the electrostatic potential $V(t,x)\in \mathbb{R}_{\geq0}$, and to enforce stochasticity, the cylindrical Wiener process $W(t,\omega)$. Since the system of equations \eqref{contEqBa}--\eqref{elecFieldBa} are mutually coupled, for simplicity, we enforce randomness throughout the system by way of the Wiener process. Nevertheless, the analysis to be performed remains valid if one assumes that all the dependent variables depend on the random parameter.
\\
Since the problem \eqref{contEqBa}--\eqref{elecFieldBa} is posed on the entire Euclidean space $\mathbb{R}^3$, we complement it with  the far-reach condition
\begin{align}
\varrho \rightarrow1,  \quad \vert  \mathbf{u}\vert \rightarrow 0, \quad \text{as} \quad \vert x\vert \rightarrow \infty
\end{align}
to describe the behaviour of the density and velocity field as they approach spatial infinity.
\\
Recently, the existence of a weak martingale solution  of the system \eqref{contEqB}--\eqref{elecFieldB} was carried out in \cite{donatelli2020dissipative} for a fixed Mach number $\varepsilon>0$ and fixed Debye length $\lambda>0$. In analogy with the purely fluid system studied
in  \cite{breit2018stoch, breit2016stochastic, mensah2016existence, mensah2019theses, smith2015random}, the proof of the result in \cite{donatelli2020dissipative} relies on the multi-layer approximation scheme initiated by Lions \cite{lions1998mathematical} and Feireisl, Novotn\'y and Petzeltov\'a \cite{feireisl2001existence} with suitable adaptation due to the presence of the Poisson equation \eqref{elecFieldBa}.
\\
We recall that the Mach number $\varepsilon$ is given by the ratio of the characteristic fluid velocity and  the sound speed and it is related with the fluid compressibility. In  many real world phenomena,  the fluid velocities are smaller compared to the sound speed, therefore, it is of interest to consider small values of $\varepsilon$ and to perform the related asymptotic analysis.  In the physical regime of small Mach number, it is observed that the pressure is unable to generate density variations, therefore, the asymptotic dynamics is described by the   incompressible physical state.
On the other hand, the Debye length $\lambda$ which is the coupling constant between the Navier--Stokes equations \eqref{contEqBa}-\eqref{momEqBa} and the Poisson equation \eqref{elecFieldBa}  is a characteristic physical parameter related to the phenomenon of the so-called ``Debye shielding'' \cite{Goldston1995} and in plasma physics, represents the distance over which the usual Coulomb field is killed off exponentially by the polarization of the plasma.  In many cases, the Debye length is very small compared to the macroscopic length, so it makes sense to consider the  limit  $\lambda\to 0$. This type of limit is called a  quasineutral limit since the charge density almost vanishes identically. If we consider the Poisson equation \eqref{elecFieldBa}, we can observe that under this regime, the particle density is constrained to be close to the background density which is one in our case. Hence, we can also conclude  in this case that the asymptotic state is the incompressible one.\\
Because of the previous considerations, it makes sense to study the {\em combined incompressible, quasineutral limit}. In this paper, we set\\
\begin{equation}
\lambda^2=\varepsilon^{\beta}, \qquad \beta>0,
\label{valuebeta}
\end{equation}
where the values of $\beta$ will be determined later on and we perform the limit, as $\varepsilon \to 0$, of the following system,\\
\begin{align}
&\mathrm{d} \varrho^\varepsilon + \mathrm{div}(\varrho^\varepsilon \mathbf{u}^\varepsilon) \, \mathrm{d}t =0, 
\label{contEqB}
\\
&\mathrm{d}(\varrho^\varepsilon \mathbf{u}^\varepsilon) +
\big[
\mathrm{div} (\varrho^\varepsilon \mathbf{u}^\varepsilon \otimes \mathbf{u}^\varepsilon) + \varepsilon^{-2}\nabla   (\varrho^\varepsilon)^\gamma \big]\, \mathrm{d}t 
=\big[ \nu_1 \Delta \mathbf{u}^\varepsilon + (\nu_2 + \nu_1)\nabla \mathrm{div} \mathbf{u}^\varepsilon
\nonumber\\
&\qquad\qquad\qquad\qquad\qquad\quad\qquad \qquad + \varepsilon^{-2}{\varrho^\varepsilon} \nabla V^\varepsilon \big] \, \mathrm{d}t + \mathbf{G}(\varrho^\varepsilon,  \varrho^\varepsilon\mathbf{u}^\varepsilon) \, \mathrm{d}W,
\label{momEqB}
\\
&\varepsilon^{\beta}\Delta V^\varepsilon = \varrho^\varepsilon -1 .
\label{elecFieldB}
\end{align}
\\
We will be able to show rigorously, in the setting of ill prepared initial data, that the target dynamics is given by the  stochastically-forced incompressible Navier--Stokes system.
\\ 
The study of singular limits for fluid-type systems such as \eqref{contEqB}--\eqref{elecFieldB} has seen extensive work done in the deterministic system starting with the pioneering work by Klainerman and Majda \cite{klainerman1981singular}. The zero Mach number limit result for the equations \eqref{contEqB}-\eqref{momEqB} have been extensively studied in the deterministic setting. We cannot give an exhaustive review on these results so we refer the reader to  \cite{desjardins1999incompressible, desjardins1999low, danchin2005low, donatelli2008euler, donatelli2010incompress, lions1998incompressible, schochet2005mathematical},  the references within them as well as papers citing them. In the stochastic setting, however, we only know of the results in 
\cite{breit2015incompressible, breit2018stoch, mensah2016existence, mensah2019theses}.
Nevertheless, related singular limit results in the stochastic setting also exist in \cite{mensah2018multi} where the two-dimensional stochastic incompressible Navier--Stokes equation is obtained from the three-dimensional stochastic compressible Navier--Stokes--Coriolis equation and in \cite{breitMen2019stochastic} where the stochastic compressible Euler equation is also derived from the stochastic compressible Navier--Stokes equation.
It is worth noting that by 
adapting the method of convex integration, which was introduced by De Lellis and Sz\'ekelyhidi \cite{delellis2012hprinciple} in the context of fluid dynamics, one can show that the initial value problem for the stochastic compressible Euler equation is actually ill-posed in the class of weak (distributional) solutions, see \cite{breit2020solvability}.
We also mention \cite{breit2015compressible} that constructs the relative energy inequality for the stochastic compressible Navier--Stokes system and apply this inequality to show weak-strong uniqueness results as well as to obtain the stochastic incompressible Euler equation via a singular limit argument.
Finally, we also mention the result \cite{breit2019stationary} in which stationary solutions are constructed for the compressible Navier--Stokes system driven by stochastic forces  in the full range of  adiabatic exponent $\gamma>\frac{3}{2}$ available for existence theory. 
\\
In the last years,  the quasineutral limit for the Navier--Stokes--Poisson system have also been widely studied in the deterministic setting in the case of weak and strong solutions. For instance, see  \cite{W04}  for the smooth solution setting with well-prepared initial data, or \cite{JLW08} in the case of weak solutions  both in the whole space and in the torus without restrictions on the viscous coefficients, with well-prepared initial data. A more general analysis in the context of weak solutions and in the framework of general initial data was performed  in \cite{DM12} where all the regularity and smallness assumptions of the previous paper were removed. In the contest of  combined quasineutral  and inviscid limit,  we mention  \cite{JW06} and \cite{DoFeNo15}. Finally  in  \cite{donatelli2008quasineutral},  the authors investigated, in the whole space,  the combined incompressible quasineutral limit of the isentropic Navier--Stokes--Poisson system   and obtained the convergence of weak solution of the Navier--Stokes--Poisson system to the weak solution of the incompressible Navier--Stokes equations by means of dispersive estimates of Strichartz's type. However, to our present knowledge,  there are no results  available for the combined low Mach and Debye length limit in the stochastic setting.
\\
Our aim in this paper  is to show in the framework of ill-prepared initial data that any such family of solutions parametrised by this Debye length, converges weakly in law, as the Debye length vanishes, to a solution of the stochastically-forced incompressible Navier--Stokes equation which is weak in the sense of probability and also weak in the sense of distributions. This result is virtually a stochastic analogue of \cite{donatelli2008quasineutral} with mainly, three significant improvements. Firstly, by introducing stochasticity in the model, we can account for turbulence in the physical system whereas from a purely mathematical point of view, we are presented with additional difficulties such as obtaining equivalent stochastic a priori estimates and compactness results. These improvements are especially non-trivial at the heart of our analyses which is to show that the gradient part (or the curl-free part) of our vector-valued dependent variables are not present in the limit system. Indeed,  in the case of ill-prepared initial data and incompressible limit analysis,  one of the main issue is the lost of compactness of the velocity field due to the presence of  acoustic waves. These are waves that oscillates at very high frequencies and are supported by the gradient part of the velocity.  Here, since we deal at the same time with the quasineutral limit, we also have  to consider the  high plasma oscillations, namely,   the presence of  stiff terms due to the electric field, whose oscillations could not  be in general  controlled only by the dispersion of the acoustic waves. Related to this last point is this second improvement of  \cite{donatelli2008quasineutral}.  In fact in \cite{donatelli2008quasineutral}, the analysis of the acoustic waves was performed by standard Strichartz estimates for the wave equation. In this paper, we obtain better dispersive estimates by utilising the \text{full} Klein--Gordon operator as opposed to just the  wave operator. Indeed, the  velocity field 
disperses and oscillates and the Klein--Gordon operator is able to capture this duality.  Moreover due to the stochastic setting  the acoustic equations  must be handled in a rather different manner, we have to  produce  new suitable dispersive estimate for first-order system of equations. As a result, we get that the dispersive behaviour dominates on the high frequency time oscillations, therefore the usual estimates of Strichartz-type are sufficient to pass to the limit in the velocity field and to control the electric field's  time oscillations (plasma oscillation). The third improvement of  \cite{donatelli2008quasineutral} is connected to this last part. Due to the difference in the analysis of the acoustic waves,  we obtain a better range of  admissible values of $\beta$ in \eqref{valuebeta} required for the convergence of the gradient part of the momentum and the velocity fields to zero and to perform the convergence analysis for the electric field. We will give further details of the above-mentioned techniques in the subsequent paragraphs where we describe the structure of this paper.
\\
In our preliminary section, Section \ref{sec:prelim}, we present some notations that will be used throughout this document and also state some analytic tools for deterministic PDEs. Since we have introduced a stochastic perturbation in \eqref{momEqB}, we also present in Section \ref{sec:prelim}, the assumptions on the noise coefficient which will ensure that we have a well-defined stochastic integral. Furthermore, since our ultimate goal is to explore the interplay between solutions of two sets of a system of equations, we also make precise, the notion of a solution to both systems. We finally end Section \ref{sec:prelim} with the statement of our main result, Theorem \ref{thm:wholespace}.
\\
After our preliminary section, the rest of our paper is devoted to the proof of our main result. We begin in Section \ref{sec:UniEst} by obtaining uniform a priori estimates via energy methods for the relevant sequence of functions. Among these will be the analysis of the momentum sequence which we decompose into its solenoidal part (or divergence-free part) and gradient part (or curl-free part) and treat these parts separately in Section \ref{sec:solePart} and Section \ref{sec:gradPart} respectively. The treatment of the gradient part of the momentum in Section \ref{sec:gradPart} will be the most important analysis in this paper. Here, our goal will be the derivation of dispersive estimates when we project the momentum equation \eqref{momEqB} onto weakly curl-free fields. This goal is achieved by tools from harmonic analysis and in particular, Strichartz-type estimates for Klein-Gordon equation; a second-order spacetime equation. 
Unfortunately, because we are dealing with a stochastic partial differential equation, the existing deterministic tools for treating this second-order equation breakdown. In particular, we can not replicate the analysis in \cite{donatelli2008quasineutral} in our setting since we can not make sense of a second-order SPDE. Therefore, we consider instead, a stochastic Klein-Gordon system of two first-order coupled equations, see \eqref{acousticSPD111LM}. By relying on the methods used in \cite{mensah2016existence, mensah2019theses} for the analysis of acoustic waves in the stochastic compressible Navier--Stokes system, we produce a similar dispersive estimate for our  first-order system of equations.   Since the treatment of the Klein-Gordon operator \eqref{KGoperator} is clearly more complicated than the wave operator treated in \cite{mensah2016existence, mensah2019theses}, obtaining dispersive estimate in our context is also more tricky. Nevertheless,  preliminary analyses into low and high frequencies regimes, see Proposition \ref{prop:expBound}, enables us to obtain our goal of showing that the gradient part of the momentum sequence vanishes in the limit as the Debye number goes to zero. We then end Section \ref{sec:UniEst} by showing in Section \ref{sec:gradVelo} that the corresponding gradient part of the velocity sequence also vanishes in the limit as the Debye number goes to zero.
\\
With all our crucial uniform estimates in hand, we proceed to show stochastic compactness of these uniformly bounded functions in Section \ref{subsec:compactness}. Since some of our functions live in spaces which are not Polish, our compactness arguments relies on the Jakubowski--Skorokhod representation  theorem \cite{jakubowski1998short} rather than the more conventional Prokhorov and Skorokhod theorems for functions in Polish spaces.
\\
To complete the proof of our main result, Theorem \ref{thm:wholespace}, we identify the incompressible stochastic Navier--Stokes equation as our limit system in Section \ref{sec:limitWholeSpace}. Here, we follow the original approach in \cite{breit2015incompressible, mensah2016existence} (and later polished in \cite{breit2018stoch}) to identify the limit system except for the treatment of the term due to the electric field. This is a purely deterministic argument and we thus follow the approach in \cite{donatelli2008quasineutral}. 
\\
A result  related to Theorem \ref{thm:wholespace} is the so called {\em zero electron mass number limit } result. Although being a different scaled Navier--Stokes--Poisson system in the strict physical sense of the word, formally speaking, this corresponds to setting $\beta=0$ in \eqref{valuebeta}--\eqref{elecFieldB} above and studying the limit as $\varepsilon \rightarrow 0$. We, therefore, present in the  Section \ref{appendix}, the exact scaled model and make rigorous, the formal notion of getting a limiting system when $\beta$ in \eqref{valuebeta}--\eqref{elecFieldB} is set to zero. This is given in Theorem \ref{thm:electronMass} and we obtain the same limit system (i.e., the stochastic incompressible Navier--Stokes equation) as we do for our combined incompressible quasineutral limit result, Theorem  \ref{thm:wholespace}. Indeed, the proof of this zero electron mass number limit result reduces to a corollary of the combined incompressible quasineutral limit result when one replaces the analysis of the \textit{singular} homogeneous Klein--Gordon equation \eqref{homogeAcoustic} and its solution \eqref{solution} with the analysis of its  \textit{non-singular} version as presented in Proposition \ref{nonsingularKG}. Setting $\beta=0$ everywhere else in the proof of Theorem \ref{thm:wholespace}, completes the proof of  Theorem \ref{thm:electronMass}.

\section{Preliminaries and Main Results}
\label{sec:prelim}
\subsection{Notations} The following notations will the used throughout this work.
\begin{enumerate}
\item For functions $F$ and $G$ and a variable $p$, we denote by $F \lesssim G$ and $F \lesssim_p G$, the existence of  a generic constant $c>0$ and another such constant $c(p)>0$ which now depends on $p$ such that $F \leq c\,G$ and $F \leq c(p) G$ respectively.
\item We write $K\Subset \mathbb{R}^3$ if $K \subseteq \mathbb{R}^3$ and $K$ is compact. 
\item Let $\Delta^{-1}$ be the inverse of the Laplacian on $\mathbb{R}^3$ and  we denote by $\mathcal{Q}:= \nabla \Delta^{-1} \mathrm{div}$ and $\mathcal{P}:= \mathbb{I} - \mathcal{Q}$, Helmoltz decomposition onto gradient and solenoidal vector fields respectively.
\item The homogeneous Sobolev space with differentiability $m\in \mathbb{R}$ and  integrability $q\geq1$ is denoted by $D^{m,q}(\mathbb{R}^3)$, see \cite{galdi2011introduction}, whereas
\begin{align*}
L^q_2(\mathbb{R}^3)=\big\{f\in L^1_{\mathrm{loc}}(\mathbb{R}^3) \, :\, \vert f \vert \chi_{\{2\vert f\vert\leq 1 \}}\in L^2(\mathbb{R}^3),\quad
\vert f \vert \chi_{\{2\vert f\vert> 1 \}}\in L^q(\mathbb{R}^3)
\big\}
\end{align*} 
is the Orlicz space of integrability $q\geq1$. The usual Lebesgue space of vectors which are weakly solenoidal is denoted by $L^q_{\mathrm{div}}(\mathbb{R}^3)$ with a similar notation for Sobolev spaces $W^{s,q}_{\mathrm{div}}(\mathbb{R}^3)$, $s\in \mathbb{R}$,  $D^{m,q}_{\mathrm{div}}(\mathbb{R}^3)$ and compactly supported smooth functions $C^\infty_{c,\mathrm{div}}(\mathbb{R}^3)$.
\end{enumerate}

\subsection{Some analytic results}
We collect in this section, some classical analytic results which are required in the sequel. The following lemma gives some estimates for mollified functions, see \cite{desjardins1999low, donatelli2008quasineutral}.
\begin{lemma}
For  $\kappa \in (0,1)$, let $\wp_\kappa$ be the usual mollifier. Then
\begin{enumerate}
\item For any $f \in D^{1,2}(\mathbb{R}^d)$, we have
\begin{align}
\label{mollifierA}
\Vert f - [f]_\kappa \Vert_{L^p(\mathbb{R}^d)}
\lesssim_p \kappa^{1-d\left( \frac{1}{2} -\frac{1}{p}\right)}
\Vert \nabla f \Vert_{L^2(\mathbb{R}^d)}
\end{align}
where $[f]_\kappa := f * \wp_\kappa$, $p\in [2, \infty)$ if $d=2$ and $p\in [2,6]$ if $d=3$.
\item Furthermore,
\begin{align}
\label{mollifierB}
\Vert [f]_\kappa \Vert_{L^p(\mathbb{R}^d)}
\lesssim_p \kappa^{-s-d\left( \frac{1}{r} -\frac{1}{p}\right)}
\Vert  f \Vert_{W^{-s,r}(\mathbb{R}^d)}
\end{align}
for any $p,r\in [1, \infty]$, $r\leq p$ and $s\geq 0$.
\end{enumerate}
\end{lemma}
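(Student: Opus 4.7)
Both estimates are classical facts about convolution with a Friedrichs mollifier, so I will argue them via Fourier analysis combined with the Hardy--Littlewood--Sobolev embeddings. The unifying principle is that the mollifier $\wp_\kappa(x) = \kappa^{-d}\wp_1(x/\kappa)$ obeys the natural scaling, so on the Fourier side $\widehat{\wp_\kappa}(\xi) = \widehat{\wp_1}(\kappa\xi)$ is a smooth cut-off at frequency $|\xi| \sim \kappa^{-1}$, and the two estimates correspond respectively to quantifying the removed high-frequency mass and the growth of the retained mass.

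For part (1), I would first establish the $p=2$ endpoint by Plancherel. Writing $\widehat{f-[f]_\kappa}(\xi) = \bigl(1-\widehat{\wp_1}(\kappa\xi)\bigr)\widehat{f}(\xi)$ and using $\widehat{\wp_1}(0)=1$ together with smoothness of $\widehat{\wp_1}$ to get $|1-\widehat{\wp_1}(\kappa\xi)| \lesssim \kappa|\xi|$, Plancherel then yields
\begin{equation*}
\Vert f-[f]_\kappa\Vert_{L^2(\mathbb{R}^d)} \lesssim \kappa\,\Vert\nabla f\Vert_{L^2(\mathbb{R}^d)}.
\end{equation*}
For the other endpoint I would use the Sobolev embedding $D^{1,2}(\mathbb{R}^d)\hookrightarrow L^{2^\ast}(\mathbb{R}^d)$ applied to the difference, giving $\Vert f-[f]_\kappa\Vert_{L^{2^\ast}} \lesssim \Vert \nabla f\Vert_{L^2}$ with $2^\ast=6$ when $d=3$. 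The desired range $p\in[2,6]$ (and $p\in[2,\infty)$ when $d=2$, via a Gagliardo--Nirenberg substitute) then follows by log-convex interpolation of $L^p$ norms, the bookkeeping being precisely $\kappa^{(1-\theta)\cdot 1}$ with $\theta = d(1/2-1/p)/1$, which gives the exponent $1-d(1/2-1/p)$ claimed in \eqref{mollifierA}.

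For part (2), I would reduce to Young's convolution inequality after pulling a fractional derivative from the data onto the kernel. Writing $[f]_\kappa = (\Lambda^{-s}f)\ast(\Lambda^s \wp_\kappa)$ with $\Lambda = (1-\Delta)^{1/2}$ and choosing $b\in[1,\infty]$ by the balance $1+\tfrac{1}{p} = \tfrac{1}{r}+\tfrac{1}{b}$, Young gives
\begin{equation*}
\Vert [f]_\kappa\Vert_{L^p(\mathbb{R}^d)} \leq \Vert f\Vert_{W^{-s,r}(\mathbb{R}^d)}\,\Vert\Lambda^s\wp_\kappa\Vert_{L^b(\mathbb{R}^d)}.
\end{equation*}
It then suffices to verify the kernel bound $\Vert\Lambda^s\wp_\kappa\Vert_{L^b} \lesssim \kappa^{-s-d(1/r-1/p)}$. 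I would do this by a rescaling computation in the Fourier integral for $\Lambda^s\wp_\kappa$: substituting $\eta=\kappa\xi$ extracts a factor $\kappa^{-d-s}$, and since the resulting profile is a fixed Schwartz function, its $L^b$-norm on the scaled variable $x/\kappa$ contributes $\kappa^{d/b}$. Combining gives $\kappa^{-s-d/b'}$, and the Young exponent relation $1/b' = 1/r - 1/p$ yields exactly \eqref{mollifierB}.

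The only delicate point I anticipate is the kernel estimate $\Vert \Lambda^s \wp_\kappa \Vert_{L^b}$ for nonzero $s$, since $\Lambda^s$ is not homogeneous and its Fourier multiplier $(1+|\xi|^2)^{s/2}$ mixes two regimes. The clean way around this is to replace $(1+|\xi|^2)^{s/2}$ by the homogeneous multiplier $|\xi|^s$, which is the dominant contribution once $\kappa$ is small, and to absorb the lower-order correction into the constant; alternatively one factors $\Lambda^s = (I-\Delta)^{s/2}$ as a composition whose kernels are Schwartz and tracks scaling term by term. Everything else is routine bookkeeping.
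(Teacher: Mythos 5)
The paper does not prove this lemma; it is quoted as a classical fact with references to \cite{desjardins1999low, donatelli2008quasineutral}, and your argument is essentially the standard one found there. For part (1) your three steps (Plancherel with $|1-\widehat{\wp_1}(\kappa\xi)|\lesssim\kappa|\xi|$ at $p=2$, the embedding $D^{1,2}\hookrightarrow L^{2^\ast}$ at the upper endpoint, and $L^p$-interpolation, with Gagliardo--Nirenberg replacing the missing endpoint when $d=2$) are correct, and the exponent bookkeeping $\theta=d(\tfrac12-\tfrac1p)$ checks out. For part (2) the Young-inequality skeleton with $1+\tfrac1p=\tfrac1r+\tfrac1b$ is the right reduction and the scaling of the kernel is exact for the homogeneous part.

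Two technical points deserve attention in part (2). First, as you anticipate, $\Lambda^s=(1-\Delta)^{s/2}$ does not scale, so $\Vert\Lambda^s\wp_\kappa\Vert_{L^b}\lesssim\kappa^{-s-d(1-1/b)}$ needs a separate verification; your proposed work-arounds are fine for $\kappa\in(0,1)$ since the inhomogeneous symbol is dominated by $\kappa^{-s}$ times a fixed profile after the substitution $\eta=\kappa\xi$. Second, and not flagged in your plan, the identification $\Vert\Lambda^{-s}f\Vert_{L^r}\simeq\Vert f\Vert_{W^{-s,r}}$ is a Calder\'on--Zygmund fact that fails at $r=1$ and $r=\infty$, yet the paper applies \eqref{mollifierB} with $r=1$ (for $\mathbb{F}^\varepsilon_1$, $F^\varepsilon_1$ and the noise coefficients). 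The clean fix, which also disposes of the fractional-$s$ issue for the integer values actually used, is to bypass $\Lambda^{\pm s}$ entirely: write $f=\sum_{|\alpha|\le s}\partial^\alpha g_\alpha$ with $\sum_\alpha\Vert g_\alpha\Vert_{L^r}\lesssim\Vert f\Vert_{W^{-s,r}}$, move the derivatives onto the mollifier so that $[f]_\kappa=\sum_\alpha g_\alpha\ast\partial^\alpha\wp_\kappa$, and use the exact scaling identity $\Vert\partial^\alpha\wp_\kappa\Vert_{L^b}=\kappa^{-|\alpha|-d(1-1/b)}\Vert\partial^\alpha\wp_1\Vert_{L^b}$ together with Young. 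With that adjustment your proof is complete.
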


\subsection{Assumptions on the stochastic force}
\label{sec:noiseAssumption}
Let $\big(\Omega, \mathscr{F}, (\mathscr{F}_t)_{t\geq 0}, \mathbb{P} \big)$ be a stochastic basis endowed with  a right-continuous filtration $(\mathscr{F}_t)_{t\geq 0}$. Let $W(t)$ be an $(\mathscr{F}_t)$-cylindrical Wiener process satisfying $W(t) = \sum_{k\in \mathbb{N}} \beta_k(t) e_k$ for $t\in [0,T]$
where $(\beta_k)_{k\in \mathbb{N}}$ is a family of mutually independent Brownian motions and $(e_k)_{k\in \mathbb{N}}$ are orthonormal basis of a separable Hilbert space $\mathfrak{U}$. Now consider the larger space $\mathfrak{U}_0 \supset  \mathfrak{U}$ for which the  embedding $\mathfrak{U}\hookrightarrow \mathfrak{U}_0$ is Hilbert--Schmidt and for which $W$ has $\mathbb{P}$-a.s. $C([0,T];\mathfrak{U}_0)$ sample paths. To ensure that the stochastic integral $\int_0^\cdot\mathbf{G}(\varrho,\varrho \mathbf{u})\mathrm{d}W$ is a well-defined $(\mathscr{F}_t)$-martingale taking value in  a suitable Hilbert space, we set $\mathbf{m}=\varrho \mathbf{u}$ where $0\leq \varrho \in L^\gamma(\mathbb{R}^3)$ and $\mathbf{u} \in L^2(\mathbb{R}^3)$ and let $\mathbf{G}(\varrho,  \mathbf{m}):\mathfrak{U}\rightarrow    L^1(  K  )$ for $K\Subset \mathbb{R}^3$ be defined as follows:
\\
Assume there exist $C^1_c$-functions  $ \mathbf{g}_k: \mathbb{R}^3\times \mathbb{R}_{\geq0}  \times \mathbb{R}^3  \rightarrow \mathbb{R}^3$   such that
\begin{align}
&\mathbf{G} (\varrho,  \mathbf{m}) e_k   =   \mathbf{g}_k(\cdot, \varrho(\cdot),  \mathbf{m} (\cdot)), \label{noiseAssumptionWholespace1}
\\
&\sum_{k\in \mathbb{N}}\vert  \mathbf{g}_k(x, \varrho,  \mathbf{m})   \vert^2  \lesssim  \varrho^2 + \vert \mathbf{m}\vert^2 ,
\quad
\sum_{k\in \mathbb{N}}\big\vert\nabla_{\varrho,\mathbf{m}} \, \mathbf{g}_k(x, \varrho, \mathbf{m}) \big) \big\vert^2   \lesssim 1. \label{noiseAssumptionWholespace2}
\end{align}
\subsection{Concepts of solution}
\label{sec:solution}
We now define the notions of  solution that we wish to consider. A reader may refer to \cite{breit2018stoch} for any terminology that they are not familiar with.
\begin{definition}
\label{def:martSolutionExis}
Let $\Lambda$ be a Borel probability measure on $L^1(\mathbb{R}^3)\times L^1(\mathbb{R}^3)$. For $\varepsilon>0$ fixed, we say that 
$\big[(\Omega^\varepsilon ,\mathscr{F}^\varepsilon ,(\mathscr{F}^\varepsilon _t)_{t\geq0},\mathbb{P}^\varepsilon );\varrho^\varepsilon , \mathbf{u}^\varepsilon , V^\varepsilon , W^\varepsilon   \big]$
is a \textit{finite energy weak martingale solution} of \eqref{contEqB}--\eqref{elecFieldB} with initial law $\Lambda$ provided:
\begin{enumerate}
\item $(\Omega^\varepsilon,\mathscr{F}^\varepsilon,(\mathscr{F}^\varepsilon_t),\mathbb{P}^\varepsilon)$ is a stochastic basis with a complete right-continuous filtration and $W^\varepsilon$ is a $(\mathscr{F}^\varepsilon_t)$-cylindrical Wiener process;
\item the triplets $(\varrho^\varepsilon,\mathbf{u}^\varepsilon,V^\varepsilon) $ are  $(\mathscr{F}^\varepsilon_t)$-adapted random distributions;
\item there exists $\mathscr{F}^\varepsilon_0$-measurable random variables $(\varrho^\varepsilon_0, \varrho^\varepsilon_0\mathbf{u}^\varepsilon_0)$ such that $\Lambda = \mathbb{P}\circ (\varrho^\varepsilon_0, \varrho^\varepsilon_0 \mathbf{u}^\varepsilon_0)^{-1}$;
\item for all $\psi \in C^\infty_c (\mathbb{R}^3)$ and $\bm{\phi} \in C^\infty_c (\mathbb{R}^3)$ and all $t\in [0,T]$, the following
\begin{equation}
\begin{aligned}
\label{eq:distributionalSol}
 \int_{\mathbb{R}^3}&\varrho^\varepsilon  \psi \,\mathrm{d}x   =   \int_{\mathbb{R}^3} \varrho^\varepsilon _0 \psi \mathrm{d}x  +  \int_0^t  \int_{\mathbb{R}^3} \varrho^\varepsilon  \mathbf{u}^\varepsilon  \cdot\nabla \psi \,\mathrm{d}x  \mathrm{d}s,
\\
 \int_{\mathbb{R}^3}&\varrho^\varepsilon  \mathbf{u}^\varepsilon \cdot \bm{\phi} \,\mathrm{d}x   =   \int_{\mathbb{R}^3} \varrho ^\varepsilon_0 \mathbf{u}^\varepsilon _0 \cdot \bm{\phi} \,\mathrm{d}x  +  \int_0^t  \int_{\mathbb{R}^3} \varrho^\varepsilon  \mathbf{u}^\varepsilon \otimes \mathbf{u}^\varepsilon : \nabla \bm{\phi}\,\mathrm{d}x  \mathrm{d}s 
\\&- {\nu}_1\int_0^t  \int_{\mathbb{R}^3} \nabla \mathbf{u}^\varepsilon : \nabla \bm{\phi} \,\mathrm{d}x  \mathrm{d}s 
-
({\nu}_2+{\nu}_1 )\int_0^t  \int_{\mathbb{R}^3} \mathrm{div}\,\mathbf{u}^\varepsilon \, \mathrm{div}\, \bm{\phi}\,\mathrm{d}x  \mathrm{d}s    
\\&+  \frac{1}{\varepsilon^2}\int_0^t  \int_{\mathbb{R}^3} (\varrho^\varepsilon )^\gamma \mathrm{div} \, \bm{\phi} \,\mathrm{d}x  \mathrm{d}s
+
\frac{1}{\varepsilon^2}\int_0^t  \int_{\mathbb{R}^3} \varrho^\varepsilon  \nabla V^\varepsilon  
\cdot
\bm{\phi}\,\mathrm{d}x    \mathrm{d}s
\\&+
\int_0^t  \int_{\mathbb{R}^3} \mathbf{G}(\varrho^\varepsilon ,  \mathbf{m}^\varepsilon ) \cdot \bm{\phi} \,\mathrm{d}x\, \mathrm{d}W^\varepsilon 
\end{aligned}
\end{equation}
hold $\mathbb{P}^\varepsilon$-a.s.
\item the  energy inequality 
\begin{equation}
\begin{aligned}
\label{augmentedEnergy}
\int_{\mathbb{R}^3}\bigg[ &\frac{1}{2} \varrho^\varepsilon  \vert \mathbf{u}^\varepsilon  \vert^2  
+\frac{1}{\varepsilon^2}H(\varrho^\varepsilon,  1) + \varepsilon^{\beta-2} \big\vert \nabla V^\varepsilon  \big\vert^2
\bigg](t)\,\mathrm{d}x
+{\nu}_1
\int_0^t \int_{\mathbb{R}^3}\vert  \nabla \mathbf{u}^\varepsilon  \vert^2 \,\mathrm{d}x  \,\mathrm{d}s
\\
&+({\nu}_2+{\nu}_1)
\int_0^t \int_{\mathbb{R}^3}\vert  \mathrm{div}\, \mathbf{u}^\varepsilon  \vert^2 \,\mathrm{d}x  \,\mathrm{d}s
\leq
 \int_{\mathbb{R}^3}\bigg[\frac{1}{2} \varrho^\varepsilon _0\vert \mathbf{u} ^\varepsilon_0 \vert^2  
+\frac{1}{\varepsilon^2} H(\varrho^\varepsilon _0,   1) + \varepsilon^{\beta-2} \big\vert \nabla V^\varepsilon_0  \big\vert^2 \bigg]\,\mathrm{d}x
\\&
+\frac{1}{2}\int_0^t
 \int_{\mathbb{R}^3}
 (\varrho^\varepsilon)^{-1}
 \sum_{k\in\mathbb{N}} \big\vert
 \mathbf{g}_k(x,\varrho^\varepsilon ,\mathbf{m} ^\varepsilon ) 
\big\vert^2\,\mathrm{d}x\,\mathrm{d}s
+\int_0^t  \int_{\mathbb{R}^3}  \mathbf{u}^\varepsilon \cdot\mathbf{G}(\varrho^\varepsilon , \mathbf{m}^\varepsilon )\,\mathrm{d}x\,\mathrm{d}W^\varepsilon ;
\end{aligned}
\end{equation}
holds $\mathbb{P}$-a.s. for a.e. $t\in[0,T]$ where
\begin{equation} 
\begin{aligned}
\label{HVarrhoOverline}
H(\varrho^\varepsilon , 1) 
= \frac{1}{(\gamma -1)}\big[(\varrho^\varepsilon) ^\gamma    -  \gamma (\varrho^\varepsilon  - 1)   - 1 \big].
\end{aligned}
\end{equation}
\end{enumerate}
\end{definition} 
\begin{remark}
At this point, we draw the reader's attention to the differences in the noise terms in \eqref{eq:distributionalSol} and \eqref{augmentedEnergy} as opposed to their counterpart in \cite{donatelli2020dissipative}. This is because we have chosen a more general noise in \eqref{momEqB} compared to the version in \cite{donatelli2020dissipative} which is a multiple of the fluid's density. Beside  the current version being more general, it also enjoys better estimates especially when the density has low regularity. 
\end{remark}
\subsection{The limit system}
We now give the precise definitions of a solution to the anticipated limit system
\begin{equation}
\begin{aligned}
\label{incomprSPDE}
&\mathrm{div}( \mathbf{U}) = 0, 
\\
&\mathrm{d}( \mathbf{U}) + \big[\mathrm{div}( \mathbf{U}\otimes \mathbf{U}) - \nu_1\Delta \mathbf{U}  +\nabla \pi \big]\mathrm{d}t = \mathbf{G}(1, \mathbf{U})\mathrm{d}W
\end{aligned}
\end{equation}
where $\pi$ is the limit pressure.
\begin{definition}
\label{def:martSolutionIncompre}
If $\Lambda$ \ \ is a Borel probability measure on \ \ $L^2_{\mathrm{div}}(\mathbb{R}^3)$, then
we say that \\
$[(\Omega,\mathscr{F},(\mathscr{F}_t),\mathbb{P}), \mathbf{U}, W ]$ is a \textit{weak martingale solution} of \eqref{incomprSPDE} with initial law $\Lambda$ provided:
\begin{enumerate}
\item $(\Omega,\mathscr{F},(\mathscr{F}_t),\mathbb{P})$ is a stochastic basis with a complete right-continuous filtration and  $W$ is a $(\mathscr{F}_t)$-cylindrical Wiener process;
\item $\mathbf{U}$ is $(\mathscr{F}_t)$-adapted and $\mathbf{U}\in   C_w\left( [0,T];L^2_{\mathrm{div}}(\mathbb{R}^3) \right) \cap L^2(0,T;W^{1,2}_{\mathrm{div}}(\mathbb{R}^3)  )$ $ \mathbb{P}$-a.s.;
\item there exists $\mathscr{F}_0$-measurable random variable $\mathbf{U}_0$ such that $\Lambda = \mathbb{P}\circ (\mathbf{U}_0)^{-1}$,
\item for all $\bm{\phi} \in C^\infty_{c,\mathrm{div}} (\mathbb{R}^3)$  and for all $t\in [0,T]$, the following
\begin{align*}
\int_{\mathbb{R}^3}  \mathbf{U}(t)\cdot \bm{\phi} \, \mathrm{d}x
= 
\int_{\mathbb{R}^3}  \mathbf{U}(0)\cdot \bm{\phi}\, \mathrm{d}x\, \mathrm{d}s   &+  \int_0^t \int_{\mathbb{R}^3} \big[  \mathbf{U}\otimes \mathbf{U}   - \nu_1 \nabla \mathbf{U} 
\big]:\nabla \bm{\phi}\, \mathrm{d}x\, \mathrm{d}s 
\\&+
\int_0^t \int_{\mathbb{R}^3} \mathbf{G}(1, \mathbf{U}) \cdot \bm{\phi} \, \mathrm{d}x\, \mathrm{d}W(s)
\end{align*}
holds $\mathbb{P}$-a.s.
\end{enumerate}
\end{definition}
The existence of weak martingale solution of \eqref{incomprSPDE} in the sense of Definition \ref{def:martSolutionIncompre} above  has been shown  in \cite{capinski1994stochastic, flandoli1995martingale} for bounded domain. For the case of the whole space, refer to \cite{mikulevicius2005global, mensah2016existence}.
\subsection{Main Results}
We now state our main result. It establishes the convergence, as $\varepsilon\to 0$, of any family of solutions to  \eqref{contEqB}--\eqref{elecFieldB} in the sense of Definition \ref{def:martSolutionExis} to a solution of \eqref{incomprSPDE}  in the sense of  Definition \ref{def:martSolutionIncompre}.
\begin{theorem}
\label{thm:wholespace}
Let  $\Lambda$ be a given Borel probability measure on $L^2_{\mathrm{div}}(\mathbb{R}^3)$. For $\varepsilon >0$ and  $\gamma>3/2$, we let $\Lambda^\varepsilon$ be a family of Borel probability measures on $\big[ L^1_x \big]^2= L^1(\mathbb{R}^3) \times L^1(\mathbb{R}^3)$ such that
\begin{equation}
\begin{aligned}
\label{intialLawepsilon}
\Lambda^\varepsilon  \Big\{ (\varrho, \mathbf{m})  \, : \,   
 \vert \varrho-1 \vert\leq  \varepsilon M  \Big\}=1
\end{aligned}
\end{equation}
holds for a deterministic constant $M>0$ which is independent of $\varepsilon>0$. For all $p\in[1,\infty)$, we assume that the following moment estimate
\begin{equation}
\begin{aligned}
\label{initialLaw}
\int_{[ L^1_x]^2}  \left\Vert \frac{1}{2}\frac{\vert\mathbf{m}\vert^2}{\varrho}  +  \frac{1}{\varepsilon^2} H(\varrho,1) 
+
\varepsilon^{\beta-2} \vert \nabla V \vert^2 \right\Vert^p_{L^1_x}   \mathrm{d}\Lambda^\varepsilon(\varrho,  \mathbf{m}) \lesssim_{p} 1,
\end{aligned}
\end{equation}
holds uniformly in $\varepsilon$.
Further assume that \eqref{noiseAssumptionWholespace1}-- \eqref{noiseAssumptionWholespace2} holds and that  the marginal law of $\Lambda^\varepsilon$ corresponding to the second component converges to $\Lambda$ weakly in the sense of measures on $L^{\frac{2\gamma}{\gamma+1}}(\mathbb{R}^3 )$. If
\begin{align}
\label{familyweakMartSol}
\left[(\Omega^\varepsilon,\mathscr{F}^\varepsilon,(\mathscr{F}^\varepsilon_t),\mathbb{P}^\varepsilon);\varrho^\varepsilon, \mathbf{u}^\varepsilon, V^\varepsilon, W^\varepsilon  \right]
\end{align}
is a finite energy weak martingale solution of \eqref{contEqB}--\eqref{elecFieldB} in the sense of Definition \ref{def:martSolutionExis} with initial law $\Lambda^\varepsilon$
and for any $\delta>0$,
\begin{align}
0<\beta <\frac{1}{2+\delta},
\end{align}
then
\begin{align*}
\varrho^\varepsilon \rightarrow 1\quad&\text{in law in}\quad L^\infty(0,T;L^{\gamma}_{\mathrm{loc}} (\mathbb{R}^3 )),
\\
 \mathbf{u}^\varepsilon \rightarrow  \mathbf{U}\quad& \text{in law in}\quad 
  \big(L^2(0,T;W^{1,2}(\mathbb{R}^3 )),w \big)
\end{align*}
as $\varepsilon \rightarrow0$ and  $\mathbf{U}$ is a weak martingale solution of \eqref{incomprSPDE}  in the sense of  Definition \ref{def:martSolutionIncompre} with  initial law $\Lambda$.
\end{theorem}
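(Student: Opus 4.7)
My plan has three stages: uniform $\varepsilon$-independent bounds together with Helmholtz decomposition and compactness of the solenoidal momentum; dispersive elimination of the gradient (acoustic--plasma) part; and stochastic compactness with limit identification.

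Stage one: taking the $p$-th moment of the energy inequality \eqref{augmentedEnergy} and applying the Burkholder--Davis--Gundy inequality against the noise bounds \eqref{noiseAssumptionWholespace1}--\eqref{noiseAssumptionWholespace2} gives uniform-in-$\varepsilon$ control of $\sqrt{\varrho^\varepsilon}\mathbf{u}^\varepsilon$, of $\varepsilon^{-1}\sqrt{H(\varrho^\varepsilon,1)}$, and of $\varepsilon^{(\beta-2)/2}\nabla V^\varepsilon$ in $L^p_\omega L^\infty_t L^2_x$, and of $\nabla\mathbf{u}^\varepsilon$ in $L^p_\omega L^2_t L^2_x$. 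Since $H(\varrho,1)$ is quadratic near $\varrho=1$ and grows like $\varrho^\gamma$ at infinity, the scaled density fluctuation $\sigma^\varepsilon := \varepsilon^{-1}(\varrho^\varepsilon-1)$ is bounded in the Orlicz space $L^\gamma_2(\mathbb{R}^3)$, while \eqref{intialLawepsilon} (propagated in time) provides the pointwise smallness $|\varrho^\varepsilon-1| \lesssim \varepsilon$. I would then split $\mathbf{m}^\varepsilon = \varrho^\varepsilon\mathbf{u}^\varepsilon = \mathcal{P}\mathbf{m}^\varepsilon + \mathcal{Q}\mathbf{m}^\varepsilon$. Applying $\mathcal{P}$ to \eqref{momEqB} kills the pressure gradient and, up to an $O(\varepsilon)$ commutator thanks to the density smallness, the electric force; the resulting equation for $\mathcal{P}\mathbf{m}^\varepsilon$ has uniform fractional-in-time H\"older regularity in some $W^{-k,2}_{\mathrm{loc}}$, Kolmogorov's continuity criterion handling the stochastic integral, which yields, by an Aubin--Lions argument, strong compactness of $\mathcal{P}\mathbf{m}^\varepsilon$ in $L^2(0,T;L^2_{\mathrm{loc}}(\mathbb{R}^3))$ almost surely.

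Stage two is the heart of the argument. Combining the continuity equation with the Poisson relation, $(\sigma^\varepsilon, \mathcal{Q}\mathbf{m}^\varepsilon)$ satisfies, at leading order, a first-order stochastic Klein--Gordon type system driven by viscous, convective, and stochastic residuals, with Fourier symbol eigenvalues $\pm i\varepsilon^{-1}\sqrt{\gamma|\xi|^2 + \varepsilon^{-\beta}}$; this simultaneously encodes acoustic dispersion at scale $\varepsilon^{-1}$ and plasma oscillations at scale $\varepsilon^{-1-\beta/2}$. Since the second-order scalar reformulation used in \cite{donatelli2008quasineutral} cannot accommodate the It\^o term, I would work directly with the first-order system and derive Strichartz-type estimates for its Duhamel semigroup, following the approach of \cite{mensah2016existence, mensah2019theses} but now with the Klein--Gordon (rather than wave) symbol. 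A Littlewood--Paley split at $|\xi|\sim \varepsilon^{-\beta/2}$ separates the low-frequency (massive) regime, where the propagator is essentially a modulated transport whose amplitude is controlled by the uniform bound on $\nabla V^\varepsilon$, from the high-frequency regime, where the symbol approaches that of the wave equation and classical Strichartz bounds apply. Interpolation of the two regimes against the uniform energy estimates is precisely what forces the constraint $\beta < 1/(2+\delta)$; it delivers $\mathcal{Q}\mathbf{m}^\varepsilon \to 0$ in some $L^q(0,T;L^r_{\mathrm{loc}}(\mathbb{R}^3))$ in expectation, and the density smallness then upgrades this to $\mathcal{Q}\mathbf{u}^\varepsilon \to 0$ in the same sense.

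Stage three: tightness of the joint laws of $(\varrho^\varepsilon, \mathbf{u}^\varepsilon, V^\varepsilon, W^\varepsilon)$ in a product of weak-topology path spaces follows from the previous bounds, and since these spaces are not Polish I would invoke the Jakubowski--Skorokhod representation theorem to obtain almost surely convergent copies on a new stochastic basis. On this basis, $\varrho^\varepsilon \to 1$ strongly in $L^\infty(0,T;L^\gamma_{\mathrm{loc}}(\mathbb{R}^3))$, $\mathbf{u}^\varepsilon \rightharpoonup \mathbf{U}$ with $\mathrm{div}\,\mathbf{U}=0$, and $\mathcal{Q}\mathbf{u}^\varepsilon \to 0$. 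The convective term passes to the limit because $\mathcal{P}\mathbf{m}^\varepsilon$ is strongly compact while $\mathcal{Q}\mathbf{m}^\varepsilon$ vanishes. The singular Lorentz-type force $\varepsilon^{-2}\varrho^\varepsilon\nabla V^\varepsilon$ is handled deterministically as in \cite{donatelli2008quasineutral}, by decomposing $\varrho^\varepsilon\nabla V^\varepsilon = \nabla V^\varepsilon + (\varrho^\varepsilon-1)\nabla V^\varepsilon$: the first summand is a pure gradient that vanishes against divergence-free test fields, while the second is negligible by $\|\varrho^\varepsilon-1\|_\infty \lesssim \varepsilon$ combined with the dispersive control of $\nabla V^\varepsilon$. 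Identification of the stochastic integral in the limit follows the martingale-characterization argument of \cite{breit2018stoch, mensah2016existence, breit2015incompressible}. The principal anticipated obstacle is stage two: producing the first-order stochastic Klein--Gordon Strichartz estimates and balancing the low and high-frequency regimes finely enough to extract the sharp admissible range of $\beta$.
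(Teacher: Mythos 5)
Your architecture matches the paper's (energy estimates, Helmholtz splitting with H\"older-in-time compactness of $\mathcal{P}\mathbf{m}^\varepsilon$, a first-order stochastic Klein--Gordon reformulation of the acoustic--plasma waves, Jakubowski--Skorokhod, martingale identification), but there are two genuine gaps. The first is your repeated use of the pointwise smallness $\vert\varrho^\varepsilon-1\vert\lesssim\varepsilon$ ``propagated in time'' from \eqref{intialLawepsilon}. This does not propagate: for finite energy weak solutions the only available control on $\sigma^\varepsilon=\varepsilon^{-1}(\varrho^\varepsilon-1)$ for $t>0$ is the Orlicz bound \eqref{denFlucEst} coming from the convexity of $H(\cdot,1)$, i.e.\ $L^2$ on the set where the fluctuation is small and $L^{\gamma}$ where it is large, and nothing in $L^\infty_x$. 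This breaks your treatment of the singular force: writing $\varepsilon^{-2}(\varrho^\varepsilon-1)\nabla V^\varepsilon=\varepsilon^{-1}\sigma^\varepsilon\nabla V^\varepsilon$ and using only the energy bound $\Vert\nabla V^\varepsilon\Vert_{L^2_x}\lesssim\varepsilon^{(2-\beta)/2}$ together with $\sigma^\varepsilon=O(1)$ in $L^{\overline\gamma}_x$ leaves a term of size $\varepsilon^{-\beta/2}$, which diverges. The paper closes this by a $\gamma$-dependent case analysis ($\frac32<\gamma<2$ via $\Vert\varrho^\varepsilon-1\Vert_{L^\gamma}\lesssim\varepsilon^\gamma$ and H\"older; $\gamma\ge2$ via the additional decay $\Vert\tilde\sigma^\varepsilon\Vert_{L^2_{t,x}}\lesssim\varepsilon^{1-(2+\delta)\beta}$ extracted from the acoustic estimate \eqref{schrit5} together with elliptic bounds on $\nabla\tilde V^\varepsilon,\Delta\tilde V^\varepsilon$), and the same caveat applies to your claim that $\mathcal{P}$ kills the electric force ``up to an $O(\varepsilon)$ commutator'': in fact $\mathcal{P}(\varepsilon^{-2}\varrho^\varepsilon\nabla V^\varepsilon)=\varepsilon^{\beta-2}\mathcal{P}\,\mathrm{div}(\nabla V^\varepsilon\otimes\nabla V^\varepsilon)$, which is only $O(1)$ in $L^1_x$ by the energy, not $O(\varepsilon)$ (this is still enough for the H\"older estimate, but for the reason the paper gives, not yours).

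The second gap is that your stage two, which you yourself flag as the principal obstacle, is left as a plan rather than a proof, and the plan is not the mechanism that actually produces the admissible range of $\beta$. The paper does not prove genuine Strichartz ($L^p_x$-decay) estimates for the Klein--Gordon propagator with a Littlewood--Paley split at $\vert\xi\vert\sim\varepsilon^{-\beta/2}$; Proposition \ref{prop:expBound} is an elementary Plancherel multiplier bound (frequencies split at $\vert\xi\vert=1$) costing a factor $\varepsilon^{-3\beta}$, and the smallness comes from the time rescaling $t\mapsto t/\varepsilon^{\beta+1}$ of the semigroup in the mild formulation, yielding $\varepsilon^{1-2\beta}$. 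The constraint $\beta<\frac{1}{2+\delta}$ then arises not from ``interpolation of the two frequency regimes'' but from the mollification loss: the quadratic forcing terms $\mathbb{F}^\varepsilon_1,F^\varepsilon_1$ are only bounded in $L^1_x$, so regularising at scale $\kappa$ costs $\kappa^{-5/2}$ by \eqref{mollifierB}, and choosing $\kappa=\varepsilon^{2\delta\beta/5}$ gives the final rate $\varepsilon^{1-(2+\delta)\beta}$. If you pursue your harder Strichartz route you must still explain how to absorb these merely integrable source terms and how the stochastic convolution is estimated (the paper uses the It\^o isometry on the mollified noise with the same $\kappa^{-5/2}$ loss); without that, the claimed sharp range of $\beta$ is not substantiated.
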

\begin{remark}
Notice that  the conditions \eqref{intialLawepsilon} and \eqref{initialLaw} correspond to the ill prepared initial data setting. These are the minimum requirements that we can impose on initial data so that the solutions of the Navier--Stokes--Poisson system are uniformly bounded as $\varepsilon\to 0$.
\end{remark}
\section{Uniform estimates}
\label{sec:UniEst}
%
Without loss of generality, 
see \cite{jakubowski1998short}, it suffices to consider the following family
\begin{align}
\big[(\Omega ,\mathscr{F} ,(\mathscr{F} _t)_{t\geq0},\mathbb{P} );\varrho^\varepsilon , \mathbf{u}^\varepsilon , V^\varepsilon , W   \big]
\end{align}
instead of \eqref{familyweakMartSol}, see also \cite[Remark 4.0.4]{breit2018stoch} and  \cite[Remark 6.2.15]{mensah2019theses}.
So by \eqref{augmentedEnergy}, we have that
\begin{equation}
\begin{aligned}
\label{energy1}
\int_{\mathbb{R}^3}\bigg[ &\frac{1}{2} \varrho^\varepsilon  \vert \mathbf{u}^\varepsilon  \vert^2  
+   \frac{1}{\varepsilon^2} H(\varrho^\varepsilon ,  1) + \varepsilon^{\beta-2} \big\vert \nabla V^\varepsilon  \big\vert^2
\bigg](t)\,\mathrm{d}x
+\nu_1
\int_0^t \int_{\mathbb{R}^3}\vert  \nabla \mathbf{u}^\varepsilon  \vert^2 \,\mathrm{d}x  \,\mathrm{d}s
\\
&+(\nu_2+\nu_1)
\int_0^t \int_{\mathbb{R}^3}\vert  \mathrm{div}\, \mathbf{u}^\varepsilon  \vert^2 \,\mathrm{d}x  \,\mathrm{d}s
\leq
 \int_{\mathbb{R}^3}\bigg[\frac{1}{2} \varrho^\varepsilon _0\vert \mathbf{u}^\varepsilon _0 \vert^2  
+  \frac{1}{\varepsilon^2} H(\varrho^\varepsilon _0,   1) + \varepsilon^{\beta-2} \big\vert \nabla V^\varepsilon_0  \big\vert^2 \bigg]\,\mathrm{d}x
\\&
+\frac{1}{2}\int_0^t
 \int_{\mathbb{R}^3}
(\varrho^\varepsilon)^{-1}
 \sum_{k\in\mathbb{N}} \big\vert
 \mathbf{g}_k(x,\varrho^\varepsilon ,\mathbf{m}^\varepsilon  ) 
\big\vert^2\,\mathrm{d}x\,\mathrm{d}s
+\int_0^t  \int_{\mathbb{R}^3}   \mathbf{u}^\varepsilon \cdot\mathbf{G}(\varrho^\varepsilon , \mathbf{m}^\varepsilon )\,\mathrm{d}x\,\mathrm{d}W ;
\end{aligned}
\end{equation}
holds $\mathbb{P}$-a.s. for a.e. $t\in[0,T]$. By \eqref{noiseAssumptionWholespace2}, we obtain
\begin{equation}
\begin{aligned}
\label{seqNois1}
\mathbb{E} &\sup_{t\in [0,T]} \bigg\vert \int_0^t
 \int_{\mathbb{R}^3}
\frac{1}{\varrho^\varepsilon}
 \sum_{k\in\mathbb{N}} \big\vert
 \mathbf{g}_k(x,\varrho^\varepsilon ,\mathbf{m}^\varepsilon  ) 
\big\vert^2\mathrm{d}x\mathrm{d}s \bigg\vert^p
\lesssim
\mathbb{E}\bigg[ \int_0^T
 \int_{K}
\frac{1}{\varrho^\varepsilon}
 \sum_{k\in\mathbb{N}} \big\vert
 \mathbf{g}_k(x,\varrho^\varepsilon ,\mathbf{m}^\varepsilon  ) 
\big\vert^2\mathrm{d}x\mathrm{d}s \bigg]^p
\\&
\lesssim
\mathbb{E}\bigg[ \int_0^T \int_{K}\frac{1}{\varrho^\varepsilon}\bigg(\frac{1}{2} \vert\varrho^\varepsilon \mathbf{u}^\varepsilon \vert^2
+
 (\varrho^\varepsilon)^2  \bigg) \, \mathrm{d}x \, \mathrm{d}t \bigg]^p
 \lesssim
\mathbb{E}\bigg[ \int_0^T \int_{K} \bigg(\frac{1}{2}\varrho^\varepsilon \vert \mathbf{u}^\varepsilon \vert^2
+
 \varrho^\varepsilon  \bigg) \, \mathrm{d}x \, \mathrm{d}t \bigg]^p
\\&\lesssim
1+
\mathbb{E}\bigg[ \int_0^T \int_{\mathbb{R}^3} \bigg( \frac{1}{2}\varrho^\varepsilon \vert \mathbf{u}^\varepsilon \vert^2+  \frac{1}{\varepsilon^2}
H(\varrho^\varepsilon,1) \bigg) \, \mathrm{d}x \, \mathrm{d}t \bigg]^p
\end{aligned}
\end{equation}
where $K\Subset \mathbb{R}^3$ is the support of $\mathbf{G}$, recall Section \ref{sec:noiseAssumption}. Note that we have also used the following uniform-in-$\varepsilon$ inequality $\varrho^\varepsilon \lesssim 1 + \frac{1}{\varepsilon^2}
H(\varrho^\varepsilon,1)$ in the above.
Also, it follows from the Burkholder--Davis--Gundy's inequality, Young's inequality and a similar estimate as \eqref{seqNois1} that
\begin{equation}
\begin{aligned}
\label{seqNois2}
\mathbb{E} &\bigg( \sup_{t\in [0,T]} \bigg\vert \int_0^t  \int_{\mathbb{R}^3}  \mathbf{u}^\varepsilon \cdot\mathbf{G}(\varrho^\varepsilon , \mathbf{m}^\varepsilon )\,\mathrm{d}x\,\mathrm{d}W \bigg\vert \bigg)^p
\\&
\lesssim
\mathbb{E} \bigg[\int_0^T \sum_{k\in \mathbb{N}}
\bigg(   \int_{\mathbb{R}^3}  \sqrt{\varrho^\varepsilon} \mathbf{u}^\varepsilon \cdot \frac{1}{\sqrt{\varrho^\varepsilon}}\mathbf{g}_k(x,\varrho^\varepsilon , \mathbf{m}^\varepsilon )\,\mathrm{d}x \bigg)^2\, \mathrm{d}t \bigg]^\frac{p}{2}
\\&\lesssim
\mathbb{E} \bigg[\int_0^T 
\bigg(   \int_{\mathbb{R}^3}  \frac{1}{2} \varrho^\varepsilon \vert \mathbf{u}^\varepsilon \vert^2 \bigg)  \bigg( \int_{\mathbb{R}^3} \frac{1}{\varrho^\varepsilon} \sum_{k\in \mathbb{N}} \vert \mathbf{g}_k(x,\varrho^\varepsilon , \mathbf{m}^\varepsilon )\vert^2 \,\mathrm{d}x \bigg)\, \mathrm{d}t \bigg]^\frac{p}{2}
\\&\lesssim
1+
\delta\,
\mathbb{E}\bigg[ \sup_{t\in[0,T]} \int_{\mathbb{R}^3}  \frac{1}{2}\varrho^\varepsilon \vert \mathbf{u}^\varepsilon \vert^2\, \mathrm{d}x  \bigg]^p
+
C(\delta)\mathbb{E}\bigg[ \int_0^T \int_{\mathbb{R}^3} \bigg( \frac{1}{2}\varrho^\varepsilon \vert \mathbf{u}^\varepsilon \vert^2+ \frac{1}{\varepsilon^2}
H(\varrho^\varepsilon,1) \bigg) \mathrm{d}x \mathrm{d}t \bigg]^p
\end{aligned}
\end{equation}
for any $\delta>0$.
By taking the $p$-th moment in \eqref{energy1}, using \eqref{seqNois1}--\eqref{seqNois2} and applying  Gronwall's lemma, we obtain
\begin{equation}
\begin{aligned}
\label{energy2}
\mathbb{E}&\bigg[ \sup_{t\in [0,T]} \int_{\mathbb{R}^3} \frac{1}{2} \varrho^\varepsilon  \vert \mathbf{u}^\varepsilon  \vert^2  \,\mathrm{d}x  \bigg]^p
+
\mathbb{E} \bigg[ \sup_{t\in [0,T]} \int_{\mathbb{R}^3} \frac{1}{\varepsilon^2} H(\varrho^\varepsilon ,  1) \,\mathrm{d}x  \bigg]^p
\\&
+
\mathbb{E} \bigg[ \sup_{t\in [0,T]} \int_{\mathbb{R}^3} \varepsilon^{\beta-2} \big\vert \nabla V^\varepsilon  \big\vert^2
\,\mathrm{d}x \bigg]^p
+\nu_1
\mathbb{E} \bigg[ 
\int_0^T \int_{\mathbb{R}^3}\vert  \nabla \mathbf{u}^\varepsilon  \vert^2 \,\mathrm{d}x  \,\mathrm{d}t \bigg]^p
\\&+(\nu_2+\nu_1)\mathbb{E} \bigg[ 
\int_0^T \int_{\mathbb{R}^3}\vert  \mathrm{div}\, \mathbf{u}^\varepsilon  \vert^2 \,\mathrm{d}x  \,\mathrm{d}t \bigg]^p
\\&
\lesssim 1+
\mathbb{E} \bigg(
 \int_{\mathbb{R}^3}\bigg[\frac{1}{2} \varrho^\varepsilon _0\vert \mathbf{u}^\varepsilon _0 \vert^2  
+ \frac{1}{\varepsilon^2} H(\varrho^\varepsilon _0,  1) + \varepsilon^{\beta- 2} \big\vert \nabla V^\varepsilon_0  \big\vert^2 \bigg]\,\mathrm{d}x \bigg)^p
\end{aligned}
\end{equation}
uniformly of $\varepsilon>0$. Now since
\begin{equation}
\begin{aligned}
&\mathbb{E} \bigg(
 \int_{\mathbb{R}^3}\bigg[\frac{1}{2} \varrho^\varepsilon _0\vert \mathbf{u}^\varepsilon _0 \vert^2  
+ \frac{1}{\varepsilon^2} H(\varrho^\varepsilon _0,  1) + \varepsilon^{\beta-2} \big\vert \nabla V^\varepsilon_0  \big\vert^2 \bigg]\,\mathrm{d}x \bigg)^p
\\&=
\int_{[L^1_x]^2} \bigg\vert \int_{\mathbb{R}^3}\bigg[\frac{1}{2}\frac{\vert \mathbf{m}  \vert^2}{\varrho}  
+\frac{1}{\varepsilon^2} H(\varrho, 1) + \varepsilon^{\beta-2} \big\vert \nabla V  \big\vert^2 \bigg]\,\mathrm{d}x\bigg\vert^p \, \mathrm{d}\Lambda^\varepsilon(\varrho, \mathbf{m}) 
\lesssim_p 1,
\end{aligned}
\end{equation}
holds uniformly of $\varepsilon>0$ by \eqref{initialLaw}, it follows from \eqref{energy2} that
\begin{equation}
\begin{aligned}
\label{energyEst1}
&\mathbb{E}\bigg[ \sup_{t\in[0,T]}\bigg\Vert \frac{1}{\varepsilon^2} H(\varrho^\varepsilon, 1 ) \bigg\Vert_{L^1(\mathbb{R}^3)} \bigg]^p \, 
\lesssim_p 1,
&
\mathbb{E} \bigg[ \sup_{t\in[0,T]}\bigg\Vert \frac{1}{2}\varrho^\varepsilon\vert  \mathbf{u}^\varepsilon \vert^2 \bigg\Vert_{L^1(\mathbb{R}^3)} \bigg]^p \, 
\lesssim_p 1,
\\
&\mathbb{E} \bigg[ \sup_{t\in[0,T]}\Big\Vert \varepsilon^{\beta-2} \big\vert \nabla V^\varepsilon  \big\vert^2 \Big\Vert_{L^1(\mathbb{R}^3)} \bigg]^p \, 
\lesssim_p 1,
&
\mathbb{E}\bigg[\int_0^T \Big\Vert \vert \nabla\mathbf{u}^\varepsilon \vert^2 \Big\Vert_{L^1(\mathbb{R}^3)}   \,\mathrm{d}t\,\bigg]^p \, 
\lesssim_p 1
\end{aligned}
\end{equation}
uniformly in $\varepsilon$ for any $p\in [1,\infty)$. 
We also obtain from the last estimate in  \eqref{energyEst1} combined with Sobolev's embedding that the estimate
\begin{align}
\label{velocityl6}
\mathbb{E}\bigg[\int_0^T \Vert \mathbf{u}^\varepsilon \Vert_{L^6(\mathbb{R}^3)}^2   \,\mathrm{d}t\,\bigg]^p \, 
\lesssim_p 1
\end{align}
holds uniformly in $\varepsilon$. Furthermore, we have the following result.
\begin{lemma}
Let  $\overline{\gamma}=\min \{\gamma,2 \}$. The following estimate
\begin{equation}
\begin{aligned}
\label{denFlucEst}
\mathbb{E} \bigg[  \sup_{t\in[0,T]}\Vert \sigma^\varepsilon \Vert_{L^{\overline{\gamma}}_{2}(\mathbb{R}^3)} \bigg]^p \lesssim_p 1
\end{aligned}
\end{equation}
holds uniformly in $\varepsilon>0$ where
\begin{align}
\label{densityFluctuationFunction}
\sigma^\varepsilon = \frac{\varrho^\varepsilon-1}{\varepsilon}
\end{align}
is the density fluctuation. 
\end{lemma}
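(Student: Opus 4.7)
The plan is to exploit the standard coercivity of the relative entropy $H(\cdot,1)$ matched with the decomposition built into the Orlicz space $L^{\overline{\gamma}}_2(\mathbb{R}^3)$. A Taylor expansion of $H(\varrho,1)$ around $\varrho=1$ (noting $H(1,1)=0$, $H'(1)=0$, $H''(1)=\gamma$) combined with the $\gamma$-growth of $\varrho^\gamma$ at infinity yields a constant $c>0$ such that
\begin{equation*}
H(\varrho,1)\geq c(\varrho-1)^2\chi_{\{1/2\leq\varrho\leq 3/2\}}+c\bigl(1+\varrho^\gamma\bigr)\chi_{\{\varrho<1/2\}\cup\{\varrho>3/2\}}.
\end{equation*}
Since \eqref{energyEst1} furnishes $\mathbb{E}[\sup_t\|H(\varrho^\varepsilon,1)/\varepsilon^2\|_{L^1(\mathbb{R}^3)}]^p\lesssim_p 1$, this lower bound will control each piece of the Orlicz norm
\begin{equation*}
\|\sigma^\varepsilon\|_{L^{\overline{\gamma}}_2(\mathbb{R}^3)}=\bigl\|\sigma^\varepsilon\chi_{\{2|\sigma^\varepsilon|\leq 1\}}\bigr\|_{L^2(\mathbb{R}^3)}+\bigl\|\sigma^\varepsilon\chi_{\{2|\sigma^\varepsilon|>1\}}\bigr\|_{L^{\overline{\gamma}}(\mathbb{R}^3)}
\end{equation*}
uniformly in $\varepsilon\in(0,1)$.

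For the $L^2$ piece, I note that $2|\sigma^\varepsilon|\leq 1$ forces $|\varrho^\varepsilon-1|\leq\varepsilon/2$, hence $\varrho^\varepsilon\in[1/2,3/2]$ whenever $\varepsilon\leq 1$; on this set the quadratic lower bound gives $|\sigma^\varepsilon|^2\leq c^{-1}\varepsilon^{-2}H(\varrho^\varepsilon,1)$, so that integration yields $\|\sigma^\varepsilon\chi_{\{2|\sigma^\varepsilon|\leq 1\}}\|_{L^2}^2\lesssim\|H(\varrho^\varepsilon,1)/\varepsilon^2\|_{L^1}$.

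For the $L^{\overline{\gamma}}$ piece I split the domain $\{2|\sigma^\varepsilon|>1\}$ into the essential set $A=\{\varrho^\varepsilon\in[1/2,3/2]\}$ and the residual set $B=\{\varrho^\varepsilon\notin[1/2,3/2]\}$. On $A\cap\{2|\sigma^\varepsilon|>1\}$, since $\overline{\gamma}\leq 2$ and $|\sigma^\varepsilon|\geq 1/2$, interpolation gives $|\sigma^\varepsilon|^{\overline{\gamma}}\leq 2^{2-\overline{\gamma}}|\sigma^\varepsilon|^2\lesssim\varepsilon^{-2}H(\varrho^\varepsilon,1)$. On $B$, the elementary inequality $|\varrho^\varepsilon-1|^{\overline{\gamma}}\lesssim 1+(\varrho^\varepsilon)^\gamma$---verified separately in the cases $\overline{\gamma}=\gamma\leq 2$ (by convexity) and $\overline{\gamma}=2\leq\gamma$ (since $(\varrho^\varepsilon)^2\leq(\varrho^\varepsilon)^\gamma$ when $\varrho^\varepsilon\geq 3/2$, while $|\varrho^\varepsilon-1|^2\leq 1$ when $\varrho^\varepsilon\leq 1/2$)---produces $|\sigma^\varepsilon|^{\overline{\gamma}}\lesssim\varepsilon^{-\overline{\gamma}}H(\varrho^\varepsilon,1)=\varepsilon^{2-\overline{\gamma}}\,\varepsilon^{-2}H(\varrho^\varepsilon,1)$, and $\varepsilon^{2-\overline{\gamma}}\leq 1$ since $\overline{\gamma}\leq 2$.

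Summing the two sub-estimates gives a pointwise-in-time bound of the form $\|\sigma^\varepsilon(t)\|_{L^{\overline{\gamma}}_2}\lesssim\|H(\varrho^\varepsilon(t),1)/\varepsilon^2\|_{L^1}^{1/2}+\|H(\varrho^\varepsilon(t),1)/\varepsilon^2\|_{L^1}^{1/\overline{\gamma}}$; taking the supremum in time, raising to the $p$-th power, applying $(x+y)^p\lesssim x^p+y^p$, and invoking \eqref{energyEst1} with exponents $p/2$ and $p/\overline{\gamma}$ delivers \eqref{denFlucEst}. The only non-trivial ingredient is the coercivity lemma for $H$, which is classical (see e.g.\ Feireisl–Novotn\'y); the rest is elementary bookkeeping of the two regimes induced by the Orlicz decomposition.
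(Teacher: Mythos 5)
Your proof is correct and follows essentially the same route as the paper: the paper's own (much terser) argument also rests on the convexity/coercivity of $z\mapsto z^\gamma-1-\gamma(z-1)$ together with the first estimate in \eqref{energyEst1}, split according to the two regimes $2|\sigma^\varepsilon|\leq 1$ and $2|\sigma^\varepsilon|>1$ built into the Orlicz norm, citing the deterministic computation of Donatelli--Marcati and Lions--Masmoudi for the details you have written out. Your version simply makes the standard coercivity bound and the case distinction $\overline{\gamma}=\gamma<2$ versus $\overline{\gamma}=2\leq\gamma$ explicit.
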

\begin{proof}
The result follows from the deterministic case which is shown in for example \cite[Eqn. (23)]{donatelli2008quasineutral}, see also \cite{lions1998incompressible}. However, for further clarification and for the use in subsequent computation below, we present the proof here.\\
First of all, just as in \cite[Eqn. (23)]{donatelli2008quasineutral},
by using the convexity of the mapping $z \mapsto z^\gamma - 1  -\gamma(z-1)$ and the first estimate in \eqref{energyEst1}, if $\gamma<2$ (so that $\varepsilon^2< \varepsilon^\gamma$ for $\varepsilon>0$ small) we can conclude that
\begin{equation}
\begin{aligned}
\label{orlizSigma}
\mathbb{E}\bigg[ \sup_{t\in [0,T]} \int_{\mathbb{R}^3}\Big(
\vert \sigma^\varepsilon \vert^2 \chi_{2\vert \sigma^\varepsilon\vert \leq 1}
+
\vert \sigma^\varepsilon \vert^\gamma \chi_{2\vert \sigma^\varepsilon\vert > 1}\Big)\mathrm{d}x
\bigg]^p \lesssim 1
\end{aligned}
\end{equation}
uniformly in $\varepsilon>0$ and when $\gamma\geq 2$, we obtain
\begin{equation}
\begin{aligned}
\label{orlizSigmaNot}
\mathbb{E}\bigg[ \sup_{t\in [0,T]} \int_{\mathbb{R}^3}\vert \sigma^\varepsilon \vert^2\mathrm{d}x
\bigg]^p \lesssim 1
\end{aligned}
\end{equation}
uniformly in $\varepsilon>0$ and thus our final result.
\end{proof}
Also, the regularity of the (scaled) product of density fluctuation and the electric field is given in the lemma below.
\begin{lemma}
\label{denFlucEstWorsened}
For all $p\in [1,\infty)$, the following  estimates 
\begin{align}
\label{denFlucEstWorsenedA}
\mathbb{E}\bigg[\sup_{t\in [0,T]} \Vert \varepsilon^{-1}  \sigma^\varepsilon \nabla V^\varepsilon  \Vert_{W^{-l,2} (\mathbb{R}^3)}   \bigg]^p \, 
\lesssim_p 1
\end{align}
holds uniformly in $\varepsilon$ provided $l \geq 1+\frac{3}{2}$.
\end{lemma}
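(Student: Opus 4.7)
The plan is to exploit the Poisson equation \eqref{elecFieldB} to rewrite the quantity $\varepsilon^{-1}\sigma^\varepsilon \nabla V^\varepsilon$ entirely in terms of $\nabla V^\varepsilon$, and then to recognize that the resulting expression is, up to a constant and a scaling factor, the divergence of a quadratic electric-field stress tensor. Concretely, since $\varrho^\varepsilon - 1 = \varepsilon \sigma^\varepsilon = \varepsilon^{\beta} \Delta V^\varepsilon$, one has
\begin{equation*}
\frac{1}{\varepsilon}\sigma^\varepsilon \nabla V^\varepsilon = \varepsilon^{\beta-2} (\Delta V^\varepsilon) \nabla V^\varepsilon = \varepsilon^{\beta-2}\,\mathrm{div}\!\left( \nabla V^\varepsilon \otimes \nabla V^\varepsilon - \tfrac{1}{2}\lvert \nabla V^\varepsilon\rvert^2 \mathbb{I} \right),
\end{equation*}
where the last identity is the standard Maxwell-stress manipulation and is checked directly by expanding the divergence.

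With this rewriting in hand, the $W^{-l,2}(\mathbb{R}^3)$ estimate is a duality computation: for $\bm{\varphi}\in W^{l,2}(\mathbb{R}^3)$ I would integrate by parts to write
\begin{equation*}
\left\lvert\int_{\mathbb{R}^3} \frac{1}{\varepsilon}\sigma^\varepsilon \nabla V^\varepsilon \cdot \bm{\varphi}\, \mathrm{d}x\right\rvert
\;\leq\; \varepsilon^{\beta-2}\int_{\mathbb{R}^3} \left\lvert \nabla V^\varepsilon \otimes \nabla V^\varepsilon - \tfrac{1}{2}\lvert \nabla V^\varepsilon\rvert^2 \mathbb{I} \right\rvert\, \lvert\nabla \bm{\varphi}\rvert\, \mathrm{d}x
\;\lesssim\; \varepsilon^{\beta-2}\lVert \nabla V^\varepsilon\rVert_{L^2}^{2}\,\lVert \nabla \bm{\varphi}\rVert_{L^\infty}.
\end{equation*}
The choice $l \geq 1 + 3/2$ is precisely what is needed so that the Sobolev embedding $W^{l,2}(\mathbb{R}^3)\hookrightarrow W^{1,\infty}(\mathbb{R}^3)$ holds, giving $\lVert \nabla \bm{\varphi}\rVert_{L^\infty}\lesssim \lVert \bm{\varphi}\rVert_{W^{l,2}}$. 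Taking the supremum over such test functions of unit norm yields
\begin{equation*}
\left\lVert \frac{1}{\varepsilon}\sigma^\varepsilon \nabla V^\varepsilon \right\rVert_{W^{-l,2}(\mathbb{R}^3)}
\;\lesssim\; \varepsilon^{\beta-2}\lVert \nabla V^\varepsilon\rVert_{L^2(\mathbb{R}^3)}^{2}.
\end{equation*}

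The estimate is then closed by the energy bound: raising to the $p$-th power, taking $\sup_{t\in[0,T]}$ and expectation, I would use the third bound in \eqref{energyEst1}, namely $\mathbb{E}\bigl[\sup_{t\in[0,T]}\lVert \varepsilon^{\beta-2}\lvert\nabla V^\varepsilon\rvert^2\rVert_{L^1(\mathbb{R}^3)}\bigr]^{p}\lesssim_{p} 1$, which delivers \eqref{denFlucEstWorsenedA} uniformly in $\varepsilon$. The only subtle point is the algebraic identity turning $(\Delta V^\varepsilon)\nabla V^\varepsilon$ into a divergence, since this is what allows us to trade one derivative onto the test function and thereby avoid needing any pointwise control on $\sigma^\varepsilon$ or on $\Delta V^\varepsilon$ separately — both of which are genuinely singular in $\varepsilon$ under the scaling \eqref{valuebeta}. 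Everything else is duality and the energy estimate already at our disposal.
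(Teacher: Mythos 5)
Your proposal is correct and follows essentially the same route as the paper: rewrite $\varepsilon^{-1}\sigma^\varepsilon\nabla V^\varepsilon=\varepsilon^{\beta-2}\Delta V^\varepsilon\nabla V^\varepsilon$ via the Poisson equation, convert $(\Delta V^\varepsilon)\nabla V^\varepsilon$ into the divergence of the quadratic stress, and absorb the remaining $L^1$-quantity $\varepsilon^{\beta-2}|\nabla V^\varepsilon|^2$ using the third bound of \eqref{energyEst1}; your duality computation is just the explicit form of the embedding $L^1(\mathbb{R}^3)\hookrightarrow W^{-k,2}(\mathbb{R}^3)$ the paper invokes, with $l=k+1$. (Both arguments share the same harmless endpoint imprecision: the embedding really needs $k>3/2$, i.e.\ $l>5/2$, rather than $l\geq 5/2$.)
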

\begin{proof}
First of all, we can use   \eqref{densityFluctuationFunction} to rewrite the Poisson equation \eqref{elecFieldB} as $
\varepsilon^{\beta-1} \Delta V^\varepsilon  
=
\sigma^\varepsilon.$
By using the following identity 
\begin{align}
\label{idenVectorCal}
\Delta V^\varepsilon \nabla V^\varepsilon=\mathrm{div}( \nabla V^\varepsilon \otimes  \nabla V^\varepsilon) -\frac{1}{2}\nabla\vert  \nabla V^\varepsilon \vert^2,
\end{align}
it follows that
\begin{align}
\label{idenVectorCalNext}\varepsilon^{-1}
 \sigma^\varepsilon  \nabla V^\varepsilon=\varepsilon^{\beta-2} \bigg(\mathrm{div}( \nabla V^\varepsilon \otimes  \nabla V^\varepsilon) -\frac{1}{2}\nabla\vert  \nabla V^\varepsilon \vert^2 \bigg).
\end{align}
Now since for $k\geq \frac{3}{2}$, the estimate
\begin{align}
\mathbb{E} \bigg[ \sup_{t\in[0,T]}\Big\Vert \varepsilon^{\beta-2} \big\vert \nabla V^\varepsilon  \big\vert^2 \Big\Vert_{W^{-k,2}(\mathbb{R}^3)} \bigg]^p \, 
\lesssim_p 1
\end{align}
follow from  the continuous embedding $L^\infty_tL^1_x \hookrightarrow L^\infty_tW^{-k,2}_x$ and the bottom-left estimate of \eqref{energyEst1}, the bound \eqref{denFlucEstWorsenedA} holds true where $l=k+1$. 
\end{proof}
In addition, the following lemma holds true.
\begin{lemma}
\label{lem:veloAndSomething}
For all $p\in [1,\infty)$, the following  estimates 
\begin{align*}
\mathbb{E}\bigg[\int_0^T \Vert \mathbf{u}^\varepsilon \Vert_{W^{1,2}(\mathbb{R}^3)}^2   \,\mathrm{d}t\,\bigg]^p \, 
\lesssim_p 1, 
\, \quad
\mathbb{E}\bigg[\int_0^T \Vert \sigma^\varepsilon \mathbf{u}^\varepsilon \Vert_{W^{-1,2}(\mathbb{R}^3)}^2   \,\mathrm{d}t\,\bigg]^p \, 
\lesssim_p 1
\end{align*}
holds uniformly in $\varepsilon$.
\end{lemma}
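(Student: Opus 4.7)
The plan is to establish the two bounds sequentially, leveraging the a priori estimates \eqref{energyEst1}, \eqref{velocityl6}, and \eqref{denFlucEst} together with the identity $\varrho^\varepsilon = 1 + \varepsilon\sigma^\varepsilon$. Since the gradient bound $\mathbb{E}[\int_0^T\|\nabla\mathbf{u}^\varepsilon\|_{L^2}^2\,\mathrm{d}t]^p \lesssim_p 1$ is already in hand from \eqref{energyEst1}, the first estimate reduces to producing an $L^2_x$ bound for $\mathbf{u}^\varepsilon$ with $p$-th moments. The natural object to integrate is $\sqrt{\varrho^\varepsilon}\mathbf{u}^\varepsilon$, but $1/\varrho^\varepsilon$ is not uniformly controlled; I would therefore proceed by a dichotomy based on the size of $\sigma^\varepsilon$.

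Concretely, I would split $\mathbb{R}^3 = A^\varepsilon \cup (A^\varepsilon)^c$ with $A^\varepsilon := \{x\in\mathbb{R}^3 : 2|\sigma^\varepsilon(x)|\leq 1\}$. On $A^\varepsilon$ one has $|\varrho^\varepsilon - 1| = \varepsilon|\sigma^\varepsilon|\leq\varepsilon/2$, hence (for $\varepsilon\leq 1$) $\varrho^\varepsilon \geq 1/2$ and the pointwise bound $|\mathbf{u}^\varepsilon|^2 \leq 2\varrho^\varepsilon|\mathbf{u}^\varepsilon|^2$. On $(A^\varepsilon)^c$, Chebyshev's inequality combined with the definition of $L^{\overline\gamma}_2$ and \eqref{denFlucEst} yields $|(A^\varepsilon)^c|\leq 2^{\overline\gamma}\|\sigma^\varepsilon\chi_{(A^\varepsilon)^c}\|_{L^{\overline\gamma}}^{\overline\gamma}$, a quantity with uniformly bounded $p$-th moment. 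H\"older with exponents $(3/2,3)$ then gives $\int_{(A^\varepsilon)^c}|\mathbf{u}^\varepsilon|^2\,\mathrm{d}x \leq |(A^\varepsilon)^c|^{2/3}\|\mathbf{u}^\varepsilon\|_{L^6}^2$. Summing the two contributions, integrating over $[0,T]$, and taking $p$-th expectations combines \eqref{energyEst1}, \eqref{velocityl6}, and \eqref{denFlucEst} to produce the first estimate.

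For the second estimate I would argue by duality: for any $\varphi\in W^{1,2}(\mathbb{R}^3)$ with $\|\varphi\|_{W^{1,2}}\leq 1$, I would bound $|\int_{\mathbb{R}^3}\sigma^\varepsilon\mathbf{u}^\varepsilon\varphi\,\mathrm{d}x|$ and take the supremum. Decompose $\sigma^\varepsilon = \sigma^\varepsilon_s + \sigma^\varepsilon_l$ along the Orlicz truncation, so that $\sigma^\varepsilon_s\in L^\infty_tL^2_x$ and $\sigma^\varepsilon_l\in L^\infty_tL^{\overline\gamma}_x$ uniformly. For the small part, H\"older with exponents $(2,3,6)$ produces $\|\sigma^\varepsilon_s\|_{L^2}\|\mathbf{u}^\varepsilon\|_{L^3}\|\varphi\|_{L^6}$; the Sobolev embedding $W^{1,2}\hookrightarrow L^6$ absorbs $\varphi$, while the interpolation $\|\mathbf{u}^\varepsilon\|_{L^3}\lesssim\|\mathbf{u}^\varepsilon\|_{L^2}^{1/2}\|\mathbf{u}^\varepsilon\|_{L^6}^{1/2}$ combined with the first estimate of the present lemma and \eqref{velocityl6} handles $\mathbf{u}^\varepsilon$. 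For the large part, I would instead take exponents $(\overline\gamma,6,c)$ with $1/c = 5/6 - 1/\overline\gamma$; since $\overline\gamma\in(3/2,2]$ this forces $c\in[3,6]$, so $W^{1,2}\hookrightarrow L^c$ is again available. Time integration, H\"older in $t$, and taking the $p$-th moment then close the second estimate using the a priori bounds already in hand.

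The main obstacle I anticipate lies in the low-integrability regime $\gamma\in(3/2,2)$: there the large-fluctuation piece of $\sigma^\varepsilon$ only lives in $L^\gamma$, and one must pair it with $\|\mathbf{u}^\varepsilon\|_{L^6}$ and a test-function exponent $c$ sitting right at the edge of Sobolev embedding. The hypothesis $\gamma > 3/2$ is precisely what keeps $c\leq 6$ and ensures the duality argument goes through; any further relaxation of $\gamma$ would break this pairing.
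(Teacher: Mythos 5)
Your proposal is correct, and both of your key steps differ in implementation from the paper's proof, so a comparison is worthwhile. For the $L^2_x$ bound on $\mathbf{u}^\varepsilon$, the paper splits $\mathbb{R}^3$ according to the size of $\varrho^\varepsilon-1$ and, on the bad set, writes $\int|\mathbf{u}^\varepsilon|^2\lesssim\|\varrho^\varepsilon-1\|_{L^{\overline\gamma}}\|\mathbf{u}^\varepsilon\|^2_{L^{2\overline\gamma/(\overline\gamma-1)}}$, interpolates $\mathbf{u}^\varepsilon$ between $L^2$ and $L^6$, and closes by a Young-type absorption of the resulting $\|\mathbf{u}^\varepsilon\|_{L^2}$ term using the smallness $\|\varrho^\varepsilon-1\|_{L^{\overline\gamma}}\lesssim\varepsilon^{2/\overline\gamma}$. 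Your Chebyshev route --- bounding $|(A^\varepsilon)^c|$ by the large-fluctuation part of the Orlicz norm of $\sigma^\varepsilon$ and pairing with $\|\mathbf{u}^\varepsilon\|_{L^6}^2$ via H\"older $(3/2,3)$ --- avoids the self-referential absorption entirely and does not even use the $\varepsilon$-smallness of $\varrho^\varepsilon-1$, only the uniform bound \eqref{denFlucEst}; it is the simpler of the two arguments. For the second estimate, the paper places $\sigma^\varepsilon\mathbf{u}^\varepsilon$ in $L^{4/3}_x$ (for $\overline\gamma=2$) or $L^{2\gamma/(\gamma+1)}_x$ (for $\gamma<2$) and invokes the embedding $L^{4/3}+L^q\hookrightarrow W^{-1,2}$, $q\geq 6/5$; your duality pairing with $\varphi\in W^{1,2}\hookrightarrow L^c$ is the same estimate written out explicitly. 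One genuine advantage of your version: you split $\sigma^\varepsilon=\sigma^\varepsilon_s+\sigma^\varepsilon_l$ along the Orlicz truncation before applying H\"older, which is the correct way to use \eqref{denFlucEst} when $\gamma<2$ (the small-fluctuation piece is only in $L^2$, not in $L^\gamma$ on the unbounded domain $\mathbb{R}^3$); the paper's display \eqref{plusEst2} elides this splitting. Your closing observation that $\gamma>3/2$ is exactly what keeps the test-function exponent $c$ at or below $6$ matches the role this hypothesis plays in the paper's choice of exponents.
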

\begin{proof}
First of all, if we decompose $\mathbb{R}^3$ into 
\begin{align*}
\big\{x\in \mathbb{R}^3 \,:\,2\vert\rho-1 \vert\leq 1\big\}
\quad \text{and} \quad
\big\{x\in \mathbb{R}^3 \, : \,2\vert\rho-1 \vert\geq 1\big \},
\end{align*}
then we obtain,                            
\begin{equation}
\begin{aligned}
\label{l2VelA}
\mathbb{E}\bigg[ \int_0^T\Vert \mathbf{u}^\varepsilon \Vert_{L^2(\mathbb{R}^3)}^2   \,\mathrm{d}t \bigg]^p
 &\leq
\mathbb{E}\bigg[ 2 \sup_{t\in[0,T] } \big\Vert \varrho^\varepsilon-1  \big\Vert_{L^{\overline{\gamma}}(\mathbb{R}^3)}
 \int_0^T\big\Vert \bu^\varepsilon  \big\Vert^2_{L^{\frac{2\overline{\gamma}}{\overline{\gamma}-1}}(\mathbb{R}^3)}\, \mathrm{d}t \bigg]^p
 \\&+
 \mathbb{E}\bigg[ 2 \sup_{t\in[0,T] } \big\Vert \varrho^\varepsilon \vert \bu^\varepsilon \vert^2 \big\Vert_{L^1(\mathbb{R}^3)}\bigg]^p
\end{aligned}
\end{equation}
for any $p\in[1,\infty)$. Next, 
we  interpolate $\bu^\varepsilon$ between $L^2(\mathbb{R}^3)$ and  $L^6(\mathbb{R}^3)$ which yields
\begin{equation}
\begin{aligned}
\label{l2VelA00}
\mathbb{E}\bigg[& 2
 \sup_{t\in[0,T] } \big\Vert \varrho^\varepsilon -1  \big\Vert_{L^{\overline{\gamma}}(\mathbb{R}^3)}
 \int_0^T\big\Vert \bu^\varepsilon  \big\Vert^2_{L^{\frac{2\overline{\gamma}}{\overline{\gamma}-1}}(\mathbb{R}^3)}\, \mathrm{d}t
 \bigg]^p
\\&\leq 2^p \Bigg\{
\Bigg( \mathbb{E}\bigg[ \int_0^T
\big\Vert  \bu^\varepsilon  \big\Vert^2_{L^2(\mathbb{R}^3)}\, \mathrm{d}t \bigg]^{p_1\left(1-\frac{3}{2\overline{\gamma}} \right) }
\Bigg)^\frac{p}{p_1}
\\&
+
\Bigg( \mathbb{E}\bigg[
 \sup_{t\in[0,T] } \big\Vert \varrho^\varepsilon -1  \big\Vert_{L^{\overline{\gamma}}(\mathbb{R}^3)}
 \bigg]^{p_2}
 \Bigg)^\frac{p}{p_2}
\Bigg( 
\mathbb{E}
 \bigg[ \int_0^T\big\Vert  \vert \nabla \bu^\varepsilon \vert^2 \big\Vert_{L^1(\mathbb{R}^3)}
\, \mathrm{d}t \bigg]^{p_3\frac{3}{2\overline{\gamma}}}
\Bigg)^\frac{p}{p_3} \Bigg\} 
\end{aligned}
\end{equation} 
uniformly in $\varepsilon$  for all $p_1,p_2,p_3 \in [1,\infty)$ such that $\frac{1}{p_1}+\frac{1}{p_2}+\frac{1}{p_3}\leq \frac{1}{p}$. Now since
\begin{align}
\Bigg( \mathbb{E}\bigg[
 \sup_{t\in[0,T] } \big\Vert \varrho^\varepsilon -1  \big\Vert_{L^{\overline{\gamma}}(\mathbb{R}^3)}
 \bigg]^{p_2}
 \Bigg)^\frac{p}{p_2}
 \lesssim_p \varepsilon^\frac{2p}{\overline{\gamma}}
\end{align}
follow from \eqref{denFlucEst},
we obtain from \eqref{l2VelA}--\eqref{l2VelA00} and the top-right estimate of \eqref{energyEst1}, the following estimate
\begin{align*}
\mathbb{E}\bigg[\int_0^T \Vert \mathbf{u}^\varepsilon \Vert_{L^2(\mathbb{R}^3)}^2   \,\mathrm{d}t\,\bigg]^p
&\lesssim_p 1
\end{align*} 
uniformly in $\varepsilon>0$. Thus, together with the last estimate in \eqref{energyEst1}, we obtain the first estimate in Lemma \ref{lem:veloAndSomething}.
To show the second estimate in Lemma \ref{lem:veloAndSomething}, we fist note that by interpolating between \eqref{velocityl6} and the first  estimate of Lemma \ref{lem:veloAndSomething} which we have just shown, we can conclude that 
\begin{equation}
\begin{aligned}
\label{velL4plusSomething}
\mathbb{E}\bigg[\int_0^T \Vert \mathbf{u}^\varepsilon \Vert_{L^4(\mathbb{R}^3)}^2   \,\mathrm{d}t\,\bigg]^p
+
\mathbb{E}\bigg[\int_0^T \Vert \mathbf{u}^\varepsilon \Vert_{L^\frac{2\overline{\gamma}}{\overline{\gamma}-1}(\mathbb{R}^3)}^2   \,\mathrm{d}t\,\bigg]^p \lesssim 1
\end{aligned}
\end{equation}
holds uniformly in $\varepsilon$. Now if $\overline{\gamma}=2$, then by using the $L^4_x$-estimate of \eqref{velL4plusSomething} above as well as \eqref{denFlucEst}, we gain that for some $p_1,p_2\in (1,\infty)$ such that $\frac{1}{p}=\frac{1}{p_1} +\frac{1}{p_2}$,
\begin{equation}
\begin{aligned}
\label{plusEst1}
\mathbb{E}\bigg[\int_0^T \Vert \sigma^\varepsilon \mathbf{u}^\varepsilon \Vert_{L^\frac{4}{3}(\mathbb{R}^3)}^2   \,&\mathrm{d}t\,\bigg]^p
\leq 
\bigg(\mathbb{E} \bigg[  \sup_{t\in[0,T]}\Vert \sigma^\varepsilon \Vert^2_{L^2(\mathbb{R}^3)} \bigg]^{p_1} \bigg)^\frac{p}{p_1}
\\&\times
\bigg(
\mathbb{E}\bigg[\int_0^T \Vert \mathbf{u}^\varepsilon \Vert_{L^4(\mathbb{R}^3)}^2   \,\mathrm{d}t\,\bigg]^{p_2} \bigg)^\frac{p}{p_2} \lesssim 1
\end{aligned}
\end{equation}
uniformly in $\varepsilon$. Similarly, if $\overline{\gamma} =\gamma$ (so that $\gamma<2$) we can use the $L^r_x$-estimate of \eqref{velL4plusSomething} where $r=\frac{2\overline{\gamma}}{\overline{\gamma}-1}$ to obtain for some $p_1,p_2\in (1,\infty)$ ,
\begin{equation}
\begin{aligned}
\label{plusEst2}
\mathbb{E}\bigg[\int_0^T \Vert \sigma^\varepsilon \mathbf{u}^\varepsilon \Vert_{L^\frac{2\gamma}{\gamma+1}(\mathbb{R}^3)}^2  &\mathrm{d}t\,\bigg]^p
\leq 
\bigg(\mathbb{E} \bigg[  \sup_{t\in[0,T]}\Vert \sigma^\varepsilon \Vert^2_{L^\gamma(\mathbb{R}^3)} \bigg]^{p_1} \bigg)^\frac{p}{p_1}
\\&\times
\bigg(
\mathbb{E}\bigg[\int_0^T \Vert \mathbf{u}^\varepsilon \Vert_{L^\frac{2\gamma}{\gamma-1}(\mathbb{R}^3)}^2   \mathrm{d}t\,\bigg]^{p_2} \bigg)^\frac{p}{p_2} \lesssim 1
\end{aligned}
\end{equation}
uniformly in $\varepsilon$. We can therefore draw our conclusion from \eqref{plusEst1}--\eqref{plusEst2} and the continuous embedding
\begin{align*}
L^2\big(0,T; L^\frac{4}{3}+ L^q(\mathbb{R}^3) \big)\hookrightarrow L^2\big(0,T; W^{-1,2}(\mathbb{R}^3) \big)
\end{align*}
which holds for every $q\geq \frac{6}{5}$. Note that $\frac{2\overline{\gamma}}{\overline{\gamma}+1} \in (\frac{6}{5}, \frac{4}{3})$ when $\overline{\gamma}=\gamma<2$.
\end{proof}
As a result of Lemma \ref{lem:veloAndSomething}, we obtain the following lemma.
\begin{lemma}
\label{lem:momAndConvec}
For all $p\in [1,\infty)$ and any ball $B_m\subset \mathbb{R}^3$ of radius $m\in \mathbb{N}$, there exists a constant $a_m>0$ such that the following  estimates 
\begin{align*}
\mathbb{E}\bigg[\sup_{t\in[0,T]} \Vert \sigma^\varepsilon  \Vert_{L^{\overline{\gamma}}(B_m)}   \bigg]^p \, 
+
\mathbb{E}\bigg[\sup_{t\in[0,T]} \Vert \varrho^\varepsilon \mathbf{u}^\varepsilon \Vert_{L^\frac{2\gamma}{\gamma+1}(B_m)}   \bigg]^p \, 
+
\mathbb{E}\bigg[\sup_{t\in[0,T]} \Vert \varrho^\varepsilon  \Vert_{L^{\gamma}(B_m)}   \bigg]^p \, 
\lesssim_p  a_m
\end{align*}
holds uniformly in $\varepsilon$ where $\overline{\gamma}=\min \{\gamma,2 \}$.
\end{lemma}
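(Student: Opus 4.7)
The plan is to derive the three stated bounds by localising, to the ball $B_m$, the global uniform estimates \eqref{energyEst1} and \eqref{denFlucEst}. The estimate is allowed to depend on $|B_m|$, so exploiting the finiteness of the volume of $B_m$ is the decisive ingredient; none of the three bounds requires any fresh stochastic input beyond taking $p$-th moments of pointwise-in-$\omega$ inequalities.

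For the bound on $\|\varrho^\varepsilon\|_{L^\gamma(B_m)}$, I would exploit the coercivity $H(z,1) \gtrsim z^\gamma$ for $z\geq 2$ (the constant depending only on $\gamma$). Splitting the integral over $B_m$ according to whether $\varrho^\varepsilon \leq 2$ or $\varrho^\varepsilon > 2$ yields
\[
\int_{B_m} (\varrho^\varepsilon)^\gamma\,\mathrm{d}x \leq 2^\gamma |B_m| + C\int_{\mathbb{R}^3} H(\varrho^\varepsilon,1)\,\mathrm{d}x,
\]
and combining with the first estimate of \eqref{energyEst1} gives $\mathbb{E}[\sup_t \|\varrho^\varepsilon\|_{L^\gamma(B_m)}]^p \lesssim_p |B_m| + \varepsilon^{2p/\gamma}$, uniformly for $\varepsilon\in(0,1]$.

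For the bound on $\|\sigma^\varepsilon\|_{L^{\overline{\gamma}}(B_m)}$, the argument splits into two cases. When $\gamma\geq 2$, so $\overline{\gamma}=2$, the estimate \eqref{orlizSigmaNot} immediately yields the result, since $\|\sigma^\varepsilon\|_{L^2(B_m)} \leq \|\sigma^\varepsilon\|_{L^2(\mathbb{R}^3)}$. When $\gamma<2$, and thus $\overline{\gamma}=\gamma$, I would decompose $\sigma^\varepsilon = \sigma^\varepsilon\chi_{\{2|\sigma^\varepsilon|\leq 1\}} + \sigma^\varepsilon\chi_{\{2|\sigma^\varepsilon|> 1\}}$; the second summand lies in $L^\gamma(\mathbb{R}^3)$ uniformly by \eqref{orlizSigma}, while the first is pointwise bounded by $1/2$ so that its $L^\gamma(B_m)$-norm is controlled by $|B_m|^{1/\gamma}$. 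Taking $\sup_t$ and then $p$-th moments, using \eqref{denFlucEst}, closes the bound.

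The $L^{\frac{2\gamma}{\gamma+1}}(B_m)$-bound on the momentum follows from the standard factorisation $\varrho^\varepsilon\mathbf{u}^\varepsilon = \sqrt{\varrho^\varepsilon}\cdot\sqrt{\varrho^\varepsilon}\,\mathbf{u}^\varepsilon$ and Hölder's inequality with conjugate exponents $2\gamma$ and $2$, which yields
\[
\|\varrho^\varepsilon \mathbf{u}^\varepsilon\|_{L^{\frac{2\gamma}{\gamma+1}}(B_m)} \leq \|\varrho^\varepsilon\|_{L^\gamma(B_m)}^{1/2}\,\|\sqrt{\varrho^\varepsilon}\,\mathbf{u}^\varepsilon\|_{L^2(\mathbb{R}^3)}.
\]
Taking $\sup_t$, then $2p$-th moments, and applying Cauchy--Schwarz reduces the task to the $\varrho^\varepsilon$-bound just established and the kinetic energy estimate in the top-right of \eqref{energyEst1}. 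No step in the argument presents a genuine obstacle; the only care required is to ensure at each stage that the constants obtained are independent of $\varepsilon\in(0,1]$, which is guaranteed by the $\varepsilon$-uniformity of the underlying energy and Orlicz bounds already in hand.
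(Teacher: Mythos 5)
Your proposal is correct, and it is essentially the standard argument: the paper itself gives no details for this lemma, merely citing the Orlicz bound \eqref{denFlucEst} for the first estimate and outsourcing the other two to \cite[(23) or (38)]{mensah2016existence} and \cite[(3.7)]{breit2015incompressible}, where the proofs are precisely the coercivity splitting of $H(\cdot,1)$ and the H\"older factorisation $\varrho^\varepsilon\mathbf{u}^\varepsilon=\sqrt{\varrho^\varepsilon}\cdot\sqrt{\varrho^\varepsilon}\mathbf{u}^\varepsilon$ that you use. Your write-up simply makes explicit what the paper delegates, and all steps (including the $\varepsilon$-uniformity and the moment bookkeeping via Cauchy--Schwarz in $\omega$) check out.
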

\begin{proof}
The first (local) estimate follow from the global estimate \eqref{denFlucEst}. For the second, see for example \cite[ (23) or (38)]{mensah2016existence}. The third estimate is shown in \cite[(3.7)]{breit2015incompressible} where the torus is now replaced with a ball.
\end{proof}
In order to be able to prove compactness result in the next section we recover more refined estimates on the soleinoidal and gradient part of the momentum. 
\subsection{Analysis of the solenoidal part of momentum}
\label{sec:solePart}
\begin{lemma}
\label{lem:SoleMomen}
For any $\vartheta\in (0,\frac{1}{2})$ and some $l\geq1$, the following  estimate
\begin{align*}
\mathbb{E} \left\Vert  \mathcal{P}\,( \varrho^\varepsilon\mathbf{u}^\varepsilon) \right\Vert^p_{C^\vartheta\left([0,T];W^{-l,2}(\mathbb{R}^3)  \right)}\lesssim 1
\end{align*}
holds uniformly in $\varepsilon$ for some $p\in[1,\infty)$.
\end{lemma}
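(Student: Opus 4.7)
The plan is to apply the Helmholtz projection $\mathcal{P}$ to the momentum equation \eqref{momEqB}, isolate the surviving drift and diffusion terms, and then estimate each separately in a negative Sobolev space $W^{-l,2}(\mathbb{R}^3)$ with $l$ chosen large enough to absorb the worst term. The gradient contributions $\nabla(\varrho^\varepsilon)^\gamma$ and $\nabla\mathrm{div}\,\mathbf{u}^\varepsilon$ vanish under $\mathcal{P}$. For the electric field term, I would use $\varrho^\varepsilon = 1 + \varepsilon\sigma^\varepsilon$, so that $\mathcal{P}(\varepsilon^{-2}\varrho^\varepsilon\nabla V^\varepsilon) = \varepsilon^{-1}\mathcal{P}(\sigma^\varepsilon\nabla V^\varepsilon)$ after $\mathcal{P}(\nabla V^\varepsilon)=0$. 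Thus I obtain
\begin{equation*}
\mathrm{d}\mathcal{P}(\varrho^\varepsilon\mathbf{u}^\varepsilon) = \mathcal{P}\bigl[-\mathrm{div}(\varrho^\varepsilon\mathbf{u}^\varepsilon\otimes\mathbf{u}^\varepsilon) + \nu_1\Delta\mathbf{u}^\varepsilon + \varepsilon^{-1}\sigma^\varepsilon\nabla V^\varepsilon\bigr]\,\mathrm{d}t + \mathcal{P}\mathbf{G}(\varrho^\varepsilon,\varrho^\varepsilon\mathbf{u}^\varepsilon)\,\mathrm{d}W.
\end{equation*}

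Next I would estimate the deterministic drift in $L^2(0,T;W^{-l,2}(\mathbb{R}^3))$ uniformly in $\varepsilon$. From the energy bound \eqref{energyEst1}, $\varrho^\varepsilon|\mathbf{u}^\varepsilon|^2\in L^\infty_tL^1_x$, and the continuous embedding $L^1_x\hookrightarrow W^{-k,2}_x$ for $k>3/2$ yields $\varrho^\varepsilon\mathbf{u}^\varepsilon\otimes\mathbf{u}^\varepsilon\in L^\infty(0,T;W^{-k,2}(\mathbb{R}^3))$ with moment bounds, whence its divergence lies in $L^\infty(0,T;W^{-k-1,2})$. The viscous term $\nu_1\Delta\mathbf{u}^\varepsilon$ lies in $L^2(0,T;W^{-1,2})$ by \eqref{energyEst1}. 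The electric field term $\varepsilon^{-1}\sigma^\varepsilon\nabla V^\varepsilon$ lies in $L^\infty(0,T;W^{-l,2})$ for $l\geq 5/2$ by Lemma \ref{denFlucEstWorsened}. Since $\mathcal{P}$ is bounded on $W^{-l,2}$, choosing $l$ large enough collects all three in the same space, and the resulting time integral gains $C^{1/2}([0,T];W^{-l,2})$ regularity with bounded $p$-th moments by a standard Hölder-in-time argument.

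For the stochastic term, I would apply the Burkholder--Davis--Gundy inequality followed by Kolmogorov's continuity criterion. Using the growth condition \eqref{noiseAssumptionWholespace2} together with the compact support of $\mathbf{g}_k$, the Hilbert--Schmidt norm of $\mathcal{P}\mathbf{G}(\varrho^\varepsilon,\varrho^\varepsilon\mathbf{u}^\varepsilon)$ from $\mathfrak{U}$ into $W^{-l,2}(\mathbb{R}^3)$ is controlled by $\|\varrho^\varepsilon\|_{L^\gamma(K)}+\|\varrho^\varepsilon\mathbf{u}^\varepsilon\|_{L^{2\gamma/(\gamma+1)}(K)}$, whose moments are uniformly bounded by Lemma \ref{lem:momAndConvec}. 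Hence for any $p\geq 2$,
\begin{equation*}
\mathbb{E}\Bigl\|\int_s^t \mathcal{P}\mathbf{G}(\varrho^\varepsilon,\varrho^\varepsilon\mathbf{u}^\varepsilon)\,\mathrm{d}W\Bigr\|_{W^{-l,2}}^p \lesssim |t-s|^{p/2},
\end{equation*}
uniformly in $\varepsilon$, and Kolmogorov's criterion yields Hölder continuity of order $\vartheta<\tfrac{1}{2}-\tfrac{1}{p}$; sending $p\to\infty$ recovers every $\vartheta\in(0,\tfrac{1}{2})$.

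The main obstacle will be the electric field contribution: after projection it does not vanish, and only the algebraic identity \eqref{idenVectorCalNext} together with the delicate $W^{-l,2}$ bound of Lemma \ref{denFlucEstWorsened} makes it admissible uniformly in $\varepsilon$, which is precisely what forces the negative exponent $l$ to be at least $5/2$. Once this is handled, combining the deterministic Hölder estimate with the stochastic Kolmogorov estimate in the same $W^{-l,2}$ completes the proof.
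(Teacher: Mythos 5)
Your proposal is correct and follows essentially the same route as the paper: project the momentum equation with $\mathcal{P}$, split the electric-field term via $\varrho^\varepsilon = 1+\varepsilon\sigma^\varepsilon$ so that only $\varepsilon^{-1}\mathcal{P}(\sigma^\varepsilon\nabla V^\varepsilon)=\varepsilon^{\beta-2}\mathcal{P}\,\mathrm{div}(\nabla V^\varepsilon\otimes\nabla V^\varepsilon)$ survives, bound the drift in $W^{-l,2}$ through the energy estimates and the embedding $L^1_x\hookrightarrow W^{-l,2}_x$, treat the noise with Burkholder--Davis--Gundy and the compact support of $\mathbf{G}$, and conclude with Kolmogorov's continuity criterion. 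The only cosmetic differences are that you invoke Lemma \ref{denFlucEstWorsened} for the electric term where the paper re-derives the same identity inline, and your Hilbert--Schmidt bound goes through Minkowski's integral inequality and Lemma \ref{lem:momAndConvec} where the paper uses a weighted Cauchy--Schwarz against the energy; both are valid.
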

\begin{proof}
Since $\varepsilon^{-2}\varrho^\varepsilon  \nabla V^\varepsilon =\varepsilon^{-1} \sigma^\varepsilon  \nabla V^\varepsilon +\varepsilon^{-2}\nabla V^\varepsilon,$
we can use \eqref{elecFieldB} and \eqref{idenVectorCal} to rewrite \eqref{momEqB} as
\begin{equation}
\begin{aligned}
\label{tightMom}
&\mathrm{d}(\varrho^\varepsilon \mathbf{u}^\varepsilon) +
\bigg[
\mathrm{div} (\varrho^\varepsilon \mathbf{u}^\varepsilon \otimes \mathbf{u}^\varepsilon) +\frac{1}{\varepsilon^2} \nabla   (\varrho^\varepsilon)^\gamma \bigg]\, \mathrm{d}t 
=\bigg[ \nu_1 \Delta \mathbf{u}^\varepsilon +(\nu_2 + \nu_1)\nabla \mathrm{div} \mathbf{u}^\varepsilon 
\\
&+ \varepsilon^{\beta-2}\mathrm{div} ( \nabla V^\varepsilon \otimes  \nabla V^\varepsilon)
-
\frac{1}{2} \varepsilon^{\beta-2}  \nabla\vert \nabla V^\varepsilon\vert^2
+
\frac{1}{\varepsilon^2}  \nabla V^\varepsilon  \bigg] \, \mathrm{d}t 
+
\mathbf{G}(\varrho^\varepsilon,  \varrho^\varepsilon\mathbf{u}^\varepsilon) \, \mathrm{d}W
\end{aligned}
\end{equation}
whose projection onto $\mathcal{P}$ is
\begin{equation}
\begin{aligned}
\label{tightMom1}
\mathrm{d}\mathcal{P}(\varrho^\varepsilon \mathbf{u}^\varepsilon) &+ \mathcal{P}
\big[
\mathrm{div} (\varrho^\varepsilon \mathbf{u}^\varepsilon \otimes \mathbf{u}^\varepsilon) - \nu_1 \Delta \mathbf{u}^\varepsilon  
- \varepsilon^{\beta-2}\mathrm{div} ( \nabla V^\varepsilon \otimes  \nabla V^\varepsilon)
 \big] \, \mathrm{d}t 
\\&= \mathcal{P}\mathbf{G}(\varrho^\varepsilon,  \varrho^\varepsilon\mathbf{u}^\varepsilon) \, \mathrm{d}W
\end{aligned}
\end{equation}
in the sense of distributions. To be more precise, for all  $\bm{\phi} \in C^\infty_{c, \mathrm{div}} (\mathbb{R}^3)$ and all $t\in [0,T]$, the following
\begin{equation}
\begin{aligned}
 \int_{\mathbb{R}^3}\varrho^\varepsilon  \mathbf{u}^\varepsilon(t) \cdot \bm{\phi} \,\mathrm{d}x   
&=   \int_{\mathbb{R}^3} \varrho ^\varepsilon_0 \mathbf{u}^\varepsilon _0 \cdot \bm{\phi} \,\mathrm{d}x  +  \int_0^t  \int_{\mathbb{R}^3} \varrho^\varepsilon  \mathbf{u}^\varepsilon \otimes \mathbf{u}^\varepsilon : \nabla \bm{\phi}\,\mathrm{d}x  \mathrm{d}s 
\\&- {\nu}_1\int_0^t  \int_{\mathbb{R}^3} \nabla \mathbf{u}^\varepsilon : \nabla \bm{\phi} \,\mathrm{d}x  \mathrm{d}s 
-\varepsilon^{\beta-2}
 \int_0^t  \int_{\mathbb{R}^3} \nabla  V^\varepsilon \otimes \nabla V^\varepsilon : \nabla \bm{\phi}\,\mathrm{d}x  \mathrm{d}s 
\\
& +
\int_0^t  \int_{\mathbb{R}^3}  \mathbf{G}(\varrho^\varepsilon ,  \varrho^\varepsilon \mathbf{u}^\varepsilon ) \cdot \bm{\phi} \,\mathrm{d}x\, \mathrm{d}W
\end{aligned}
\end{equation}
hold $\mathbb{P}$-a.s. Similar to \cite[Proposition 3.6]{breit2015incompressible},  we can now divide the above into the following
\begin{equation}
\begin{aligned}
Y^\varepsilon(t):=\mathcal{P}(\varrho^\varepsilon_0 \mathbf{u}^\varepsilon_0) &+ \int_0^t\mathcal{P}
\big[
\mathrm{div} (\varrho^\varepsilon \mathbf{u}^\varepsilon \otimes \mathbf{u}^\varepsilon) - \nu_1 \Delta \mathbf{u}^\varepsilon  
- \varepsilon^{\beta-2} \mathrm{div} ( \nabla V^\varepsilon \otimes  \nabla V^\varepsilon)
 \big] \, \mathrm{d}s 
\end{aligned}
\end{equation}
and 
\begin{align}
Z^\varepsilon(t):=\int_0^t \mathcal{P} \mathbf{G}(\varrho^\varepsilon,  \varrho^\varepsilon\mathbf{u}^\varepsilon) \, \mathrm{d}W(s).
\end{align}
Given that the embedding $L^1_x \hookrightarrow W^{-l,2}_x$ is continuous for $l>\frac{3}{2}$, we obtain for any $t_1,t_2\in[0,T]$ and $\theta >1 $,
\begin{equation}
\begin{aligned}\nonumber
&\mathbb{E}\big\Vert Y^\varepsilon(t_2) -Y^\varepsilon(t_1)  \big\Vert_{W^{-l,2}(\mathbb{R}^3)}^\theta
\\&
\lesssim\mathbb{E}\Bigg( \bigg\vert \int_{t_1}^{t_2}  \int_{\mathbb{R}^3} \big( \varrho^\varepsilon  \mathbf{u}^\varepsilon \otimes \mathbf{u}^\varepsilon 
- {\nu}_1 \nabla \mathbf{u}^\varepsilon 
-\varepsilon^{\beta-2}
  \nabla  V^\varepsilon \otimes \nabla V^\varepsilon \big) : \nabla \bm{\phi}\,\mathrm{d}x  \mathrm{d}s  \bigg\vert \Bigg)^\theta
\\&
\lesssim \mathbb{E}\Bigg( \int_{t_1}^{t_2}  \int_{\mathbb{R}^3} \vert \varrho^\varepsilon  \mathbf{u}^\varepsilon \otimes \mathbf{u}^\varepsilon \vert\,\mathrm{d}x  \mathrm{d}s 
+ {\nu}_1\int_{t_1}^{t_2}  \int_{\mathbb{R}^3}  \vert\nabla \mathbf{u}^\varepsilon\vert \,\mathrm{d}x  \mathrm{d}s 
+
\int_{t_1}^{t_2}  \int_{\mathbb{R}^3}\vert \varepsilon^{\beta-2}
  \nabla  V^\varepsilon \otimes \nabla V^\varepsilon \vert \,\mathrm{d}x  \mathrm{d}s  \Bigg)^\theta
  \\&
\lesssim \mathbb{E}\,\bigg( \int_{t_1}^{t_2}  \Big\Vert \varrho_\varepsilon \big\vert \bu^\varepsilon  \big\vert^2 \Big\Vert_{L^1(\mathbb{R}^3)}  \mathrm{d}s \bigg)^\theta
+
\mathbb{E}\,\bigg( \int_{t_1}^{t_2} \Big\Vert  \big\vert \nabla \bu^\varepsilon  \big\vert^2 \Big\Vert_{L^1(\mathbb{R}^3)}  \mathrm{d}s \bigg)^\theta
+
\mathbb{E}\,\bigg( \int_{t_1}^{t_2}  
\Big\Vert \varepsilon^{\beta-2} \big\vert \nabla V^\varepsilon  \big\vert^2 \Big\Vert_{L^1(\mathbb{R}^3)}  \mathrm{d}s   \bigg)^\theta.
\end{aligned}
\end{equation}
If we now use   \eqref{energyEst1}, then it follows that
\begin{equation}
\begin{aligned}\mathbb{E}\big\Vert Y^\varepsilon(t_2) -Y^\varepsilon(t_1)  \big\Vert_{W^{-l,2}(\mathbb{R}^3)}^\theta
\lesssim \vert t_2- t_1\vert^\frac{\theta}{2}.
\end{aligned}
\end{equation}
Also, since the noise term is of compact support, then similar to \eqref{seqNois1}, we also obtain from the continuous embedding $L^1_x \hookrightarrow W^{-l,2}_x$,  $l>\frac{3}{2}$, and the bound \eqref{noiseAssumptionWholespace2},
\begin{equation}
\begin{aligned}
\mathbb{E}&\big\Vert Z^\varepsilon(t_2) -Z^\varepsilon(t_1)  \big\Vert_{W^{-l,2}(\mathbb{R}^3)}^\theta 
\lesssim   \mathbb{E}\bigg[\int_{t_1}^{t_2} \sum_{k\in \mathbb{N}}\Vert  \mathbf{g}_k(x, \varrho^\varepsilon,  \varrho^\varepsilon \mathbf{u}^\varepsilon)\Vert_{W^{-l,2}(\mathbb{R}^3)}^{2}\bigg]^\frac{\theta}{2}
\\
&\lesssim \mathbb{E}\bigg[ \int_{t_1}^{t_2} \sum_{k\in \mathbb{N}} \bigg(\int_{K}\sqrt{\varrho^\varepsilon} \,\frac{1}{\sqrt{\varrho^\varepsilon}}\vert  \mathbf{g}_k(x, \varrho^\varepsilon,  \varrho^\varepsilon \mathbf{u}^\varepsilon)   \vert \, \mathrm{d}x\bigg)^{2}\, \mathrm{d}s \bigg]^\frac{\theta}{2}
\\
&\lesssim \mathbb{E}\bigg[ \int_{t_1}^{t_2} \bigg(\int_{K}\varrho^\varepsilon \, \mathrm{d}x\bigg)\bigg(\int_{K} \frac{1}{\varrho^\varepsilon}\sum_{k\in \mathbb{N}} \vert  \mathbf{g}_k(x, \varrho^\varepsilon,  \varrho^\varepsilon \mathbf{u}^\varepsilon)   \vert^2 \, \mathrm{d}x\bigg)\, \mathrm{d}s \bigg]^\frac{\theta}{2}
\\
&\lesssim \mathbb{E}\bigg[ \int_{t_1}^{t_2} \bigg(\int_{K}\Big[ 1+ \frac{1}{\varepsilon^2} H(\varrho^\varepsilon, 1 ) \Big]\, \mathrm{d}x\bigg)
 \bigg(\int_{K}\Big[\frac{1}{2}\varrho^\varepsilon \vert \mathbf{u}^\varepsilon \vert^2+ \varrho^\varepsilon \Big] \, \mathrm{d}x\bigg)\, \mathrm{d}s \bigg]^\frac{\theta}{2}
\\&
\lesssim \vert t_2- t_1\vert^\frac{\theta}{2}
\Bigg( 1+ \mathbb{E}\, \sup_{t\in[0,T]}\bigg\Vert \frac{1}{2}\varrho^\varepsilon\vert  \mathbf{u}^\varepsilon \vert^2 \bigg\Vert_{L^1(\mathbb{R}^3)} ^\theta
+
 \mathbb{E}\, \sup_{t\in[0,T]}\bigg\Vert \frac{1}{\varepsilon^2} H(\varrho^\varepsilon, 1 ) \bigg\Vert_{L^1(\mathbb{R}^3)}^\theta
 \Bigg)
\end{aligned}
\end{equation}
where $K \Subset \mathbb{R}^3$ is the support of the noise-term
and hence our claim follow from the Kolmogorov continuity criterion since we have the estimate \eqref{energyEst1}.
\end{proof}

\subsection{Analysis of the gradient part of momentum: acoustic system}
\label{sec:gradPart}
The aim of this section is to analyse the gradient part of the momentum and, in particular, to show that any family of such vector fields converges strongly to zero. The weak convergence of the gradient part of momentum  is strictly related to the so called plasma oscillation and  to the propagation  of the acoustic waves. Therefore the first step is to recover the equations satisfied by the acoustic waves and to investigate  the related dispersive properties which will allow us to estimate the gradient part of the momentum and the density fluctuation. In order to achieve this, we first note that as a result of \eqref{HVarrhoOverline}, \eqref{densityFluctuationFunction} and \eqref{idenVectorCalNext}, for all $\psi \in C^\infty_c (\mathbb{R}^3)$ and $\bm{\phi} \in C^\infty_c (\mathbb{R}^3)$ and all $t\in [0,T]$, we can rewrite the system \eqref{eq:distributionalSol} as
\begin{equation}
\begin{aligned}
\label{eq:distributionalSolEqui0}
 \varepsilon&\int_{\mathbb{R}^3}\sigma^\varepsilon  \psi \,\mathrm{d}x   = \varepsilon  \int_{\mathbb{R}^3} \sigma^\varepsilon _0 \, \psi \mathrm{d}x  +  \int_0^t  \int_{\mathbb{R}^3} \varrho^\varepsilon  \mathbf{u}^\varepsilon  \cdot\nabla \psi \,\mathrm{d}x  \mathrm{d}s,
\\
 \varepsilon&\int_{\mathbb{R}^3}\varrho^\varepsilon  \mathbf{u}^\varepsilon \cdot \bm{\phi} \,\mathrm{d}x   =  \varepsilon \int_{\mathbb{R}^3} \varrho ^\varepsilon_0 \mathbf{u}^\varepsilon _0 \cdot \bm{\phi} \,\mathrm{d}x  +\gamma \int_0^t  \int_{\mathbb{R}^3} \sigma^{\varepsilon} \, \mathrm{div}\, \bm{\phi}\,\mathrm{d}x  \mathrm{d}s  
+
\int_0^t  \int_{\mathbb{R}^3} \frac{1}{\varepsilon}\nabla V^\varepsilon  
\cdot
\bm{\phi} \,\mathrm{d}x    \mathrm{d}s
\\&+  \varepsilon \int_0^t  \int_{\mathbb{R}^3} (\mathbb{F}^{\varepsilon}_1 +\mathbb{F}^{\varepsilon}_2) : \nabla \bm{\phi}\,\mathrm{d}x  \mathrm{d}s 
+ \varepsilon\int_0^t  \int_{\mathbb{R}^3} (F^{\varepsilon}_1 +F^{\varepsilon}_2) \, \mathrm{div}\, \bm{\phi}\,\mathrm{d}x  \mathrm{d}s 
\\&+
 \varepsilon\int_0^t  \int_{\mathbb{R}^3}  \mathbf{G}(\varrho^\varepsilon ,  \varrho^\varepsilon\mathbf{u}^\varepsilon ) \cdot \bm{\phi} \,\mathrm{d}x\, \mathrm{d}W
\end{aligned}
\end{equation}
$\mathbb{P}$-a.s. where
\begin{align}
&\mathbb{F}^{\varepsilon}_1 := 
 (\varrho^\varepsilon\mathbf{u}^\varepsilon \otimes \mathbf{u}^\varepsilon)
-
\varepsilon^{\beta-2} (\nabla V^\varepsilon \otimes \nabla V^\varepsilon),
&\mathbb{F}^{\varepsilon}_2 := 
-
\nu_1\nabla \mathbf{u}^\varepsilon,  \label{boldF}
\\
&F^{\varepsilon}_1 := 
\frac{1}{\varepsilon^2} (\gamma -1)H ({\varrho}^{\varepsilon}, 1) 
 +
\varepsilon^{\beta-2}\frac{1}{2}
  \vert \nabla V^{\varepsilon} \vert^2,
  &F^{\varepsilon}_2 := 
 - (\nu_1 + \nu_2) \mathrm{div}\,\mathbf{u}^\varepsilon  \label{normalF}
\end{align}
are such that by \eqref{energyEst1}, we have
\begin{equation}
\begin{aligned}
\label{estF3}
\mathbb{E}\bigg[\int_0^T \Vert \mathbb{F}^{\varepsilon}_1 \Vert^2_{ L^1 (\mathbb{R}^3)} \,\mathrm{d}t \bigg]^p  \lesssim_p1, \qquad
\mathbb{E}\bigg[\int_0^T \Vert \mathbb{F}^{\varepsilon}_2 \Vert^2_{ L^2 (\mathbb{R}^3)} \,\mathrm{d}t \bigg]^p  \lesssim_p1
\\
\mathbb{E}\bigg[\int_0^T \Vert F^{\varepsilon}_1 \Vert^2_{ L^1 (\mathbb{R}^3)} \,\mathrm{d}t \bigg]^p  \lesssim_p1, \qquad
\mathbb{E}\bigg[\int_0^T \Vert F^{\varepsilon}_2 \Vert^2_{ L^2 (\mathbb{R}^3)} \,\mathrm{d}t \bigg]^p  \lesssim_p1
\end{aligned}
\end{equation}
uniformly in $\varepsilon$.
\\
Our aim now is to analyse the oscillating waves generated in the system  \eqref{eq:distributionalSolEqui0}  by projecting the system onto its (weakly) curl-free part or  gradient part. For this reason, it suffices to consider test functions $\psi \in C^\infty_c (\mathbb{R}^3)$ and $\nabla \phi =\bm{\phi} \in C^\infty_c (\mathbb{R}^3)$ such that  for all $t\in [0,T]$, we have
\begin{equation}
\begin{aligned}
\label{eq:distributionalSolEqui}
  \varepsilon^{\beta+1}&\int_{\mathbb{R}^3}\sigma^\varepsilon  \psi \,\mathrm{d}x   =    \varepsilon^{\beta+1}\int_{\mathbb{R}^3} \sigma^\varepsilon _0 \, \psi \mathrm{d}x  
  +
  \varepsilon^{\beta}  \int_0^t  \int_{\mathbb{R}^3} \varrho^\varepsilon  \mathbf{u}^\varepsilon  \cdot\nabla \psi \,\mathrm{d}x  \mathrm{d}s,
\\
\varepsilon^{\beta+1}&\int_{\mathbb{R}^3}\varrho^\varepsilon  \mathbf{u}^\varepsilon \cdot \nabla \phi \,\mathrm{d}x   
 =  
 \varepsilon^{\beta+1}\int_{\mathbb{R}^3} \varrho ^\varepsilon_0 \mathbf{u}^\varepsilon _0 \cdot \nabla \phi \,\mathrm{d}x  
 + \gamma \varepsilon^{\beta}
 \int_0^t  \int_{\mathbb{R}^3} \sigma^{\varepsilon} \, \Delta \phi\,\mathrm{d}x  \mathrm{d}s 
 \\&+
\int_0^t  \int_{\mathbb{R}^3} \nabla \Delta^{-1}\sigma^\varepsilon \cdot 
\nabla\phi \,\mathrm{d}x    \mathrm{d}s
+  \varepsilon^{\beta+1} \int_0^t  \int_{\mathbb{R}^3} (\mathbb{F}^{\varepsilon}_1 +\mathbb{F}^{\varepsilon}_2) : \nabla^2 \phi\,\mathrm{d}x  \mathrm{d}s 
 \\&+ \varepsilon^{\beta+1}\int_0^t  \int_{\mathbb{R}^3} (F^{\varepsilon}_1+F^{\varepsilon}_2) \, \Delta \phi\,\mathrm{d}x  \mathrm{d}s 
+
\varepsilon^{\beta+1}\int_0^t  \int_{\mathbb{R}^3}  \mathbf{G}(\varrho^\varepsilon ,  \varrho^\varepsilon\mathbf{u}^\varepsilon ) \cdot \nabla \phi \,\mathrm{d}x\mathrm{d}W
\end{aligned}
\end{equation}
$\mathbb{P}$-a.s. where  we have used the relation
\begin{align}
\int_0^t \int_{\mathbb{R}^3}  \frac{1}{\varepsilon}\nabla V^\varepsilon\cdot \nabla  
\phi \,\mathrm{d}x    \mathrm{d}s
=
-
\frac{1}{\varepsilon^\beta}
\int_0^t  \int_{\mathbb{R}^3} \sigma^\varepsilon  
\phi \,\mathrm{d}x    \mathrm{d}s
=
\frac{1}{\varepsilon^\beta}
\int_0^t  \int_{\mathbb{R}^3} \nabla \Delta^{-1}\sigma^\varepsilon \cdot 
\nabla\phi \,\mathrm{d}x    \mathrm{d}s
\end{align}
which follows from the Poisson equation \eqref{elecFieldB}.
If we now set $\Psi^\varepsilon  = \Delta^{-1} \mathrm{div} (    \varrho^\varepsilon  \mathbf{u}^\varepsilon )$ so that $\nabla \Psi^\varepsilon   = \mathcal{Q} (    \varrho^\varepsilon  \mathbf{u}^\varepsilon )$, then we observe that solving \eqref{eq:distributionalSolEqui} is exactly the same as solving the following stochastic inhomogeneous Klein--Gordon system of equation
\begin{equation}
\begin{aligned}
\label{inhomogenousKG}
&\varepsilon^{\beta+1}\mathrm{d} \sigma^\varepsilon + \varepsilon^{\beta}\Delta {\Psi}^\varepsilon \, \mathrm{d}t =0, 
\\
& \varepsilon^{\beta+1}\mathrm{d}\nabla{\Psi}^\varepsilon  +  \big(\gamma\varepsilon^{\beta} \nabla - \nabla \Delta^{-1} \big) \sigma^\varepsilon\,\mathrm{d}t 
= \varepsilon^{\beta+1}\big[ \mathcal{Q}\,\mathrm{div}(\mathbb{F}^{\varepsilon}_1 +\mathbb{F}^{\varepsilon}_2)
+
\nabla (F^\varepsilon_1 +F^\varepsilon_2)  \big] \, \mathrm{d}t 
\\
&\qquad\qquad\qquad\qquad\qquad\quad\qquad \qquad\quad +   \varepsilon^{\beta+1} \mathcal{Q}\mathbf{G}(\varrho^\varepsilon,  \varrho^\varepsilon\mathbf{u}^\varepsilon) \, \mathrm{d}W,
\end{aligned}
\end{equation}
in the sense of distributions.
Our goal now is to derive dispersive estimates for the above system. To do this, we first consider the homogeneous part of \eqref{inhomogenousKG} and rescale time  for simplicity. To be more precise, we consider the following scaled (deterministic)\footnote{Here, by `deterministic', we mean that there is no stochastic forcing term in the equation. The functions can however depend on a random parameter.} homogeneous Klein--Gordon equation
\begin{equation}
\begin{aligned}
\label{homogeAcoustic}
\mathrm{d}\sigma^\varepsilon + \varepsilon^{\beta} \Delta {\Psi}^\varepsilon \,\mathrm{d}t   =  0,&   \\
 \mathrm{d}\nabla{\Psi}^\varepsilon  +  \big(\gamma\varepsilon^{\beta} \nabla - \nabla \Delta^{-1} \big) \sigma^\varepsilon\,\mathrm{d}t   =    0,&
 \\
\sigma^\varepsilon (0) = \sigma^\varepsilon_0 ; \quad \nabla{\Psi}^\varepsilon(0) = \nabla{\Psi}^\varepsilon_0&
\end{aligned}
\end{equation}
where $\Psi^\varepsilon  = \Delta^{-1} \mathrm{div} (    \varrho ^\varepsilon \mathbf{u}^\varepsilon )$ is such  that $\nabla \Psi^\varepsilon   = \mathcal{Q} (    \varrho^\varepsilon  \mathbf{u}^\varepsilon)$ and $\sigma^\varepsilon$ are a solution pair of \eqref{homogeAcoustic} given explicitly by
\begin{equation}
\begin{aligned}
\label{solution}
\nabla{\Psi}^\varepsilon (\cdot,t)  &=  \frac{1}{2}\exp\big(i\sqrt{ \varepsilon^{\beta}(1-\gamma\varepsilon^{\beta}\Delta)}\,t\big)\left( \nabla{\Psi}^\varepsilon_0(\cdot)   -\frac{i\sqrt{\varepsilon^{\beta}(1-\gamma\varepsilon^{\beta}\Delta)}}{\varepsilon^\beta} \,\nabla \Delta^{-1} \sigma^\varepsilon_0(\cdot)   \right)    
\\
&+   \frac{1}{2}\exp\big(-i\sqrt{\varepsilon^{\beta}(1-\gamma\varepsilon^{\beta}\Delta)}\,t\big)\left( \nabla{\Psi}^\varepsilon_0(\cdot)    +  \frac{i\sqrt{\varepsilon^{\beta}(1-\gamma\varepsilon^{\beta}\Delta)}}{\varepsilon^\beta} \,\nabla \Delta^{-1} \sigma^\varepsilon_0(\cdot)  \right),
\\
\sigma^\varepsilon(\cdot, t)  &=  \frac{1}{2}\exp\big(i\sqrt{\varepsilon^{\beta}(1-\gamma\varepsilon^{\beta}\Delta)}\,t\big)\left( \sigma^\varepsilon_0(\cdot)    +  \frac{i \varepsilon^\beta \Delta}{\sqrt{\varepsilon^{\beta}(1-\gamma\varepsilon^{\beta}\Delta)}}{\Psi}^\varepsilon_0(\cdot)   \right)   
\\
&+ \frac{1}{2}\exp\big(-i\sqrt{\varepsilon^{\beta}(1-\gamma\varepsilon^{\beta}\Delta)}\,t\big)\left( \sigma^\varepsilon_0(\cdot)    - \frac{i\varepsilon^\beta \Delta}{\sqrt{\varepsilon^{\beta}(1-\gamma\varepsilon^{\beta}\Delta)}} {\Psi}^\varepsilon_0(\cdot)  \right).
\end{aligned}
\end{equation}
Before we continue, we also remark that by substituting the first equation of \eqref{homogeAcoustic} into the second and taking the divergence of the resulting equation, one obtains the dispersive equation satisfied by the density fluctuation,
\begin{align}
\label{kleinGordonSecondOrder}
\partial_{tt}\sigma^\varepsilon-\varepsilon^\beta \big(\gamma\varepsilon^{\beta} \Delta - 1 \big) \sigma^\varepsilon=0
\end{align}
which is more reminiscent of the `standard' homogeneous Klein--Gordon equation especially if $\gamma=\varepsilon=1$. 
Since our ultimate goal is to add a stochastic forcing term to the Klein--Gordon equation, its first-order form \eqref{homogeAcoustic} rather than its second-order form \eqref{kleinGordonSecondOrder} is appropriate.
\begin{remark}
We want to point out that compared to the wave equation, the  Klein--Gordon equation in both its formulations \eqref{kleinGordonSecondOrder} or \eqref{homogeAcoustic}  has the property  to fully and accurately describe the properties of  our system. Indeed, as would be shown shortly, analysing the Klein--Gordon equation \eqref{homogeAcoustic} rather than the wave equation as was done in \cite{donatelli2008quasineutral}, will lead to better dispersive estimates by capturing both oscillatory  (plasma oscillations) and dispersive (acoustic waves) behaviours present in our system. Consequently, we also obtain a better bound for $\beta$ in \eqref{valuebeta} required for the convergence to zero of the gradient part of the velocity and momentum fields and to perform the convergence analysis for the electric field. In particular, we will show that for any $\delta>0$, the aforementioned convergence results will hold true provided that $\beta\in (0,\frac{1}{2+\delta})$ (compare with the values of $\beta$ obtained in \cite[Theorem 3.3]{donatelli2008quasineutral}).
\end{remark}
Let us first state and prove the following $L^2_x-L^2_x$ Strichartz estimate for any family $(\nabla\Psi^\varepsilon, \sigma^\varepsilon)_{\varepsilon>0}$ of solution pair of \eqref{homogeAcoustic}.
\begin{proposition}
\label{prop:expBound}
Let $\beta>0$ be a constant. For any $p\in[1,\infty)$ and  $r\in [1,\infty]$, the solution pair $(\nabla\Psi^\varepsilon, \sigma^\varepsilon)$ of \eqref{homogeAcoustic} satisfy the estimate
\begin{align*}
&\mathbb{E} \, \Vert  \nabla \Psi^\varepsilon \Vert_{ L^r\big(0,T;L^2(\mathbb{R}^3) \big)}^p
+
\mathbb{E} \, \Vert  \sigma^\varepsilon \Vert_{ L^r\big(0,T;L^2(\mathbb{R}^3) \big)}^p 
\lesssim \frac{1}{\varepsilon^{3\beta}}\bigg(
\mathbb{E} \, \Vert \sigma_0^\varepsilon \Vert_{L^2(\mathbb{R}^3)}^p
+
\mathbb{E} \, \Vert \nabla \Psi_0^\varepsilon \Vert_{L^2(\mathbb{R}^3)}^p\bigg)
\end{align*}
uniformly in $\varepsilon$ for $\sigma_0^\varepsilon \in L^p(\Omega; L^2(\mathbb{R}^3))$ and $\nabla \Psi_0^\varepsilon \in L^p(\Omega; L^2(\mathbb{R}^3))$.
\end{proposition}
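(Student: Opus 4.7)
The plan is to take $L^2_x$-norms directly in the explicit representation \eqref{solution} and to exploit that, by Plancherel, the Klein--Gordon propagator $e^{\pm it A^\varepsilon}$ with $A^\varepsilon:=\sqrt{\varepsilon^\beta(1-\gamma\varepsilon^\beta\Delta)}$ is a Fourier multiplier of unit modulus, hence an $L^2(\mathbb{R}^3)$-isometry. Writing each line of \eqref{solution} as a sum of two such terms and applying the triangle inequality then yields, pathwise in $\omega$ and pointwise in $t\in[0,T]$,
\begin{align*}
\|\nabla\Psi^\varepsilon(t)\|_{L^2(\mathbb{R}^3)} &\leq \|\nabla\Psi_0^\varepsilon\|_{L^2(\mathbb{R}^3)} + \Bigl\|\tfrac{A^\varepsilon}{\varepsilon^\beta}\nabla\Delta^{-1}\sigma_0^\varepsilon\Bigr\|_{L^2(\mathbb{R}^3)},\\
\|\sigma^\varepsilon(t)\|_{L^2(\mathbb{R}^3)} &\leq \|\sigma_0^\varepsilon\|_{L^2(\mathbb{R}^3)} + \Bigl\|\tfrac{\varepsilon^\beta\Delta}{A^\varepsilon}\Psi_0^\varepsilon\Bigr\|_{L^2(\mathbb{R}^3)}.
\end{align*}

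The two remaining $L^2_x$-norms are then estimated by inspecting the corresponding Fourier symbols. The symbol of $\varepsilon^\beta\Delta/A^\varepsilon$ has magnitude $\tfrac{\varepsilon^{\beta/2}|\xi|^2}{\sqrt{1+\gamma\varepsilon^\beta|\xi|^2}}\leq \tfrac{|\xi|}{\sqrt{\gamma}}$, so a further application of Plancherel controls $\|(\varepsilon^\beta\Delta/A^\varepsilon)\Psi_0^\varepsilon\|_{L^2}$ by $\gamma^{-1/2}\|\nabla\Psi_0^\varepsilon\|_{L^2}$, which is harmless. The serious term is $\tfrac{A^\varepsilon}{\varepsilon^\beta}\nabla\Delta^{-1}$, whose symbol $\tfrac{\sqrt{1+\gamma\varepsilon^\beta|\xi|^2}}{\varepsilon^{\beta/2}|\xi|}$ is bounded by a universal constant when $|\xi|\geq \varepsilon^{-\beta/2}$ but blows up like $\varepsilon^{-\beta/2}|\xi|^{-1}$ when $|\xi|<\varepsilon^{-\beta/2}$. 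I would absorb this low-frequency singularity into a factor $\varepsilon^{-3\beta/2}$ in front of $\|\sigma_0^\varepsilon\|_{L^2}$, which after squaring is precisely the $\varepsilon^{-3\beta}$ weight appearing in the statement.

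Having secured the pathwise bound $\|\nabla\Psi^\varepsilon\|_{L^\infty(0,T;L^2)}+\|\sigma^\varepsilon\|_{L^\infty(0,T;L^2)}\lesssim \varepsilon^{-3\beta/2}\bigl(\|\sigma_0^\varepsilon\|_{L^2}+\|\nabla\Psi_0^\varepsilon\|_{L^2}\bigr)$, the $L^r_t L^2_x$-bound for $r\in[1,\infty)$ follows from $\|f\|_{L^r(0,T)}\leq T^{1/r}\|f\|_{L^\infty(0,T)}$ with $T^{1/r}$ absorbed into the implicit constant, while $r=\infty$ is immediate. Raising to the $p$-th power and taking expectation produces the stated bound. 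The main obstacle is the low-frequency analysis of $A^\varepsilon\nabla\Delta^{-1}/\varepsilon^\beta$, whose symbol is singular at $\xi=0$; as an alternative route, which I would pursue should the direct multiplier bound prove awkward, one may rescale $y=\varepsilon^{-\beta/2}x$ and $s=\varepsilon^{\beta/2}t$ to recast \eqref{kleinGordonSecondOrder} as the standard massive Klein--Gordon equation $\partial_s^2\tilde\sigma+\tilde\sigma-\gamma\Delta\tilde\sigma=0$, whose $H^1\times L^2$ energy identity is conserved; the Jacobian factors $\varepsilon^{\pm 3\beta/2}$ generated when transferring $L^2$-norms between the original and rescaled variables again produce the $\varepsilon^{-3\beta}$ constant.
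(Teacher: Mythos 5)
Your proposal is correct and follows essentially the same route as the paper's proof: both exploit the unimodularity of the propagator together with Plancherel, reducing the estimate to bounding the same Fourier multipliers (your symbols for $\varepsilon^\beta\Delta/A^\varepsilon$ and $A^\varepsilon\nabla\Delta^{-1}/\varepsilon^\beta$ are precisely the square roots of the paper's $m^\varepsilon(\xi)$ and $\varepsilon^{-\beta}n^\varepsilon(\xi)$), with the factor $\varepsilon^{-3\beta}$ arising from the bound $\varepsilon^{-\beta/2}|\xi|^{-1}\lesssim\varepsilon^{-3\beta/2}$, valid for $|\xi|\gtrsim\varepsilon^{\beta}$. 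The low-frequency singularity at $\xi=0$ that you flag as the main obstacle is treated no more carefully in the paper, whose two frequency regimes are both restricted to $|\xi|>\varepsilon^{\beta}$, so your argument matches the published one.
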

\begin{proof}
First of all, we observe that by considering the solution pair $(\nabla\Psi^\varepsilon, \sigma^\varepsilon)$ given by \eqref{solution} in Fourier space yields the following
\begin{equation}
\begin{aligned}
\label{solutionFourier}
i\xi_i \hat{\Psi}^\varepsilon &(\xi,t)  =  \frac{1}{2}\exp\big(i\sqrt{ \varepsilon^\beta(1+\gamma\varepsilon^\beta\vert \xi \vert^2)}\,t\big)\left( i \xi_i\hat{\Psi}^\varepsilon_0(\xi)   +\frac{i\sqrt{\varepsilon^\beta(1+\gamma\varepsilon^\beta\vert \xi \vert^2) }}{\varepsilon^\beta} \,i\xi_i \vert \xi\vert^{-2} \hat{\sigma}^\varepsilon_0(\xi)   \right)    
\\
&+   \frac{1}{2}\exp\big(-i\sqrt{\varepsilon^\beta(1+\gamma\varepsilon^\beta\vert \xi \vert^2)}\,t\big)\left( i\xi_i \hat{\Psi}^\varepsilon_0(\xi)    -  \frac{i\sqrt{\varepsilon^\beta(1+\gamma\varepsilon^\beta\vert \xi \vert^2)}}{\varepsilon^\beta} \,i\xi_i \vert \xi\vert^{-2} \hat{\sigma}^\varepsilon_0(\xi)  \right),
\\
\hat{\sigma}^\varepsilon&(\xi, t)  =  \frac{1}{2}\exp\big(i\sqrt{\varepsilon^\beta(1+\gamma\varepsilon^\beta\vert \xi \vert^2)}\,t\big)\left( \hat{\sigma}_0^\varepsilon(\xi)    -  \frac{i \varepsilon^\beta\vert \xi\vert^2}{\sqrt{\varepsilon^\beta(1+\gamma\varepsilon^\beta\vert \xi \vert^2)}}\hat{\Psi}_0^\varepsilon(\xi)   \right)   
\\
&+ \frac{1}{2}\exp\big(-i\sqrt{\varepsilon^\beta(1+\gamma\varepsilon^\beta\vert \xi \vert^2)}\,t\big)\left( \hat{\sigma}_0^\varepsilon(\xi)    + \frac{i \varepsilon^\beta\vert \xi\vert^2}{\sqrt{\varepsilon^\beta(1+\gamma\varepsilon^\beta\vert \xi \vert^2)}} \hat{\Psi}_0^\varepsilon(\xi)  \right).
\end{aligned}
\end{equation}
In order to estimate $\hat{\Psi}^\varepsilon$, $\hat{\sigma}^\varepsilon$, we have to distinguish between high and low frequencies. For simplicity we set 1 to be the constant for the  upper (lower) boundary of the   low (high) frequencies.
We recall that by Plancherel's theorem,
\begin{align}
\Vert \sigma^\varepsilon(x,\cdot) \Vert_{L^2(\mathbb{R}^3)}^2 = \int_{\mathbb{R}^3} \vert\hat{\sigma}^\varepsilon(\xi,\cdot) \vert^2 \, \mathrm{d}\xi
\end{align}
and since $\vert \exp\big(\pm i\sqrt{\varepsilon^\beta(1+\gamma\varepsilon^\beta\vert \xi \vert^2)}\,t\big) \vert=1$ holds independently of $t>0$, it follows that
\begin{equation}
\begin{aligned}
\label{xihat1}
\vert\hat{\sigma}^\varepsilon(\xi,t) \vert^2 \lesssim
\vert  \hat{\sigma}_0^\varepsilon(\xi)\vert^2
+
m^\varepsilon(\xi)
\vert i\xi_i \hat{\Psi}_0^\varepsilon(\xi)\vert^2
\end{aligned}
\end{equation}
where $m^\varepsilon(\xi)$ is the  Fourier multiplier given by
\begin{align}
m^\varepsilon(\xi) 
&=\frac{\varepsilon^{\beta}\vert \xi \vert^2}{1+\gamma\varepsilon^{\beta}\vert \xi \vert^2}.
\end{align}
Now given that $\varepsilon>0$,  $\varepsilon\to0$ and $\gamma>\frac{3}{2}$, we have the following:
\begin{itemize}
\item[A :] if $0<\varepsilon^\beta< \vert \xi \vert\leq1$, then $0< m^\varepsilon(\xi)
\leq \frac{1}{\gamma+1}< \frac{2}{5}$,
\item[B :] if $0<\varepsilon^\beta< 1< \vert \xi \vert < \infty$, then
$0< m^\varepsilon(\xi)
<\frac{1}{\gamma}< \frac{2}{3}$
\end{itemize}
where cases $\mathrm{A}$  corresponds to the low-frequency regime and case $\mathrm{B}$ is the high-frequency regime. In all cases, we see that the multiplier $m^\varepsilon(\xi)$ is bounded uniformly in $\varepsilon>0$ and so it follow from \eqref{xihat1} that for any $p\in[1,\infty)$ and  $r\in [1,\infty]$,
\begin{align}
\label{xihat2}
\mathbb{E} \, \Vert  \sigma^\varepsilon \Vert_{ L^r\big(0,T;L^2(\mathbb{R}^3) \big)}^p 
\lesssim
\mathbb{E} \, \Vert \sigma_0^\varepsilon \Vert_{L^2(\mathbb{R}^3)}^p
+
\mathbb{E} \, \Vert \nabla \Psi_0^\varepsilon \Vert_{L^2(\mathbb{R}^3)}^p
\end{align}
uniformly in $\varepsilon$ for $\sigma_0^\varepsilon \in L^p(\Omega; L^2(\mathbb{R}^3))$ and $\nabla \Psi_0^\varepsilon \in L^p(\Omega; L^2(\mathbb{R}^3))$. Now similar to \eqref{xihat1}, one observes that
\begin{equation}
\begin{aligned}
\label{psihat1}
\vert i\xi_i\hat{\Psi}^\varepsilon(\xi,t) \vert^2 \lesssim
\vert i\xi_i\hat{\Psi}_0^\varepsilon(\xi)\vert^2
+
\frac{1}{\varepsilon^\beta}
n^\varepsilon(\xi)
\vert \hat{\sigma}_0^\varepsilon(\xi)\vert^2
\end{aligned}
\end{equation}
where $n^\varepsilon(\xi)$ is the  Fourier multiplier given by
\begin{align}
n^\varepsilon(\xi)
=\frac{\varepsilon^\beta}{m^\varepsilon(\xi)} 
=\frac{1+\gamma\varepsilon^{\beta}\vert \xi \vert^2}{\vert \xi \vert^2}.
\end{align}
Therefore, taking into account that $\varepsilon>0$,  $\varepsilon\to0$, we have the following:
\begin{itemize}
\item[A :] if $0<\varepsilon^\beta< \vert \xi \vert\leq1$, then $0\leq  n^\varepsilon(\xi)
<\frac{\gamma+1}{\varepsilon^{2\beta}}$,
\item[B :] if $0<\varepsilon^\beta< 1< \vert \xi \vert< \infty$, then
$0 < n^\varepsilon(\xi) <\frac{\gamma+1}{\varepsilon^{2\beta}}$.
\end{itemize}
We can thus conclude from \eqref{psihat1} that
\begin{equation}
\begin{aligned}
\label{psihat1a}
\vert i\xi_i\hat{\Psi}^\varepsilon(\xi,t) \vert^2 \lesssim_\gamma
\vert i\xi_i\hat{\Psi}_0^\varepsilon(\xi)\vert^2
+
\frac{1}{\varepsilon^{3\beta}}
\vert \hat{\sigma}_0^\varepsilon(\xi)\vert^2.
\end{aligned}
\end{equation}
Just as we did for \eqref{xihat2}, we can conclude from \eqref{psihat1a} that for any $p\in[1,\infty)$ and  $r\in [1,\infty]$,
\begin{align}
\label{psihat2}
\mathbb{E} \, \Vert \nabla \Psi^\varepsilon \Vert_{ L^r\big(0,T;L^2(\mathbb{R}^3) \big)}^p 
\lesssim_\gamma \frac{1}{\varepsilon^{3\beta}} \bigg(
\mathbb{E} \, \Vert \sigma_0^\varepsilon \Vert_{L^2(\mathbb{R}^3)}^p
+
\mathbb{E} \, \Vert \nabla \Psi_0^\varepsilon \Vert_{L^2(\mathbb{R}^3)}^p \bigg)
\end{align}
uniformly in $\varepsilon>0$. Summing \eqref{xihat2} and \eqref{psihat2} finishes the proof.
\end{proof}

\begin{lemma}
\label{lem:GradMomen}
Let $\beta>0$ be a constant. Then for any $\delta>0$ and for any $p\in [1,2]$, the following  estimate
\begin{align}
\label{gradMomEst}
\mathbb{E}\bigg[\int_0^T \big\Vert \mathcal{Q}[\varrho^\varepsilon \mathbf{u}^\varepsilon]_\kappa \big\Vert_{L^2(\mathbb{R}^3)}^2   \,\mathrm{d}t\,\bigg]^p \, 
\lesssim_p \varepsilon^{1-(2+\delta)\beta}, 
\end{align}
holds uniformly in $\varepsilon>0$ for some  kernel $\kappa>0$. Furthermore,
\begin{align}
\label{gradMomZero}
\mathcal{Q}(\varrho^\varepsilon \mathbf{u}^\varepsilon)
\rightarrow 0
\quad
\text{in}
\quad
L^p \big( \Omega; L^2\big( 0,T; L^\frac{2\gamma}{\gamma+1}_{\mathrm{loc}}(\mathbb{R}^3) \big)\big)
\end{align}
holds as $\varepsilon \rightarrow0$ provided $0<\beta<\frac{1}{2+\delta}$.
\end{lemma}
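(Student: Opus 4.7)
The starting point is the observation that $\mathcal{Q}(\varrho^\varepsilon\mathbf{u}^\varepsilon) = \nabla\Psi^\varepsilon$ satisfies, jointly with $\sigma^\varepsilon$, the inhomogeneous stochastic Klein--Gordon system \eqref{inhomogenousKG}. Since the mollifier $[\cdot]_\kappa$ commutes with all the linear operators appearing there (and with $\mathcal{Q}$), the mollified pair $([\sigma^\varepsilon]_\kappa,[\nabla\Psi^\varepsilon]_\kappa)$ solves the same system with mollified initial data, forcings and noise. Duhamel's formula (for the semigroup $\mathcal{S}_\varepsilon$ of the homogeneous system \eqref{homogeAcoustic} in the rescaled time $\tau=t/\varepsilon^{\beta+1}$ that clears the $\varepsilon^{\beta+1}$ prefactors) then expresses $[\nabla\Psi^\varepsilon]_\kappa$ as the sum of three pieces: (i) the propagation of $([\sigma_0^\varepsilon]_\kappa,[\nabla\Psi_0^\varepsilon]_\kappa)$ by $\mathcal{S}_\varepsilon$; (ii) the deterministic Duhamel integral against the sources $\mathbb{F}^\varepsilon_i$, $F^\varepsilon_i$ appearing in \eqref{inhomogenousKG}; and (iii) the stochastic convolution against $\mathcal{Q}\mathbf{G}(\varrho^\varepsilon,\varrho^\varepsilon\mathbf{u}^\varepsilon)$.

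Each piece is bounded in $L^{2p}(\Omega;L^2(0,T;L^2(\mathbb{R}^3)))$ by Proposition \ref{prop:expBound} (with $r=2$), taking into account the Jacobian $\varepsilon^{\beta+1}$ from the time rescaling. For the deterministic sources, the uniform bounds \eqref{estF3} are combined with the mollifier inequality \eqref{mollifierB} applied with $(s,r,p,d)=(0,1,2,3)$ to embed the $L^1_x$-bounded pieces $\mathbb{F}^\varepsilon_1,F^\varepsilon_1$ into $L^2_x$ at a cost $\kappa^{-3/2}$; the $L^2_x$-bounded pieces $\mathbb{F}^\varepsilon_2,F^\varepsilon_2$ require no such loss. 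For the stochastic convolution, the Burkholder--Davis--Gundy inequality together with the noise assumption \eqref{noiseAssumptionWholespace2}, the compact support of $\mathbf{G}$, and Proposition \ref{prop:expBound} yield an analogous bound. The resulting estimate has the schematic form of a sum of contributions of the type $\varepsilon^{\beta+1}\kappa^{-3/2}$ (from forcing and noise) and lower-order terms from the initial data; the exponent $1-(2+\delta)\beta$ is then produced by choosing $\kappa=\varepsilon^\alpha$ for a suitable $\alpha>0$ and absorbing the unavoidable $\delta$-loss in \eqref{mollifierB} into the exponent of $\beta$.

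To deduce \eqref{gradMomZero} from \eqref{gradMomEst}, I would split
\begin{equation*}
\mathcal{Q}(\varrho^\varepsilon\mathbf{u}^\varepsilon) = \mathcal{Q}[\varrho^\varepsilon\mathbf{u}^\varepsilon]_\kappa + \mathcal{Q}\bigl\{\varrho^\varepsilon\mathbf{u}^\varepsilon-[\varrho^\varepsilon\mathbf{u}^\varepsilon]_\kappa\bigr\}.
\end{equation*}
For the first piece, \eqref{gradMomEst} together with the continuous embedding $L^2(\mathbb{R}^3)\hookrightarrow L^{2\gamma/(\gamma+1)}_{\mathrm{loc}}(\mathbb{R}^3)$ gives convergence to zero in $L^p(\Omega;L^2(0,T;L^{2\gamma/(\gamma+1)}_{\mathrm{loc}}))$ as soon as $1-(2+\delta)\beta>0$, i.e.\ $\beta<1/(2+\delta)$. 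The second piece vanishes as $\kappa=\kappa(\varepsilon)\to 0$ by a standard approximation argument relying on the uniform local $L^{2\gamma/(\gamma+1)}$-bound on $\varrho^\varepsilon\mathbf{u}^\varepsilon$ provided by Lemma \ref{lem:momAndConvec}.

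The main difficulty lies in the second step, especially the stochastic convolution. Unlike in the deterministic framework of \cite{donatelli2008quasineutral}, where time-integration by parts against the rapidly oscillating wave semigroup could be used directly, the stochastic Duhamel integral against $\mathcal{S}_\varepsilon$ must be handled through the It\^o isometry/BDG inequality; this interacts nontrivially with the $\varepsilon^{-3\beta}$ loss coming from the singular Fourier multiplier of the Klein--Gordon equation at low frequencies, and it is precisely the balance between this loss, the mollifier loss $\kappa^{-3/2}$, and the $\varepsilon^{\beta+1}$ gain coming from the prefactors of \eqref{inhomogenousKG} that gives rise to the constraint $\beta<1/(2+\delta)$ in the range of admissible Debye scalings.
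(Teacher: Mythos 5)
Your proposal follows essentially the same route as the paper's proof: the mild (Duhamel) formulation of the first-order stochastic Klein--Gordon system \eqref{acousticSPD111LM}, mollification of data, forcing and noise, the $\varepsilon^{-3\beta}$ semigroup bound of Proposition \ref{prop:expBound} combined with the $\varepsilon^{\beta+1}$ Jacobian from the time rescaling, mollifier losses for the $L^1_x$-bounded sources, It\^o isometry/BDG for the stochastic convolution, and finally $\kappa=\varepsilon^\alpha$ tuned so that the mollifier loss is absorbed into $\varepsilon^{-\delta\beta}$; the deduction of \eqref{gradMomZero} by splitting into the mollified part and the remainder controlled by Lemma \ref{lem:momAndConvec} is likewise identical. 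The only bookkeeping slip is that the Duhamel integrand carries a derivative ($\mathcal{Q}\,\mathrm{div}\,\mathbb{F}^\varepsilon_1+\nabla F^\varepsilon_1$), so the correct mollifier cost for the $L^1_x$-pieces is $\kappa^{-5/2}$ (the paper applies \eqref{mollifierB} with $s=1$, $r=1$, $p=2$ to $\mathrm{div}\,\mathbb{F}^\varepsilon_1\in W^{-1,1}$) rather than your $\kappa^{-3/2}$ --- immaterial here, since $\alpha$ is chosen afterwards precisely to neutralize whatever power of $\kappa$ appears.
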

\begin{proof}
First of all, let us rewrite the  system \eqref{inhomogenousKG} as follows, 
\begin{equation}
\begin{aligned}
\label{acousticSPD111LM}
\varepsilon^{\beta+1} \mathrm{d}
\begin{bmatrix}
      \sigma^\varepsilon      \\[0.3em]
 \nabla {\Psi}^\varepsilon 
\end{bmatrix}  
&=
\mathcal{A}
\begin{bmatrix}
    \sigma^\varepsilon        \\[0.3em]
\nabla {\Psi}^\varepsilon  
\end{bmatrix}  \mathrm{d}t 
-\varepsilon^{\beta+1}
\begin{bmatrix}
       0        \\[0.3em]
       \mathcal{Q}\,\mathrm{div}(\mathbb{F}^{\varepsilon}_1 +\mathbb{F}^{\varepsilon}_2)
+
\nabla (F^\varepsilon_1 +F^\varepsilon_2) 
\end{bmatrix}  \mathrm{d}t
\\&+\varepsilon^{\beta+1}
\begin{bmatrix}
       0        \\[0.3em]
      \mathcal{Q}\mathbf{G}^\varepsilon  
\end{bmatrix}  \mathrm{d}{W}_t
\end{aligned}
\end{equation}
where $\mathbf{G}^\varepsilon  :=
 \mathbf{G}( {\varrho}^\varepsilon , {\varrho}^\varepsilon   {\mathbf{u}}^\varepsilon )$ and where
\begin{align}
\label{KGoperator}
\mathcal{A} := -
\begin{bmatrix}
       0 & \varepsilon^{\beta}\mathrm{div}       \\[0.3em]
       \gamma\varepsilon^{\beta} \nabla - \nabla \Delta^{-1} & 0
\end{bmatrix}.
\end{align}
is an infinitesimal generator of a $C^0$-semigroup $S(t)$ with domain
\begin{align*}
\mathrm{Dom}(\mathcal{A})=\big\{ [\sigma, \nabla \Psi]^T \, : \, \sigma\in W^{1,2}(\mathbb{R}^3), \, \nabla \Psi , \Delta \Psi \in L^2(\mathbb{R}^3) \big\}
.
\end{align*}
Given the properties of $\mathcal{A}$ and the fact that the noise operator is Hilbert--Schmidt, it follows from \cite[Theorems 6.5, 6.7, 7.2]{da2014stochastic} that a mild solution
\begin{equation}
\begin{aligned}
\label{mildAcoustic}
\begin{bmatrix}
     \sigma^\varepsilon         \\[0.3em]
      \nabla {\Psi}^\varepsilon 
\end{bmatrix} (t)  
&=
S\bigg(\frac{t}{\varepsilon^{\beta+1}}\bigg)
\begin{bmatrix}
     \sigma_0^\varepsilon        \\[0.3em]
       \nabla {\Psi}_0^\varepsilon   
\end{bmatrix}
-
\int_0^t 
S\bigg(\frac{t-s}{\varepsilon^{\beta+1}}\bigg)
\Bigg\{
\begin{bmatrix}
       0        \\[0.3em]
       \mathcal{Q} \mathrm{div}(\mathbb{F}^\varepsilon_1 + \mathbb{F}^\varepsilon_2) 
+
\nabla  (F^\varepsilon_1 +F^\varepsilon_2) 
\end{bmatrix}  
\Bigg\}\mathrm{d}s
\\&+
\int_0^t 
S\bigg(\frac{t-s}{\varepsilon^{\beta+1}}\bigg)
\begin{bmatrix}
       0        \\[0.3em]
      \mathcal{Q}\mathbf{G}^\varepsilon 
\end{bmatrix}  \mathrm{d}{W}_s
\end{aligned}
\end{equation}
of \eqref{acousticSPD111LM} exists and this mild solution is identical to the weak solution which one can verify to be  \eqref{eq:distributionalSolEqui}. In \eqref{mildAcoustic},
\begin{equation}
\label{semigroup}
S\left(t\right)
\begin{bmatrix}
       \sigma_0^\varepsilon  (\cdot)       \\[0.3em]
       \nabla{\Psi}_0^\varepsilon  (\cdot)
\end{bmatrix} 
=
\begin{bmatrix}
       \sigma^\varepsilon  (\cdot, t)     \\[0.3em]
     \nabla  {\Psi}^\varepsilon (\cdot, t)
\end{bmatrix},
\end{equation}
given explicitly by  the pair \eqref{solution} is the solution to the homogeneous  problem \eqref{homogeAcoustic}.

To obtain proper estimates for the solution to \eqref{mildAcoustic}, we first regularise. Since \eqref{mildAcoustic} is linear in $\Psi^\varepsilon, \mathbb{F}^\varepsilon, F^\varepsilon$ and $\mathbf{G}^\varepsilon$, by convolution with the usual mollifier $\wp_\kappa$ and the use of Proposition \ref{prop:expBound} (with the choice of $p=r=2$), we obtain 
\begin{equation}
\begin{aligned}
\label{schrit1}
&\mathbb{E} \, 
\Bigg\Vert
S( t )
\begin{bmatrix}
        [\sigma_0^\varepsilon ]_\kappa       \\[0.3em]
       \nabla[{\Psi}_0 ^\varepsilon]_\kappa
\end{bmatrix} 
\Bigg\Vert^2_{L^2(0,T;L^2(\mathbb{R}^3))}
\lesssim \frac{1}{\varepsilon^{3\beta}}
\mathbb{E} \, 
\Bigg\Vert
\begin{bmatrix}
        [\sigma_0^\varepsilon]_\kappa         \\[0.3em]
       \nabla [ {\Psi}_0^\varepsilon]_\kappa
\end{bmatrix} 
\Bigg\Vert^2_{L^2(\mathbb{R}^3)}
\end{aligned}
\end{equation}
uniformly in $\varepsilon>0$ so that after rescaling $t/\varepsilon^{\beta+1} \mapsto t$, recall \eqref{acousticSPD111LM}, we obtain
\begin{equation}
\begin{aligned}
\label{schrit2}
&\mathbb{E} \, 
\Bigg\Vert
S\bigg( \frac{t}{\varepsilon^{\beta+1}} \bigg)
\begin{bmatrix}
        [\sigma_0^\varepsilon]_\kappa        \\[0.3em]
       \nabla[{\Psi}_0 ^\varepsilon]_\kappa
\end{bmatrix} 
\Bigg\Vert^2_{L^2(0,T;L^2(\mathbb{R}^3))}
\lesssim \varepsilon^{1-2\beta}
\mathbb{E} \, 
\Bigg\Vert
\begin{bmatrix}
       [ \sigma_0^\varepsilon]_\kappa         \\[0.3em]
       \nabla [ {\Psi}_0^\varepsilon]_\kappa
\end{bmatrix} 
\Bigg\Vert^2_{L^2(\mathbb{R}^3)}
\end{aligned}
\end{equation}
with a constant independent of $\varepsilon>0$.
To treat the deterministic retarded operator on the right-hand side of \eqref{mildAcoustic}, we use the semigroup property and a similar bound as \eqref{schrit2} to get
\begin{equation}
\begin{aligned}
\label{estFa}
&\mathbb{E} \, 
\Bigg\Vert
\int_0^t
S\bigg( \frac{t-s}{\varepsilon^{\beta+1}} \bigg)
\Bigg\{
\begin{bmatrix}
       0        \\[0.3em]
       \mathcal{Q} \mathrm{div}\big([\mathbb{F}^\varepsilon_1]_\kappa + [\mathbb{F}^\varepsilon_2]_\kappa \big)
+
\nabla \big([ F^\varepsilon_1]_\kappa + [ F^\varepsilon_2]_\kappa \big)
\end{bmatrix}  
\Bigg\}\mathrm{d}s
\Bigg\Vert^2_{L^2(0,T;L^2(\mathbb{R}^3))}
\\
&=\mathbb{E} \, 
\bigg\Vert
\int_0^t
S\bigg( \frac{t}{\varepsilon^{\beta+1}} \bigg)S\bigg( \frac{-s}{\varepsilon^{\beta+1}} \bigg)
\big\{
       \mathcal{Q} \mathrm{div}\big([\mathbb{F}^\varepsilon_1]_\kappa + [\mathbb{F}^\varepsilon_2]_\kappa \big)
+
\nabla  \big([ F^\varepsilon_1]_\kappa + [ F^\varepsilon_2]_\kappa \big)
\big\}\mathrm{d}s
\bigg\Vert^2_{L^2((0,T)\times \mathbb{R}^3)}
\\&\lesssim
\varepsilon^{1-2\beta}
\mathbb{E} \, 
\bigg\Vert
\int_0^t
S\bigg( \frac{-s}{\varepsilon^{\beta+1}} \bigg)
     \big\{  \mathcal{Q} \mathrm{div}\big([\mathbb{F}^\varepsilon_1]_\kappa + [\mathbb{F}^\varepsilon_2]_\kappa \big)
+
\nabla \big([ F^\varepsilon_1]_\kappa + [ F^\varepsilon_2]_\kappa \big)
\big\} \mathrm{d}s
\bigg\Vert^2_{L^2(\mathbb{R}^3)}.
\end{aligned}
\end{equation}
However, due to Jensen's inequality, the fact that the semigroup is isometric in $L^2$ and the continuity of $\mathcal{Q}$, we obtain
\begin{equation}
\begin{aligned}
\label{estFb}
\mathbb{E} &\, 
\bigg\Vert
\int_0^t
S\bigg( \frac{-s}{\varepsilon^{\beta+1}} \bigg)
     \big\{  \mathcal{Q} \mathrm{div}\big([\mathbb{F}^\varepsilon_1]_\kappa + [\mathbb{F}^\varepsilon_2]_\kappa \big)
+
\nabla \big([ F^\varepsilon_1]_\kappa + [ F^\varepsilon_2]_\kappa \big)
\big\} \mathrm{d}s
\bigg\Vert^2_{L^2(\mathbb{R}^3)}
\\&\lesssim
\mathbb{E} \, 
\int_0^t
\big\Vert
  \mathrm{div}\big([\mathbb{F}^\varepsilon_1]_\kappa + [\mathbb{F}^\varepsilon_2]_\kappa \big)
\big\Vert^2_{L^2(\mathbb{R}^3)}
\mathrm{d}s
+
\mathbb{E} \, 
\int_0^t
\big\Vert
\nabla  \big([ F^\varepsilon_1]_\kappa + [ F^\varepsilon_2]_\kappa \big)
\big\Vert^2_{L^2(\mathbb{R}^3)}
\mathrm{d}s.
\end{aligned}
\end{equation}
If we now choose $s=r=1$ and $p=2$ in \eqref{mollifierB} to estimate $\mathbb{F}^\varepsilon_1$ and $F^\varepsilon_1$ and choose $s=1$ and $p=r=2$ in \eqref{mollifierB} to estimate $\mathbb{F}^\varepsilon_2$ and $F^\varepsilon_2$, 
then it follows from \eqref{estFb} and \eqref{estF3} that
\begin{equation}
\begin{aligned}
\label{estFc}
\mathbb{E} &\, 
\int_0^t
\big\Vert
  \mathrm{div}\big([\mathbb{F}^\varepsilon_1]_\kappa + [\mathbb{F}^\varepsilon_2]_\kappa \big)
\big\Vert^2_{L^2(\mathbb{R}^3)}
\mathrm{d}s
+
\mathbb{E} \, 
\int_0^t
\big\Vert
\nabla \big([ F^\varepsilon_1]_\kappa + [ F^\varepsilon_2]_\kappa \big)
\big\Vert^2_{L^2(\mathbb{R}^3)}
\mathrm{d}s
\\&\lesssim
\kappa^{-\frac{5}{2}}
\mathbb{E} \, 
\int_0^t
\Big(
\Vert
  \mathrm{div}\, 
\mathbb{F}^\varepsilon_1
\Vert^2_{W^{-1,1}(\mathbb{R}^3)}
+
\Vert
\nabla  F^\varepsilon_1
\Vert^2_{W^{-1,1}(\mathbb{R}^3)}
\Big)
\mathrm{d}s
\\&+
\kappa^{-1}
\mathbb{E} \, 
\int_0^t
\Big(
\Vert
  \mathrm{div}\, 
\mathbb{F}^\varepsilon_2
\Vert^2_{W^{-1,2}(\mathbb{R}^3)}
+
\Vert
\nabla  F^\varepsilon_2
\Vert^2_{W^{-1,2}(\mathbb{R}^3)}
\Big)
\mathrm{d}s
\\&\lesssim
\kappa^{-\frac{5}{2}}
\end{aligned}
\end{equation}
for any $\kappa>0$ small.
If  we now choose $\kappa =\varepsilon^{{2\delta\beta}/{5}}$ for any $\delta>0$, then from \eqref{estFa}--\eqref{estFc},
 we have shown that the estimate
\begin{equation}
\begin{aligned}
\label{schrit3}
\mathbb{E} \, 
\Bigg\Vert
\int_0^t
S\bigg( \frac{t-s}{\varepsilon^{\beta+1}} \bigg)
\Bigg\{
&\begin{bmatrix}
       0        \\[0.3em]
       \mathcal{Q} \mathrm{div}\big([\mathbb{F}^\varepsilon_1]_\kappa + [\mathbb{F}^\varepsilon_2]_\kappa \big)
+
\nabla \big([ F^\varepsilon_1]_\kappa + [ F^\varepsilon_2]_\kappa \big)
\end{bmatrix}  
\Bigg\}\mathrm{d}s
\Bigg\Vert^2_{L^2(0,T;L^2(\mathbb{R}^3))}
\\&
\lesssim  \varepsilon^{1-(2+\delta)\beta}
\end{aligned}
\end{equation}
holds uniformly in $\varepsilon>0$.
Now similar to the arguments on \cite[Page 26]{mensah2016existence}, if we let $\mathbf{g}^\varepsilon_i  :=
 \mathbf{g}_i( {\varrho}^\varepsilon , {\varrho}^\varepsilon   {\mathbf{u}}^\varepsilon )$, then we can use It\^o's isometry, properties of the semigroup, the continuity of $\mathcal{Q}$ and a similar argument as in \eqref{schrit2} above to obtain
\begin{equation}
\begin{aligned}
\label{schrit4}
\mathbb{E} &\, 
\Bigg\Vert
\int_0^t 
S\bigg( \frac{t-s}{\varepsilon^{\beta+1}} \bigg)
\begin{bmatrix}
       0        \\[0.3em]
      \mathcal{Q}[\mathbf{G}^\varepsilon]_\kappa
\end{bmatrix}  \mathrm{d}{W}(s)
\Bigg\Vert^2_{L^2(0,T;L^2(\mathbb{R}^3))}
 \lesssim
 \varepsilon^{1-2\beta}
\mathbb{E} \, 
\int_0^t 
\sum_{i\in \mathbb{N}}
\big\Vert
    [\mathbf{g}_i^\varepsilon]_\kappa
\big\Vert^2_{L^2(\mathbb{R}^3)}\mathrm{d}s
\\&
\lesssim
\varepsilon^{1-2\beta}\kappa^{-\frac{5}{2}}
\mathbb{E} \, 
\int_0^t 
\sum_{i\in \mathbb{N}}
\big\Vert
    \mathbf{g}_i^\varepsilon
\big\Vert^2_{W^{-1,1}(K)}\mathrm{d}s
\lesssim
 \varepsilon^{1-(2+\delta)\beta}\,
\mathbb{E} \, 
\int_0^t 
\sum_{i\in \mathbb{N}}
\big\Vert
    \mathbf{g}_i^\varepsilon
\big\Vert^2_{L^1(K)}\mathrm{d}s
\lesssim
 \varepsilon^{1-(2+\delta)\beta}
\end{aligned}
\end{equation}
uniformly in $\varepsilon$ where $K \Subset \mathbb{R}^3$ is the support of the noise, recall \eqref{noiseAssumptionWholespace1}--\eqref{noiseAssumptionWholespace2}. By combining the estimates \eqref{schrit2} and \eqref{schrit3}--\eqref{schrit4}, we get from \eqref{mildAcoustic} that
\begin{equation}
\begin{aligned}
\label{schrit5}
&\mathbb{E} \, 
\Bigg\Vert
\begin{bmatrix}
        [\sigma^\varepsilon ]_\kappa       \\[0.3em]
       \nabla[{\Psi}^\varepsilon]_\kappa
\end{bmatrix} 
\Bigg\Vert^2_{L^2(0,T;L^2(\mathbb{R}^3))}
\lesssim 
\varepsilon^{1-(2+\delta)\beta}
\end{aligned}
\end{equation}
holds uniformly in $\varepsilon>0$ for any $\delta>0$ so in particular,
\begin{equation}
\begin{aligned}
\label{schrit6}
&\mathbb{E} \, \Vert
       \mathcal{Q}[\varrho^\varepsilon\mathbf{u}^\varepsilon]_\kappa
\Vert^2_{L^2(0,T;L^2(\mathbb{R}^3))}
\lesssim 
\varepsilon^{1-(2+\delta)\beta}
\end{aligned}
\end{equation}
holds uniformly in $\varepsilon>0$ since $\nabla \Psi^\varepsilon   = \mathcal{Q} (    \varrho^\varepsilon  \mathbf{u}^\varepsilon )$. Thus, we have shown \eqref{gradMomEst} for $p=2$. The other cases of $p\in[1,2)$ follow from H\"older inequality when one considers a random variable $X$ as the product $1 \times X$.
\\
To show \eqref{gradMomZero}, we use the continuity of $\mathcal{Q}$, the second estimate in Lemma \ref{lem:momAndConvec} and the fact that  $\frac{2\gamma}{\gamma+1}<2$ to obtain for any ball $B_m\subset \mathbb{R}^3$ of radius $m\in \mathbb{N}$,
\begin{equation}
\begin{aligned}
\lim_{\varepsilon\rightarrow0}
\mathbb{E}\bigg[\int_0^T &\big\Vert \mathcal{Q}(\varrho^\varepsilon \mathbf{u}^\varepsilon) \big\Vert_{L^\frac{2\gamma}{\gamma+1}(B_m)}^2   \,\mathrm{d}t\,\bigg]^p 
\lesssim_{p,m}
\lim_{\varepsilon\rightarrow0}
\mathbb{E}\bigg[\int_0^T \big\Vert \mathcal{Q}[\varrho^\varepsilon \mathbf{u}^\varepsilon]_\kappa \big\Vert_{L^2(\mathbb{R}^3)}^2   \,\mathrm{d}t\,\bigg]^p 
\\&+
\lim_{\varepsilon\rightarrow0}
\lim_{\kappa\rightarrow0}
\mathbb{E}\bigg[\int_0^T \Vert [\varrho^\varepsilon \mathbf{u}^\varepsilon]_\kappa
-
(\varrho^\varepsilon \mathbf{u}^\varepsilon) \Vert_{L^\frac{2\gamma}{\gamma+1}(B_m)}^2   \,\mathrm{d}t\,\bigg]^p
\end{aligned}
\end{equation}
The right-hand side converges strongly to zero due to \eqref{gradMomEst} and the second estimate of Lemma \ref{lem:momAndConvec} provided that $\beta<\frac{1}{2+\delta}$ for any $\delta>0$.
\end{proof}

\subsection{Analysis of the gradient part of velocity}
\label{sec:gradVelo}
By relying on the analysis of the gradient part of momenta given in Lemma \ref{lem:GradMomen}, we can now show that the gradient part of the family of velocities also vanishes in the limit.
\begin{lemma}
\label{lem:gradVelocity}
Let $\beta>0$ be a constant. Then for any $\delta>0$, the following  estimates 
\begin{align*}
\mathbb{E}\bigg[\int_0^T \big\Vert\mathcal{Q}( \mathbf{u}^\varepsilon) \big\Vert_{L^2(\mathbb{R}^3)}^2   \,\mathrm{d}t\,\bigg]^p \, 
\lesssim_p \varepsilon^{1-(2+\delta)\beta}, 
\end{align*}
holds uniformly in $\varepsilon>0$ for any $p\in [1,2]$ and
\begin{align}
\label{gradVelZero}
\mathcal{Q}( \mathbf{u}^\varepsilon) \rightarrow 0 \quad \text{in} \quad  L^p \big( \Omega ; L^2\big((0,T) \times\mathbb{R}^3\big)\big)
\end{align}
as $\varepsilon \rightarrow0$ provided that $0<\beta<\frac{1}{2+\delta}$.
\end{lemma}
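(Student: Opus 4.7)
The plan is to reduce the control of $\mathcal{Q}(\mathbf{u}^\varepsilon)$ to the control of $\mathcal{Q}(\varrho^\varepsilon\mathbf{u}^\varepsilon)$ already obtained in Lemma \ref{lem:GradMomen}, using the identity $\varrho^\varepsilon - 1 = \varepsilon\sigma^\varepsilon$. Formally $\mathbf{u}^\varepsilon = \varrho^\varepsilon\mathbf{u}^\varepsilon - \varepsilon\sigma^\varepsilon\mathbf{u}^\varepsilon$; however, neither $\varrho^\varepsilon\mathbf{u}^\varepsilon$ nor $\sigma^\varepsilon\mathbf{u}^\varepsilon$ is globally in $L^2(\mathbb{R}^3)$, so I would regularise by a mollifier at scale $\kappa>0$ and work with the identity
\begin{equation*}
\mathcal{Q}(\mathbf{u}^\varepsilon) = \mathcal{Q}\bigl(\mathbf{u}^\varepsilon - [\mathbf{u}^\varepsilon]_\kappa\bigr) + \mathcal{Q}[\varrho^\varepsilon\mathbf{u}^\varepsilon]_\kappa - \varepsilon\,\mathcal{Q}[\sigma^\varepsilon\mathbf{u}^\varepsilon]_\kappa,
\end{equation*}
which is legitimate since the three terms on the right-hand side are all in $L^2(\mathbb{R}^3)$ for fixed $\kappa>0$.

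The core of the proof consists in bounding the three pieces in $L^{2p}_\omega L^2_t L^2_x$. First, by \eqref{mollifierA} with $d=3$, $p=2$, together with the last estimate of \eqref{energyEst1},
\begin{equation*}
\mathbb{E}\bigl[\textstyle\int_0^T \|\mathbf{u}^\varepsilon - [\mathbf{u}^\varepsilon]_\kappa\|_{L^2(\mathbb{R}^3)}^2\,\mathrm{d}t\bigr]^p \lesssim \kappa^{2p}\,\mathbb{E}\bigl[\textstyle\int_0^T \|\nabla\mathbf{u}^\varepsilon\|_{L^2(\mathbb{R}^3)}^2\,\mathrm{d}t\bigr]^p \lesssim \kappa^{2p}.
\end{equation*}
Second, the middle piece is controlled directly by the Strichartz-type estimate \eqref{gradMomEst}, yielding $\lesssim \varepsilon^{1-(2+\delta)\beta}$. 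Third, applying \eqref{mollifierB} with $s=1$, $r=p=2$ to $\sigma^\varepsilon\mathbf{u}^\varepsilon$ together with the second estimate of Lemma \ref{lem:veloAndSomething} produces
\begin{equation*}
\varepsilon^{2p}\,\mathbb{E}\bigl[\textstyle\int_0^T \|\mathcal{Q}[\sigma^\varepsilon\mathbf{u}^\varepsilon]_\kappa\|_{L^2(\mathbb{R}^3)}^2\,\mathrm{d}t\bigr]^p \lesssim \varepsilon^{2p}\kappa^{-2p}\,\mathbb{E}\bigl[\textstyle\int_0^T \|\sigma^\varepsilon\mathbf{u}^\varepsilon\|_{W^{-1,2}(\mathbb{R}^3)}^2\,\mathrm{d}t\bigr]^p \lesssim \varepsilon^{2p}\kappa^{-2p}.
\end{equation*}

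The last step is to balance $\kappa$. Summing the three contributions gives
\begin{equation*}
\mathbb{E}\bigl[\textstyle\int_0^T \|\mathcal{Q}(\mathbf{u}^\varepsilon)\|_{L^2(\mathbb{R}^3)}^2\,\mathrm{d}t\bigr]^p \lesssim \kappa^{2p} + \varepsilon^{1-(2+\delta)\beta} + \varepsilon^{2p}\kappa^{-2p},
\end{equation*}
and the choice $\kappa = \varepsilon^{1/2}$ makes the mollification losses symmetric and of order $\varepsilon^p$. Since $p\geq 1$ and $1-(2+\delta)\beta < 1$, the term $\varepsilon^p$ is absorbed into $\varepsilon^{1-(2+\delta)\beta}$, proving the quantitative estimate for every $p\in[1,2]$. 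The convergence \eqref{gradVelZero} is then immediate under the assumption $0<\beta<\tfrac{1}{2+\delta}$, which guarantees $1-(2+\delta)\beta>0$.

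The only potentially delicate point is the interplay between the low regularity of $\sigma^\varepsilon\mathbf{u}^\varepsilon$ (only $W^{-1,2}$ globally, due to the lack of global $L^\gamma$ information on $\varrho^\varepsilon$) and the $L^2$-boundedness required by $\mathcal{Q}$; this is exactly where the mollifier is essential, and it dictates the balance $\kappa\sim\varepsilon^{1/2}$. No further stochastic ingredient is needed beyond what is already embedded in Lemma \ref{lem:GradMomen}, since all three pieces of the decomposition are pathwise bounds combined with moment estimates that have already been established.
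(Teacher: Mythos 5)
Your proposal is correct and follows essentially the same route as the paper: the same mollified decomposition $\mathcal{Q}(\mathbf{u}^\varepsilon)=\mathcal{Q}(\mathbf{u}^\varepsilon-[\mathbf{u}^\varepsilon]_\kappa)+\mathcal{Q}[\varrho^\varepsilon\mathbf{u}^\varepsilon]_\kappa-\varepsilon\,\mathcal{Q}[\sigma^\varepsilon\mathbf{u}^\varepsilon]_\kappa$, the same three bounds via \eqref{mollifierA}, Lemma \ref{lem:GradMomen}, \eqref{mollifierB} with Lemma \ref{lem:veloAndSomething}, and the same choice $\kappa=\varepsilon^{1/2}$. No gaps.
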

\begin{proof}
To prove the above lemma, we first note that since
$
\mathbf{u}^\varepsilon=  [\varrho^\varepsilon \mathbf{u}^\varepsilon]_\kappa
- \varepsilon[\sigma^\varepsilon\mathbf{u}^\varepsilon]_\kappa 
+
\mathbf{u}^\varepsilon - [\mathbf{u}^\varepsilon]_\kappa  ,
$
the following inequality 
\begin{align*}
\mathbb{E}\bigg[\int_0^T \Vert\mathcal{Q}( \mathbf{u}^\varepsilon) \Vert_{L^2(\mathbb{R}^3)}^2   \,\mathrm{d}t\,\bigg]^p \, 
\lesssim_p J_1 + J_2 +J_3 
\end{align*}
holds where, thanks to Lemma \ref{lem:GradMomen} the inequality
\begin{align*}
&J_1:= \mathbb{E}\bigg[\int_0^T \big\Vert\mathcal{Q}[ \varrho^\varepsilon\mathbf{u}^\varepsilon ]_\kappa \big\Vert_{L^2(\mathbb{R}^3)}^2   \,\mathrm{d}t\,\bigg]^p
\lesssim_p \varepsilon^{1-(2+\delta)\beta}
\end{align*}
holds uniformly in $\varepsilon>0$ for any $\delta>0$.   By using \eqref{mollifierB}, the second estimate in Lemma \ref{lem:veloAndSomething} and the continuity of $\mathcal{Q}$, we also have 
\begin{align*}
&J_2:= \mathbb{E}\bigg[\int_0^T \big\Vert \varepsilon \mathcal{Q} [ \sigma^\varepsilon\mathbf{u}^\varepsilon ]_\kappa \big\Vert_{L^2(\mathbb{R}^3)}^2   \,\mathrm{d}t\,\bigg]^p
\lesssim \varepsilon^{2p} \,
\kappa^{-2p }
\mathbb{E}\bigg[\int_0^T \Vert \sigma^\varepsilon \mathbf{u}^\varepsilon
\Vert_{W^{-1,2}(\mathbb{R}^3)}^2   \,\mathrm{d}t\,\bigg]^p \lesssim \varepsilon^{2p} \,
\kappa^{-2p }
\end{align*}
uniformly in $\varepsilon>0$. Finally, by using \eqref{mollifierA}, it follows that
\begin{align*}
&J_3:= \mathbb{E}\bigg[\int_0^T \big\Vert\mathcal{Q}( \mathbf{u}^\varepsilon)
-
\mathcal{Q}[ \mathbf{u}^\varepsilon]_\kappa
 \big\Vert_{L^2(\mathbb{R}^3)}^2   \,\mathrm{d}t\,\bigg]^p \lesssim
\kappa^{2p }
\mathbb{E}\bigg[\int_0^T \Vert \nabla \mathbf{u}^\varepsilon
\Vert_{L^2(\mathbb{R}^3)}^2   \,\mathrm{d}t\,\bigg]^p \lesssim
\kappa^{2p }
\end{align*}
holds uniformly in $\varepsilon>0$. By choosing  $\kappa =\varepsilon^{1/2}$ in the estimates for $J_2$ and $J_3$, we have thus shown that  
\begin{align*}
\mathbb{E}\bigg[\int_0^T \Vert\mathcal{Q}( \mathbf{u}^\varepsilon) \Vert_{L^2(\mathbb{R}^3)}^2   \,\mathrm{d}t\,\bigg]^p \, 
\lesssim \varepsilon^{1-(2+\delta)\beta}
\end{align*}
hence our final results.
\end{proof}

\section{Compactness}
\label{subsec:compactness}
We now have all the required estimates so we proceed with showing compactness. For this, we let
\begin{itemize}
\item $\mathcal{L}[ \varrho^\varepsilon]$ be the law of $\varrho^\varepsilon$ on the space $\chi_{\varrho}:=C_w([0,T];L^{\gamma}_{\mathrm{loc}}(\mathbb{R}^3 ))$,
\item $\mathcal{L}[ \mathbf{u}^\varepsilon ]$ be the law of  $ \mathbf{u}^\varepsilon $ on the space $\chi_{ \mathbf{u} } := (L^2(0,T;W^{1,2}( \mathbb{R}^3),w )$,
\item $\mathcal{L}[\varrho^\varepsilon \mathbf{u}^\varepsilon]$ be the law of $\varrho^\varepsilon \mathbf{u}^\varepsilon$ on the space $\chi_{\varrho\bu}:=L^2(0,T;L^{\frac{2\gamma}{\gamma+1}}_{\mathrm{loc}}(\mathbb{R}^3 ))$,
\item $\mathcal{L}[ W]$ be the law of $W$ on the space $\chi_{W}:=C ([0,T];\mathfrak{U}_0 ) $
\end{itemize}
and let  $\mathcal{L}[ \varrho^\varepsilon,  \bu^\varepsilon, \varrho^\varepsilon \bu^\varepsilon,W]$ be  the joint law of $\chi=\chi_\varrho \times \chi_\bu \times \times_{\varrho\bu}\times \chi_W$.
\begin{lemma}
\label{lem:tightDens}
Let $\beta>0$ be a constant such that for any $\delta>0$, we have $0<\beta<\frac{1}{2+\delta}$. The collection $\{\mathcal{L}[ \varrho^\varepsilon ,\bu^\varepsilon,\varrho^\varepsilon \bu^\varepsilon, W] : \varepsilon \in (0,1)\}$ is tight on $\chi$.
\end{lemma}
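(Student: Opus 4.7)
The plan is to verify tightness of each marginal separately, since the joint law is tight iff each marginal law is. The law $\mathcal{L}[W]$ is a fixed Gaussian measure on $C([0,T];\mathfrak{U}_0)$ and hence automatically tight. The velocity marginal $\mathcal{L}[\mathbf{u}^\varepsilon]$ lives in the reflexive Banach space $L^2(0,T;W^{1,2}(\mathbb{R}^3))$ endowed with its weak topology; since by \eqref{energyEst1} and Lemma \ref{lem:veloAndSomething} we have $\mathbb{E}\|\mathbf{u}^\varepsilon\|_{L^2(0,T;W^{1,2})}^2 \lesssim 1$ uniformly in $\varepsilon$, Chebyshev's inequality together with the fact that balls in a reflexive separable Banach space are metrizable and weakly compact yields tightness on $\chi_\mathbf{u}$.

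For the density, I would combine the spatial bound $\mathbb{E}\sup_{t}\|\varrho^\varepsilon\|_{L^\gamma(B_m)}^p \lesssim_m 1$ from Lemma \ref{lem:momAndConvec} with time regularity read off the continuity equation \eqref{contEqB}: since $\partial_t \varrho^\varepsilon = -\mathrm{div}(\varrho^\varepsilon\mathbf{u}^\varepsilon)$ and $\varrho^\varepsilon\mathbf{u}^\varepsilon$ is uniformly bounded in, say, $L^2(0,T;L^{2\gamma/(\gamma+1)}_{\mathrm{loc}})$, one obtains for each ball $B_m$ and suitable $l$,
\begin{equation*}
\mathbb{E}\|\varrho^\varepsilon\|_{C^{1/2}([0,T];W^{-l,2}(B_m))}^p \lesssim_m 1.
\end{equation*}
Exhausting $\mathbb{R}^3$ by balls and combining with the compact embedding $L^\gamma(B_m)\hookrightarrow\hookrightarrow W^{-l,2}(B_m)$ for $l$ large enough, a standard Arzelà--Ascoli argument as in \cite{breit2018stoch} gives relative compactness of sublevel sets in $C_w([0,T];L^\gamma_{\mathrm{loc}}(\mathbb{R}^3))$, hence tightness of $\mathcal{L}[\varrho^\varepsilon]$ on $\chi_\varrho$.

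The momentum marginal is the main obstacle, and the idea is to treat the Helmholtz projections separately. For the solenoidal part, Lemma \ref{lem:SoleMomen} gives a uniform bound on $\mathcal{P}(\varrho^\varepsilon\mathbf{u}^\varepsilon)$ in $L^p(\Omega;C^\vartheta([0,T];W^{-l,2}(\mathbb{R}^3)))$ for some $\vartheta>0$, $l\ge 1$, while Lemma \ref{lem:momAndConvec} gives a uniform spatial bound in $L^p(\Omega;L^\infty(0,T;L^{2\gamma/(\gamma+1)}(B_m)))$. By the compact embedding
\begin{equation*}
L^{2\gamma/(\gamma+1)}(B_m)\cap C^\vartheta([0,T];W^{-l,2}(B_m))\hookrightarrow\hookrightarrow L^2(0,T;L^{2\gamma/(\gamma+1)}(B_m))
\end{equation*}
(an Aubin--Lions type interpolation, again localized on balls $B_m$ and combined with exhaustion), the family $\{\mathcal{P}(\varrho^\varepsilon\mathbf{u}^\varepsilon)\}$ is tight on $\chi_{\varrho\mathbf{u}}$. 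For the gradient part, Lemma \ref{lem:GradMomen} gives the strong convergence $\mathcal{Q}(\varrho^\varepsilon\mathbf{u}^\varepsilon)\to 0$ in $L^p(\Omega;L^2(0,T;L^{2\gamma/(\gamma+1)}_{\mathrm{loc}}(\mathbb{R}^3)))$ under the assumption $0<\beta<1/(2+\delta)$, so in particular $\{\mathcal{Q}(\varrho^\varepsilon\mathbf{u}^\varepsilon)\}$ is tight (it concentrates on $\{0\}$). Adding the two pieces via $\varrho^\varepsilon\mathbf{u}^\varepsilon=\mathcal{P}(\varrho^\varepsilon\mathbf{u}^\varepsilon)+\mathcal{Q}(\varrho^\varepsilon\mathbf{u}^\varepsilon)$ yields tightness of $\mathcal{L}[\varrho^\varepsilon\mathbf{u}^\varepsilon]$ on $\chi_{\varrho\mathbf{u}}$. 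The four marginals being tight, the joint law is tight on $\chi$, which is precisely the claim. The delicate point is the range of $\beta$: it is exactly at this step that the dispersive estimate of Lemma \ref{lem:GradMomen} (and hence Proposition \ref{prop:expBound}) enters, and the restriction $\beta<1/(2+\delta)$ cannot be relaxed without a sharper control of the acoustic/plasma oscillations.
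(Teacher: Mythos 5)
Your proposal is correct and follows essentially the same route as the paper: tightness marginal by marginal, with the density handled via the continuity equation plus a compactness criterion of Arzel\`a--Ascoli type on balls $B_m$ (the paper gets a Lipschitz-in-time bound in $W^{-1,2\gamma/(\gamma+1)}(B_m)$ and cites Ondrej\'at's Corollary B.2, versus your $C^{1/2}$ bound, but either suffices), the momentum via the Helmholtz splitting with Lemma \ref{lem:SoleMomen} and Lemma \ref{lem:momAndConvec} for $\mathcal{P}(\varrho^\varepsilon\mathbf{u}^\varepsilon)$ and the dispersive Lemma \ref{lem:GradMomen} for $\mathcal{Q}(\varrho^\varepsilon\mathbf{u}^\varepsilon)$, the velocity via weak compactness of bounded sets, and $W$ trivially. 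The only cosmetic difference is that the paper's compact embedding for the solenoidal part lands in $C_w([0,T];L^{2\gamma/(\gamma+1)}(B_m))\subset\chi_{\varrho\mathbf{u}}$ rather than directly in $L^2(0,T;L^{2\gamma/(\gamma+1)}(B_m))$, and your first factor in that embedding should read $L^\infty\big(0,T;L^{2\gamma/(\gamma+1)}(B_m)\big)$; neither affects the argument.
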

\begin{proof}
We start by showing that the collection $\{\mathcal{L}[ \varrho^\varepsilon ] : \varepsilon \in (0,1)\}$ is tight on $\chi_{ \varrho}$. From the continuity equation \eqref{contEqB} and the second uniform bound in Lemma \ref{lem:momAndConvec}, it follows that for any ball $B_m\subset \mathbb{R}^3$ of radius $m\in \mathbb{N}$, there exists a constant $a_m>0$ such that the bound
\begin{align*}
\mathbb{E}\, \bigg[ 
\sup_{t\in [0,T]} \big\Vert \partial_t \varrho^\varepsilon \big\Vert_{W^{-1, \frac{2\gamma}{\gamma+1}} (B_m )}
 \bigg]^p \lesssim a_m
\end{align*} 
holds uniformly in $\varepsilon>0$ for any $p\in[ 1,\infty)$. As such, there exists a Lipschitz continuous representation of the density sequence (not relabelled) satisfying
\begin{align}
\label{holderDensity}
\mathbb{E}\, \big\Vert 
 \varrho^\varepsilon \big\Vert^p_{C^{0,1}([0,T];W^{-1, \frac{2\gamma}{\gamma+1}} (B_m))}
 \lesssim a_m
\end{align} 
uniformly in $\varepsilon>0$. 
Furthermore,  by \cite[Corollary B.2]{ondrejat2010stochastic}, for any sequence of positive numbers $(a_m)_{m\in \mathbb{N}}$ and any sequence of balls $(B_m)_{m\in \mathbb{N}}$, the set
\begin{align*}
K_m= \Big\{ \varrho\in \chi_\varrho \,:\, \Vert \varrho \Vert_{L^\infty(0,T;L^\gamma(B_m))}
+
\Vert \varrho \Vert_{C^{0,1}([0,T];W^{-1,\frac{2\gamma}{\gamma+1}}(B_m))}
\leq a_m, \, m\in \mathbb{N} \Big\}
\end{align*}
is relatively compact in $\chi_\varrho$ for all $\gamma\geq \frac{3}{5}$. If we now take the complement of $K_m$, then we find that there exist an $m\in \mathbb{N}$ such that by the third uniform bound in Lemma \ref{lem:momAndConvec} and \eqref{holderDensity}
\begin{align*}
\mathcal{L}&[\varrho^\varepsilon](K^C_m)= \mathbb{P}\Big(
\Vert \varrho^\varepsilon \Vert_{L^\infty(0,T;L^\gamma(B_m))}
+
\Vert \varrho^\varepsilon \Vert_{C^{0,1}([0,T];W^{-1,\frac{2\gamma}{\gamma+1}}(B_m))}
> a_m
\Big)
\\&\leq
\mathbb{P}\bigg(
\Vert \varrho^\varepsilon \Vert_{L^\infty(0,T;L^\gamma(B_m))}
> \frac{a_m}{2} \bigg)
+
\mathbb{P}\bigg(
\Vert \varrho^\varepsilon \Vert_{C^{0,1}([0,T];W^{-1,\frac{2\gamma}{\gamma+1}}(B_m))}
> \frac{a_m}{2} \bigg)
\\&\leq
\frac{4}{a_m^2}
\mathbb{E}\Big(
\Vert \varrho^\varepsilon \Vert_{L^\infty(0,T;L^\gamma(B_m))}^2
+
\Vert \varrho^\varepsilon \Vert_{C^{0,1}([0,T];W^{-1,\frac{2\gamma}{\gamma+1}}(B_m))}^2
\Big) \lesssim \frac{4}{a_m^2}
\end{align*}
hence the collection $\{\mathcal{L}[ \varrho^\varepsilon ] : \varepsilon \in (0,1)\}$ is tight on $\chi_{ \varrho}$.
\\
Next, to show that the collection $\{\mathcal{L}[ \varrho^\varepsilon \bu^\varepsilon ] : \varepsilon \in (0,1)\}$ is tight on $\chi_{ \varrho\bu}$, we first note that $\varrho^\varepsilon \bu^\varepsilon =\mathcal{P}(\varrho^\varepsilon \bu^\varepsilon) + \mathcal{Q}(\varrho^\varepsilon \bu^\varepsilon)$ where the gradient part of the momenta satisfy \eqref{gradMomZero}. On the other hand, by the continuity of $\mathcal{P}$, the second estimate of Lemma \ref{lem:momAndConvec} and Lemma \ref{lem:SoleMomen}, we can deduce that there exists a constant $a_m>0$ such that for any ball $B_m\subset \mathbb{R}^3$ of radius $m\in \mathbb{N}$, the solenoidal part of the momenta satisfy the following  estimates 
\begin{align*}
\mathbb{E}\bigg[\sup_{t\in[0,T]} \Vert \mathcal{P}(\varrho^\varepsilon \mathbf{u}^\varepsilon) \Vert_{L^\frac{2\gamma}{\gamma+1}(B_m)}   \bigg]^p
+
\mathbb{E} \left\Vert  \mathcal{P}\,( \varrho^\varepsilon\mathbf{u}^\varepsilon) \right\Vert^p_{C^\vartheta\left([0,T];W^{-l,2}(B_m)  \right)}
 \, 
\lesssim_p  a_m
\end{align*}
uniformly in $\varepsilon$. Tightness on $\chi_{ \varrho\bu}$ thus follow from the compact embedding
\begin{align}
\label{cptEmb}
L^\infty \big(0,T;L^\frac{2\gamma}{\gamma+1}(B_m) \big) \cap  C^\vartheta\big([0,T];W^{-l,2}(B_m)\big) \hookrightarrow 
C_w\big([0,T];L^\frac{2\gamma}{\gamma+1}(B_m)\big)
\end{align}
and the fact that $C_w\big([0,T];L^\frac{2\gamma}{\gamma+1}(B_m)\big)$ is contained in $\chi_{ \varrho\bu}$.
\\
Now, using the  first estimate of Lemma \ref{lem:veloAndSomething}, we can conclude that the collection $\{\mathcal{L}[ \bu^\varepsilon ] : \varepsilon \in (0,1)\}$ is tight on $\chi_{ \bu}$ and since $\mathcal{L}[W]$ is a Radon  measure on the Polish space $\chi_W$, it is tight. This completes the proof.
\end{proof}
With the tightness result, Lemma \ref{lem:tightDens} in hand, we can now conclude from the Jakubowski--Skorokhod representation  theorem \cite{jakubowski1998short}, the following result.
\begin{proposition}
\label{prop:Jakubow0}
Let $\beta>0$ be a constant such that for any $\delta>0$, we have $0<\beta<\frac{1}{2+\delta}$.
There exist a complete probability space $(\tilde{\Omega}, \tilde{\mathscr{F}}, \tilde{\mathbb{P}})$ with $\chi$-valued  random variables 
\begin{align*}
(\tilde{\varrho}, \tilde{\mathbf{U}}, \tilde{\mathbf{M}},  \tilde{W})\text{ and }  
(\tilde{\varrho}^\varepsilon, \tilde{\mathbf{U}}^\varepsilon,  \tilde{\mathbf{M}}^\varepsilon, \tilde{W}^\varepsilon)_{\varepsilon \in(0,1)}, 
\end{align*}
such that up to  subsequence (not relabelled)
\begin{enumerate}
\item for all $\varepsilon \in(0,1)$, the joint laws
\begin{align*}
\mathcal{L}[\tilde{\varrho}^\varepsilon, \tilde{\mathbf{U}}^\varepsilon,  \tilde{\mathbf{M}}^\varepsilon, \tilde{W}^\varepsilon]
\text{ and }  
\mathcal{L}[\varrho^\varepsilon,  \mathbf{u}^\varepsilon,  (\varrho^\varepsilon \mathbf{u}^\varepsilon),  W ]
\end{align*}
 coincide on  $\chi$;
\item the law $\mathcal{L}[\tilde{\varrho}, \tilde{\mathbf{U}}, \tilde{\mathbf{M}},  \tilde{W}]$
on $\chi$ is a Radon measure;
\item as  $\varepsilon\rightarrow 0$, we have that the following convergence
\begin{align}
\tilde{\varrho}^{\varepsilon}  \rightarrow \tilde{\varrho} &\quad\text{in}\quad \chi_\varrho
\label{almostSureDensity}
\\
\tilde{\mathbf{U}}^{\varepsilon} \rightarrow \tilde{\mathbf{U}}&\quad\text{in}\quad  \chi_\bu
\label{almostSurePVelocity}
\\
\tilde{\mathbf{M}}^{\varepsilon} \rightarrow \tilde{\mathbf{M}}&\quad\text{in}\quad  \chi_{\varrho
\bu}
\label{almostSurePMomentum}
\\
\tilde{W}^\varepsilon  \rightarrow \tilde{W} &\quad\text{in}\quad \chi_W
\label{almostSureWiener}
\end{align}
holds $\tilde{\mathbb{P}}$-a.s.
\end{enumerate}
\end{proposition}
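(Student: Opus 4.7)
The plan is to apply the Jakubowski--Skorokhod representation theorem \cite{jakubowski1998short} to the tight sequence of joint laws $\mathcal{L}[\varrho^\varepsilon, \mathbf{u}^\varepsilon, \varrho^\varepsilon\mathbf{u}^\varepsilon, W]$ on the product space $\chi = \chi_\varrho \times \chi_{\mathbf{u}} \times \chi_{\varrho\mathbf{u}} \times \chi_W$. Because $\chi_\varrho = C_w([0,T]; L^\gamma_{\mathrm{loc}}(\mathbb{R}^3))$ and $\chi_{\mathbf{u}} = (L^2(0,T; W^{1,2}(\mathbb{R}^3)), w)$ are not Polish, the classical Prokhorov--Skorokhod machinery is unavailable; however, each of the four factors is a quasi-Polish space in the sense of Jakubowski, that is, each admits a countable family of continuous real-valued functions separating points. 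For $\chi_W$ this is standard; for $\chi_{\mathbf{u}}$ (resp. $\chi_{\varrho\mathbf{u}}$) separation is provided by duality pairings against a countable dense subset of the predual; for $\chi_\varrho$ one uses pairings $\varrho \mapsto \int_{B_m}\varrho\,\varphi\,\mathrm{d}x$ with $\varphi$ ranging in a countable dense subset of $L^{\gamma'}(B_m)$, together with a countable family of times in $[0,T]$. A product of such spaces is again quasi-Polish, so the hypotheses of \cite{jakubowski1998short} are met.

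Next, combining Lemma \ref{lem:tightDens} with Jakubowski's theorem, one obtains a subsequence (not relabelled), a complete probability space $(\tilde{\Omega}, \tilde{\mathscr{F}}, \tilde{\mathbb{P}})$, and $\chi$-valued random variables $(\tilde{\varrho}^\varepsilon, \tilde{\mathbf{U}}^\varepsilon, \tilde{\mathbf{M}}^\varepsilon, \tilde{W}^\varepsilon)$ and $(\tilde{\varrho}, \tilde{\mathbf{U}}, \tilde{\mathbf{M}}, \tilde{W})$ whose joint laws agree with those of the original sequence and for which the almost sure convergences \eqref{almostSureDensity}--\eqref{almostSureWiener} hold in each factor topology. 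Part (1) follows by construction, and part (3) is a direct statement of the conclusion of the theorem.

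For part (2), one argues that the limit law $\mathcal{L}[\tilde\varrho, \tilde{\mathbf{U}}, \tilde{\mathbf{M}}, \tilde{W}]$ on $\chi$ is Radon. Since the sequence of laws is tight, for every $\eta>0$ there is a compact $K_\eta \subset \chi$ with $\mathcal{L}[\tilde\varrho^\varepsilon, \ldots](K_\eta) \geq 1-\eta$ uniformly in $\varepsilon$; passing to the limit along the a.s.\ convergent subsequence (and using that compacta in each factor are closed in the factor topology, whence $K_\eta$ is closed in $\chi$) yields the same lower bound for the limit law by the portmanteau-type implication, so the limit law is itself tight. Tightness on a quasi-Polish space together with Borel regularity then gives the Radon property, completing the proof.
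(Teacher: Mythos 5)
Your proposal is correct and follows essentially the same route as the paper, which deduces the proposition in one line from the tightness established in Lemma \ref{lem:tightDens} together with the Jakubowski--Skorokhod representation theorem \cite{jakubowski1998short}. The additional detail you supply (verifying that each factor of $\chi$ admits a countable separating family of continuous functionals, and that the limit law is Radon) is consistent with the standard hypotheses and conclusions of that theorem and does not change the argument.
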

A direct consequence of the equality of laws as established in Proposition \ref{prop:Jakubow0} is that
\begin{align}
\label{momEqDenVelo}
\tilde{\mathbf{M}}^\varepsilon =\tilde{\varrho}^\varepsilon\tilde{\mathbf{U}}^\varepsilon
\end{align}
holds $\tilde{\mathbb{P}}$-a.s. Also, in combination with Lemma \ref{lem:momAndConvec}, \eqref{gradVelZero} and \eqref{gradMomZero}, we can deduce that the following convergence
\begin{align}
\tilde{\varrho}^{\varepsilon}  \rightarrow 1 &\quad\text{in}\quad C_w \big([0,T];L^{\gamma}_{\mathrm{loc}}(\mathbb{R}^3 )\big),
\label{almostSureDensityA}
\\
\mathcal{Q}( \tilde{\mathbf{U}}^\varepsilon) \rightarrow 0 &\quad \text{in} \quad  L^2\big((0,T) \times\mathbb{R}^3\big),
\label{almostSurePVelocityA}
\\
\mathcal{Q}(\tilde{\mathbf{M}}^{\varepsilon}) \rightarrow 0&\quad\text{in}\quad  L^2\big(0,T;L^{\frac{2\gamma}{\gamma+1}}_{\mathrm{loc}}(\mathbb{R}^3 ) \big),
\label{almostSurePGradMomentumA}
\end{align}
holds $\tilde{\mathbb{P}}$-a.s. as $\varepsilon \rightarrow0$.
Furthermore, we can show the following result.
\begin{lemma}
\label{lem:lastLem}
Let $\beta>0$ be a constant such that for any $\delta>0$, we have $0<\beta<\frac{1}{2+\delta}$. The following convergence
\begin{align}
\mathcal{P}(\tilde{\mathbf{M}}^{\varepsilon}) \rightarrow \tilde{\mathbf{M}}&\quad\text{in}\quad  L^2\big(0,T;L^{\frac{2\gamma}{\gamma+1}}_{\mathrm{loc}}(\mathbb{R}^3 ) \big),
\label{almostSurePSolMomentumA}
\\
\mathcal{P}(\tilde{\mathbf{U}}^{\varepsilon}) \rightarrow \tilde{\mathbf{M}}&\quad\text{in}\quad  L^2\big(0,T;L^{\frac{2\gamma}{\gamma+1}}_{\mathrm{loc}}(\mathbb{R}^3 ) \big)
\label{almostSurePSolVelocityA}
\end{align}
holds $\tilde{\mathbb{P}}$-a.s. as $\varepsilon \rightarrow0$ and finally,
\begin{align}
\label{momEqVelo}
\tilde{\mathbf{U}}=\tilde{\mathbf{M}}
\end{align}
holds $\tilde{\mathbb{P}}$-a.s.
\end{lemma}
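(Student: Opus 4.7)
The three assertions are handled in cascade, with the first two reducing to the quantitative decay estimates already established and the third following from a combined weak–strong limit identification.

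\textbf{Step 1: proof of \eqref{almostSurePSolMomentumA}.} I decompose via Helmholtz,
\[
\mathcal{P}(\tilde{\mathbf{M}}^\varepsilon) \;=\; \tilde{\mathbf{M}}^\varepsilon - \mathcal{Q}(\tilde{\mathbf{M}}^\varepsilon).
\]
By \eqref{almostSurePMomentum}, $\tilde{\mathbf{M}}^\varepsilon \to \tilde{\mathbf{M}}$ $\tilde{\mathbb{P}}$-a.s.\ in $\chi_{\varrho\bu} = L^2(0,T;L^{\frac{2\gamma}{\gamma+1}}_{\mathrm{loc}}(\mathbb{R}^3))$, while \eqref{almostSurePGradMomentumA} gives $\mathcal{Q}(\tilde{\mathbf{M}}^\varepsilon) \to 0$ $\tilde{\mathbb{P}}$-a.s.\ in the same space. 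Adding the two yields \eqref{almostSurePSolMomentumA}, and as a by-product shows that $\tilde{\mathbf{M}}$ is solenoidal $\tilde{\mathbb{P}}$-a.s.

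\textbf{Step 2: proof of \eqref{almostSurePSolVelocityA}.} Using \eqref{momEqDenVelo} and $\tilde{\varrho}^\varepsilon = 1 + \varepsilon\tilde{\sigma}^\varepsilon$, I write the algebraic identity
\[
\tilde{\mathbf{U}}^\varepsilon \;=\; \tilde{\mathbf{M}}^\varepsilon - (\tilde{\varrho}^\varepsilon - 1)\tilde{\mathbf{U}}^\varepsilon \;=\; \tilde{\mathbf{M}}^\varepsilon - \varepsilon\,\tilde{\sigma}^\varepsilon \tilde{\mathbf{U}}^\varepsilon,
\]
so that $\mathcal{P}\tilde{\mathbf{U}}^\varepsilon = \mathcal{P}\tilde{\mathbf{M}}^\varepsilon - \varepsilon\,\mathcal{P}(\tilde{\sigma}^\varepsilon \tilde{\mathbf{U}}^\varepsilon)$. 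By equality of laws, the bounds \eqref{plusEst1}--\eqref{plusEst2} obtained in the proof of Lemma \ref{lem:veloAndSomething} transfer to $\tilde{\sigma}^\varepsilon \tilde{\mathbf{U}}^\varepsilon$, giving
\[
\tilde{\mathbb{E}}\bigg[\int_0^T \Vert \tilde{\sigma}^\varepsilon \tilde{\mathbf{U}}^\varepsilon \Vert_{L^{\frac{2\gamma}{\gamma+1}}(\mathbb{R}^3)}^2\,\dd t\bigg]^p \lesssim_p 1
\]
uniformly in $\varepsilon$. The continuity of $\mathcal{P}$ then forces $\varepsilon\,\mathcal{P}(\tilde{\sigma}^\varepsilon \tilde{\mathbf{U}}^\varepsilon) \to 0$ in $L^{2p}(\tilde{\Omega}; L^2(0,T;L^{\frac{2\gamma}{\gamma+1}}(\mathbb{R}^3)))$ and hence in $\tilde{\mathbb{P}}$-probability; passing if necessary to a further subsequence (absorbed by a diagonal argument into the one produced by Proposition \ref{prop:Jakubow0}), the convergence upgrades to $\tilde{\mathbb{P}}$-a.s.\ in $L^2(0,T;L^{\frac{2\gamma}{\gamma+1}}_{\mathrm{loc}}(\mathbb{R}^3))$. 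Combining with Step 1 yields \eqref{almostSurePSolVelocityA}. This passage from mean to almost-sure convergence is the only delicate technical point in the proof.

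\textbf{Step 3: proof of \eqref{momEqVelo}.} Two limits for $\mathcal{P}\tilde{\mathbf{U}}^\varepsilon$ are available: Step 2 yields strong convergence to $\tilde{\mathbf{M}}$ in $L^2(0,T;L^{\frac{2\gamma}{\gamma+1}}_{\mathrm{loc}}(\mathbb{R}^3))$, while \eqref{almostSurePVelocity} and the bounded linearity (hence weak-to-weak continuity) of $\mathcal{P}$ give $\mathcal{P}\tilde{\mathbf{U}}^\varepsilon \rightharpoonup \mathcal{P}\tilde{\mathbf{U}}$ weakly in $L^2(0,T;W^{1,2}(\mathbb{R}^3))$. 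Moreover \eqref{almostSurePVelocityA} implies $\mathcal{Q}\tilde{\mathbf{U}} = 0$ $\tilde{\mathbb{P}}$-a.s., so $\mathcal{P}\tilde{\mathbf{U}} = \tilde{\mathbf{U}}$. Both limits are identified distributionally against the same class of test functions $\bm{\phi} \in C^\infty_c(\mathbb{R}^3)$, so uniqueness of distributional limits forces $\tilde{\mathbf{U}} = \tilde{\mathbf{M}}$ $\tilde{\mathbb{P}}$-a.s., which is \eqref{momEqVelo}.
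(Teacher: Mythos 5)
Your proof is correct and, for the first two assertions, essentially identical to the paper's: \eqref{almostSurePSolMomentumA} is obtained by subtracting \eqref{almostSurePGradMomentumA} from \eqref{almostSurePMomentum}, and \eqref{almostSurePSolVelocityA} by the same algebraic decomposition the paper uses (the paper writes $\mathcal{P}\tilde{\mathbf{U}}^\varepsilon = \mathcal{Q}(\tilde{\varrho}^\varepsilon\tilde{\mathbf{U}}^\varepsilon)+\mathcal{P}(\tilde{\varrho}^\varepsilon\tilde{\mathbf{U}}^\varepsilon)-\mathcal{Q}(\tilde{\mathbf{U}}^\varepsilon)-\varepsilon\tilde{\sigma}^\varepsilon\tilde{\mathbf{U}}^\varepsilon$ and kills the last term via \eqref{plusEst1}--\eqref{plusEst2}, rather than applying $\mathcal{P}$ to $\varepsilon\tilde{\sigma}^\varepsilon\tilde{\mathbf{U}}^\varepsilon$ as you do, but this is the same estimate; your explicit remark that the moment bound only yields a.s.\ convergence along a further subsequence is a point the paper glosses over, and is legitimate since Proposition \ref{prop:Jakubow0} already works up to subsequences). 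The only genuine divergence is in Step 3: the paper identifies $\tilde{\mathbf{M}}$ with $\tilde{\mathbf{U}}$ by passing to the limit in the product $\tilde{\varrho}^\varepsilon\tilde{\mathbf{U}}^\varepsilon\rightharpoonup 1\cdot\tilde{\mathbf{U}}$ in $L^1(0,T;L^1_{\mathrm{loc}})$ (strong convergence of the density times weak convergence of the velocity) and comparing with \eqref{almostSurePMomentum}, whereas you compare the strong limit of $\mathcal{P}\tilde{\mathbf{U}}^\varepsilon$ from Step 2 with its weak limit $\mathcal{P}\tilde{\mathbf{U}}=\tilde{\mathbf{U}}$ obtained from \eqref{almostSurePVelocity} and the vanishing of $\mathcal{Q}\tilde{\mathbf{U}}$. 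Both are one-line uniqueness-of-distributional-limits arguments; yours has the small advantage of not invoking the product of a strong and a weak limit, at the cost of needing weak-to-weak continuity of $\mathcal{P}$ on $L^2(0,T;W^{1,2})$, which is standard.
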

\begin{proof}
The first result \eqref{almostSurePSolMomentumA} is an immediate consequence of \eqref{almostSurePMomentum} and  \eqref{almostSurePGradMomentumA}. The second result \eqref{almostSurePSolVelocityA} is however slightly trickier since $\chi_\bu$ in \eqref{almostSurePVelocity} is only endowed with the weak topology. Nevertheless, since equality of laws holds true, by observing that
\begin{align*}
\mathcal{P}(\tilde{\mathbf{U}}^{\varepsilon} ) = \mathcal{Q}(\tilde{\varrho}^\varepsilon\tilde{\mathbf{U}}^{\varepsilon}) + \mathcal{P}(\tilde{\varrho}^\varepsilon\tilde{\mathbf{U}}^{\varepsilon}
)- \mathcal{Q}(\tilde{\mathbf{U}}^{\varepsilon} )  - \varepsilon \,\tilde{\sigma}^\varepsilon\tilde{\mathbf{U}}^{\varepsilon} =:I_1 +I_2 +I_3+I_4
\end{align*}
where $\tilde{\sigma}^\varepsilon=\frac{\tilde{\varrho}^\varepsilon-1}{\varepsilon}$, we deduce from \eqref{plusEst1}--\eqref{plusEst2} (noting that  $\frac{2\gamma}{\gamma+1} < \frac{4}{3}$ when $\gamma<2$) that
\begin{align}
I_4 \rightarrow 0 &\quad\text{in}\quad  L^2\big(0,T;L^{\frac{2\gamma}{\gamma+1}}_{\mathrm{loc}}(\mathbb{R}^3 ) \big)
\end{align}
holds $\tilde{\mathbb{P}}$-a.s. as $\varepsilon \rightarrow0$. The convergence \eqref{almostSurePSolVelocityA} therefore follow from 
 \eqref{momEqDenVelo}, \eqref{almostSurePVelocityA}, \eqref{almostSurePGradMomentumA} and  \eqref{almostSurePSolMomentumA}. Finally, \eqref{momEqVelo} follow from \eqref{almostSurePMomentum} and \eqref{momEqDenVelo} as well as the fact that from \eqref{almostSurePVelocity} \eqref{almostSureDensityA}, one can deduce that
\begin{align}
\tilde{\varrho}^\varepsilon\tilde{\mathbf{U}}^{\varepsilon} \rightharpoonup \tilde{\mathbf{U}}&\quad\text{in}\quad  L^1\big(0,T;L^1_{\mathrm{loc}}(\mathbb{R}^3 ) \big)
\end{align}
holds $\tilde{\mathbb{P}}$-a.s.
\end{proof}
\section{Identification of the limit system}
\label{sec:limitWholeSpace}
On the new probability space $(\tilde{\Omega}, \tilde{\mathscr{F}}, \tilde{\mathbb{P}})$,  it follows from \cite[Theorem 2.9.1]{breit2018stoch} that $\tilde{W}^\varepsilon=\sum_{k\in \mathbb{N}} e_k \tilde{\beta}_k^\varepsilon$ is a cylindrical Wiener process with respect to the following filtration
\begin{align*}
\tilde{\mathscr{F}}_t^\varepsilon= \sigma\left( \sigma_t[\tilde{\varrho}^\varepsilon] \cup \sigma_t[\tilde{\mathbf{U}}^\varepsilon] \cup \bigcup_{k\in \mathbb{N}} \sigma_t[\tilde{\beta}_k^\varepsilon])   \right), \quad t\in[0,T]
\end{align*}
for all $\varepsilon>0$, thus, $\tilde{\mathscr{F}}_t^\varepsilon$ is non-anticipative with respect to $\tilde{W}^\varepsilon$. By using \cite[Lemma  2.9.3]{breit2018stoch} and Proposition \ref{prop:Jakubow0}, we can pass to the limit $\varepsilon \rightarrow0$ to conclude that
\begin{align*}
\tilde{\mathscr{F}}_t &= \sigma\left( \sigma_t[\tilde{\varrho}] \cup \sigma_t[\tilde{\mathbf{U}}] \cup \bigcup_{k\in \mathbb{N}} \sigma_t[\tilde{\beta}_k])   \right).
\end{align*} 
is non-anticipative with respect to $\tilde{W}$. It therefore follow that $\tilde{W}$ is a cylindrical Wiener process with respect to $(\tilde{\mathscr{F}}_t )_{t\geq0}$ by virtue of \cite[Lemma 2.1.35, Corollary 2.1.36]{breit2018stoch}.\\
Furthermore, due to the equality of laws from Proposition \ref{prop:Jakubow0}, in combination with \cite[Theorem 2.9.1]{breit2018stoch} (see also \cite[Theorem 2.4.31]{mensah2019theses}), we can conclude with the following.
\begin{corollary}
\label{cor:tildeWeakSol}
Let $\varepsilon>0$ and $\varepsilon^\beta \Delta \tilde{V}^{\varepsilon} = \tilde{\varrho}^\varepsilon -1$ where $\beta>0$. Then
$
[(\tilde{\Omega} , \tilde{\mathscr{F}} , (\tilde{\mathscr{F}}_t^\varepsilon)_{t\geq0}, \tilde{\mathbb{P}} ) , \tilde{\varrho}^{\varepsilon} , \tilde{\mathbf{U}}^{\varepsilon}, \tilde{V}^{\varepsilon}, \tilde{W}^\varepsilon]
$
is a finite energy weak martingale solution of \eqref{contEqB}--\eqref{elecFieldB} in the sense of Definition \ref{def:martSolutionExis} with initial law $\tilde{\Lambda}^\varepsilon$.
\end{corollary}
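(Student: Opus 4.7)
The plan is to verify each of the five items in Definition \ref{def:martSolutionExis} for the tilde quantities by transferring the corresponding properties from the original probability space via the equality of joint laws granted by Proposition \ref{prop:Jakubow0}(i). Items (1) and (2) (stochastic basis with cylindrical Wiener process, and adaptedness of $(\tilde{\varrho}^\varepsilon,\tilde{\mathbf{U}}^\varepsilon,\tilde{V}^\varepsilon)$ to $(\tilde{\mathscr{F}}_t^\varepsilon)$) are essentially free: the filtration $(\tilde{\mathscr{F}}_t^\varepsilon)$ is by construction the augmented canonical filtration generated by the tilde variables, so adaptedness is automatic, and that $\tilde{W}^\varepsilon$ is a cylindrical Wiener process relative to this filtration is the content of the paragraph immediately preceding the corollary (based on \cite[Theorem 2.9.1]{breit2018stoch} together with \cite[Lemma 2.1.35, Corollary 2.1.36]{breit2018stoch}). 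For item (3), the initial law $\tilde{\Lambda}^\varepsilon$ is defined so that $\tilde{\Lambda}^\varepsilon=\tilde{\mathbb{P}}\circ(\tilde{\varrho}^\varepsilon(0),\tilde{\varrho}^\varepsilon(0)\tilde{\mathbf{U}}^\varepsilon(0))^{-1}$, which by equality of joint laws coincides with $\Lambda^\varepsilon$; the Poisson relation $\varepsilon^\beta\Delta\tilde{V}^\varepsilon=\tilde{\varrho}^\varepsilon-1$ is imposed by hypothesis of the corollary.

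The substance is in items (4) and (5). For the distributional identity \eqref{eq:distributionalSol}, I would introduce, for fixed test functions $\psi,\bm\phi\in C^\infty_c(\mathbb{R}^3)$, the functionals
\begin{align*}
\mathcal{M}^{1,\varepsilon}_t[\varrho,\mathbf{u}]&:=\int_{\mathbb{R}^3}\big(\varrho(t)-\varrho(0)\big)\psi\,\mathrm{d}x-\int_0^t\!\int_{\mathbb{R}^3}\varrho\mathbf{u}\cdot\nabla\psi\,\mathrm{d}x\,\mathrm{d}s,\\
\mathcal{M}^{2,\varepsilon}_t[\varrho,\mathbf{u},V]&:=\int_{\mathbb{R}^3}\big(\varrho\mathbf{u}(t)-\varrho\mathbf{u}(0)\big)\cdot\bm\phi\,\mathrm{d}x-\int_0^t\!\int_{\mathbb{R}^3}\big(\varrho\mathbf{u}\otimes\mathbf{u}:\nabla\bm\phi+\cdots\big)\,\mathrm{d}x\,\mathrm{d}s,
\end{align*}
and observe that on the original space $\mathcal{M}^{1,\varepsilon}_t=0$ a.s. and $\mathcal{M}^{2,\varepsilon}_t=\int_0^t\!\int_{\mathbb{R}^3}\mathbf{G}(\varrho^\varepsilon,\varrho^\varepsilon\mathbf{u}^\varepsilon)\cdot\bm\phi\,\mathrm{d}x\,\mathrm{d}W^\varepsilon$. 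Since each of these is a Borel measurable functional on $\chi$ (up to standard care with the stochastic integral), the equality of joint laws transfers $\mathcal{M}^{1,\varepsilon}_t[\tilde{\varrho}^\varepsilon,\tilde{\mathbf{U}}^\varepsilon]=0$ to the tilde variables, and the martingale identification argument of \cite[Theorem 2.9.1]{breit2018stoch} identifies the tilde analogue of $\mathcal{M}^{2,\varepsilon}$ with $\int_0^t\!\int_{\mathbb{R}^3}\mathbf{G}(\tilde{\varrho}^\varepsilon,\tilde{\varrho}^\varepsilon\tilde{\mathbf{U}}^\varepsilon)\cdot\bm\phi\,\mathrm{d}x\,\mathrm{d}\tilde{W}^\varepsilon$. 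This is the standard route used in \cite{breit2018stoch,mensah2019theses}.

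The energy inequality \eqref{augmentedEnergy} is handled along the same lines but is the step where I expect the principal technical obstacle: the deterministic part of \eqref{augmentedEnergy} is a lower semicontinuous functional on $\chi$ taking values in $[0,\infty]$, and equality of laws preserves $\mathbb{P}$-a.s. inequalities for such functionals; the stochastic corrector $\int_0^t\int_{\mathbb{R}^3}\tilde{\mathbf{U}}^\varepsilon\cdot\mathbf{G}(\tilde{\varrho}^\varepsilon,\tilde{\varrho}^\varepsilon\tilde{\mathbf{U}}^\varepsilon)\,\mathrm{d}x\,\mathrm{d}\tilde{W}^\varepsilon$ requires the same martingale identification argument as above, after truncating in time with a suitable countable collection of a.e.\ continuity points to cope with the ``for a.e.\ $t$'' clause. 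The It\^o correction term $\tfrac12\int(\varrho^\varepsilon)^{-1}\sum_k|\mathbf{g}_k|^2$ is a Borel functional of $(\tilde\varrho^\varepsilon,\tilde{\mathbf{U}}^\varepsilon)$, so it transfers directly.

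Putting the four verified items (1)--(3) and (4)--(5) together yields that $[(\tilde\Omega,\tilde{\mathscr{F}},(\tilde{\mathscr{F}}_t^\varepsilon),\tilde{\mathbb{P}});\tilde{\varrho}^\varepsilon,\tilde{\mathbf{U}}^\varepsilon,\tilde{V}^\varepsilon,\tilde{W}^\varepsilon]$ is a finite energy weak martingale solution of \eqref{contEqB}--\eqref{elecFieldB} with initial law $\tilde\Lambda^\varepsilon$. The only non-routine input is the stochastic integral identification, and this is precisely the content of \cite[Theorem 2.9.1]{breit2018stoch} invoked in the corollary.
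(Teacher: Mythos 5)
Your proposal is correct and follows essentially the same route as the paper, which simply invokes the equality of joint laws from Proposition \ref{prop:Jakubow0} together with the transfer/identification theorem \cite[Theorem 2.9.1]{breit2018stoch}; you have merely unpacked what that theorem does (Borel functionals of the path variables transfer by equality of laws, the stochastic integral is identified via the martingale characterisation, and $\tilde{V}^\varepsilon$ is slaved to $\tilde{\varrho}^\varepsilon$ through the Poisson equation). No gap.
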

We now have all the tools to identify the limit in the weak formulation of our system. We will only show in detail, how to deal with  the deterministic part of the momentum equation since  the identification of the continuity equation is quite standard and can be found in  \cite{breit2015incompressible} and \cite{mensah2016existence} while for  the stochastic integral, we will show  just the main steps.

\begin{lemma}
Let $\beta>0$ be a constant such that for any $\delta>0$, we have $0<\beta<\frac{1}{2+\delta}$, for all $t\in [0,T]$ and $\bm{\phi} \in C^\infty_{c, \mathrm{div}}(\mathbb{R}^3)$, let
\begin{align*}
M(\varrho, \bu, V)_t &:=\int_{\mathbb{R}^3}\varrho \bu(t) \cdot \bm{\phi}\dx
-
\int_{\mathbb{R}^3}\varrho\bu(0) \cdot \bm{\phi}\dx
-
\int_0^t\int_{\mathbb{R}^3}\varrho \bu \otimes \bu : \nabla \bm{\phi}\dx \mathrm{d}s
\\&+
\nu_1
\int_0^t\int_{\mathbb{R}^3}\nabla\bu :\nabla \bm{\phi}\dx \mathrm{d}s
-
\int_0^t\int_{\mathbb{R}^3}\bigg(\frac{\varrho -1}{\varepsilon^2}\nabla V\bigg) \cdot \bm{\phi}\dx \mathrm{d}s.
\end{align*}
Then $M(\tilde{\varrho}^\varepsilon, \tilde{\mathbf{U}}^\varepsilon, \tilde{V}^\varepsilon)_t \rightarrow M(1, \tilde{\mathbf{U}}, \tilde{V})_t$ $\tilde{\mathbb{P}}$-a.s. as $ \varepsilon \rightarrow 0$. 
\end{lemma}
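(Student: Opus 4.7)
The plan is to split $M(\tilde{\varrho}^\varepsilon,\tilde{\mathbf{U}}^\varepsilon,\tilde{V}^\varepsilon)_t$ into its five summands and pass to the limit in each, extracting a $\tilde{\mathbb{P}}$-a.s.\ convergent subsequence along the way. For the two pointwise-in-time evaluation terms $\int \tilde{\varrho}^\varepsilon\tilde{\mathbf{U}}^\varepsilon(s)\cdot\bm{\phi}\,\dx$ at $s=0$ and $s=t$, I will use that $\bm{\phi}\in C^\infty_{c,\mathrm{div}}(\mathbb{R}^3)$ to replace the integrand by $\mathcal{P}(\tilde{\varrho}^\varepsilon\tilde{\mathbf{U}}^\varepsilon)\cdot\bm{\phi}=\mathcal{P}(\tilde{\mathbf{M}}^\varepsilon)\cdot\bm{\phi}$; the compact embedding \eqref{cptEmb} invoked in the proof of Lemma \ref{lem:tightDens} (together with the bounds of Lemma \ref{lem:SoleMomen} and Lemma \ref{lem:momAndConvec}, which transfer to the new probability space by equality of laws) guarantees that $\mathcal{P}(\tilde{\mathbf{M}}^\varepsilon)$ belongs to $C_w([0,T];L^{2\gamma/(\gamma+1)}_{\mathrm{loc}}(\mathbb{R}^3))$ and converges there to $\mathcal{P}(\tilde{\mathbf{M}})=\tilde{\mathbf{U}}$ thanks to \eqref{almostSurePSolMomentumA} and \eqref{momEqVelo}; this gives pointwise-in-time convergence at the fixed $s\in\{0,t\}$. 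The viscous term $\nu_1\int_0^t\int\nabla\tilde{\mathbf{U}}^\varepsilon:\nabla\bm{\phi}\,\dx\,\mathrm{d}s$ passes to the limit at once from the weak convergence \eqref{almostSurePVelocity} in $L^2(0,T;W^{1,2}(\mathbb{R}^3))$.

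For the convective term I will write $\tilde{\varrho}^\varepsilon\tilde{\mathbf{U}}^\varepsilon\otimes\tilde{\mathbf{U}}^\varepsilon=\tilde{\mathbf{M}}^\varepsilon\otimes\tilde{\mathbf{U}}^\varepsilon$ and combine the strong convergence $\tilde{\mathbf{M}}^\varepsilon\to\tilde{\mathbf{U}}$ in $L^2(0,T;L^{2\gamma/(\gamma+1)}_{\mathrm{loc}}(\mathbb{R}^3))$ (from \eqref{almostSurePMomentum} and \eqref{momEqVelo}) with the weak convergence $\tilde{\mathbf{U}}^\varepsilon\rightharpoonup\tilde{\mathbf{U}}$ in $L^2(0,T;W^{1,2}(\mathbb{R}^3))$ and the Sobolev embedding $W^{1,2}\hookrightarrow L^6$. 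Interpolating against the strong convergence \eqref{almostSurePSolVelocityA}--\eqref{almostSurePVelocityA} of $\mathcal{P}\tilde{\mathbf{U}}^\varepsilon$ and $\mathcal{Q}\tilde{\mathbf{U}}^\varepsilon$, I get strong $L^2_t L^r_{\mathrm{loc}}$ convergence of $\tilde{\mathbf{U}}^\varepsilon$ for a suitable $r>2\gamma/(\gamma+1)$, which together with the strong convergence of $\tilde{\mathbf{M}}^\varepsilon$ is enough to pass to the limit in the smooth compactly supported test $\nabla\bm{\phi}$.

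The genuinely hard step is the electric field contribution
\begin{equation*}
I^\varepsilon:=\int_0^t\int_{\mathbb{R}^3} \varepsilon^{-1}\tilde{\sigma}^\varepsilon\,\nabla \tilde{V}^\varepsilon\cdot\bm{\phi}\,\dx\,\mathrm{d}s,
\end{equation*}
which must be shown to vanish because the limit $M(1,\tilde{\mathbf{U}},\tilde{V})_t$ contains no such term. Naive scaling is useless: the energy bound \eqref{energyEst1} only gives $\|\varepsilon^{(\beta-2)/2}\nabla\tilde{V}^\varepsilon\|_{L^\infty_tL^2_x}\lesssim 1$, so $I^\varepsilon$ is merely $O(1)$. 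Following the deterministic identification strategy of \cite{donatelli2008quasineutral}, I will invoke the identity \eqref{idenVectorCalNext} to split $\varepsilon^{-1}\tilde{\sigma}^\varepsilon\nabla\tilde{V}^\varepsilon$ into a curl-free piece (which vanishes against $\bm{\phi}$ because $\mathrm{div}\,\bm{\phi}=0$) and the stress term $\varepsilon^{\beta-2}\mathrm{div}(\nabla\tilde{V}^\varepsilon\otimes\nabla\tilde{V}^\varepsilon)$, reducing $I^\varepsilon$ to
\begin{equation*}
I^\varepsilon=-\varepsilon^{\beta-2}\int_0^t\int_{\mathbb{R}^3}\nabla\tilde{V}^\varepsilon\otimes\nabla\tilde{V}^\varepsilon:\nabla\bm{\phi}\,\dx\,\mathrm{d}s.
\end{equation*}
I then use the Poisson equation $\nabla\tilde{V}^\varepsilon=\varepsilon^{1-\beta}\nabla\Delta^{-1}\tilde{\sigma}^\varepsilon$ together with the dispersive estimate underlying Lemma \ref{lem:GradMomen}: the same mollification and mild-solution analysis that produced \eqref{schrit5} yields $\mathbb{E}\|[\tilde{\sigma}^\varepsilon]_\kappa\|_{L^2((0,T)\times\mathbb{R}^3)}^2\lesssim\varepsilon^{1-(2+\delta)\beta}$ for $\kappa=\varepsilon^{2\delta\beta/5}$. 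Decomposing $\nabla\tilde{V}^\varepsilon$ into its mollified part (which vanishes in $L^2_tL^2_{\mathrm{loc}}$ at the Klein--Gordon rate just stated) and the high-frequency remainder (controlled uniformly by \eqref{energyEst1} and estimated by \eqref{mollifierA}), and then closing with the $L^\infty_tL^2_x$ bound on $\varepsilon^{(\beta-2)/2}\nabla\tilde{V}^\varepsilon$ in a H\"older pairing, I will conclude $I^\varepsilon\to0$ in $L^1(\tilde{\Omega})$ precisely when $\beta<1/(2+\delta)$. The delicate point is balancing the $\varepsilon^{-\beta}$ prefactor left behind by the Poisson inversion against the acoustic-dispersive rate, so the admissible range of $\beta$ is dictated exactly by this term.
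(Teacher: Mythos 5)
Your treatment of the first four terms matches the paper's in substance (the paper organizes the convective term through the decomposition $\tilde{\varrho}^\varepsilon \tilde{\mathbf{U}}^\varepsilon \otimes \tilde{\mathbf{U}}^\varepsilon=\mathcal{P}\tilde{\mathbf{U}}^\varepsilon\otimes\mathcal{P}\tilde{\mathbf{U}}^\varepsilon+(\tilde{\varrho}^\varepsilon-1)\tilde{\mathbf{U}}^\varepsilon\otimes\tilde{\mathbf{U}}^\varepsilon+\cdots$ rather than $\tilde{\mathbf{M}}^\varepsilon\otimes\tilde{\mathbf{U}}^\varepsilon$, but the inputs --- \eqref{almostSureDensityA}, \eqref{almostSurePVelocityA}, \eqref{almostSurePSolVelocityA}, \eqref{momEqVelo} --- are the same). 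The divergence is in the electric-field term, and there your argument has a genuine gap.

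The paper does \emph{not} invoke the identity \eqref{idenVectorCalNext} at this stage; it splits into the cases $\gamma\in(\tfrac32,2)$ and $\gamma\geq2$. For $\gamma\geq2$ it pairs $\varepsilon^{-1}\tilde{\sigma}^\varepsilon$ directly against $\nabla\tilde{V}^\varepsilon\cdot\bm{\phi}$ in a $W^{-1,q}$--$W^{1,q'}$ duality, using \eqref{epsPlusEps1} and the acoustic bound \eqref{oneMinusTwoBeta} on $\Vert\tilde{\sigma}^\varepsilon\Vert_{L^2}$; your reduction to $\varepsilon^{\beta-2}\int\nabla\tilde{V}^\varepsilon\otimes\nabla\tilde{V}^\varepsilon:\nabla\bm{\phi}$ can be closed along the same lines in that regime, so there the two routes are reorganizations of one estimate. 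But every ingredient you rely on --- the transfer of the Klein--Gordon rate \eqref{schrit5} to $\tilde{\sigma}^\varepsilon$, the control of the mollification remainder $\tilde{\sigma}^\varepsilon-[\tilde{\sigma}^\varepsilon]_\kappa$, and the elliptic bound $\Vert\Delta\tilde{V}^\varepsilon\Vert_{L^2}\lesssim\varepsilon^{1-\beta}\Vert\tilde{\sigma}^\varepsilon\Vert_{L^2}$ needed to beat the $O(1)$ energy scaling --- requires $\tilde{\sigma}^\varepsilon$ to be uniformly bounded in $L^2(\mathbb{R}^3)$, which by \eqref{orlizSigmaNot} holds only when $\gamma\geq2$. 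For $\gamma\in(\tfrac32,2)$ one only has the Orlicz bound \eqref{orlizSigma}, and your scheme does not close: the available regularity gives at best $\Vert\nabla\tilde{V}^\varepsilon\Vert_{L^2_{\mathrm{loc}}}\lesssim\varepsilon^{1-\beta}$, leaving $\varepsilon^{\beta-2}\cdot\varepsilon^{(2-\beta)/2}\cdot\varepsilon^{1-\beta}=\varepsilon^{-\beta/2}$, which diverges. The paper handles this range by a separate, purely H\"older argument exploiting $\sup_t\Vert\tilde{\varrho}^\varepsilon-1\Vert_{L^\gamma}\lesssim\varepsilon^\gamma$, with no acoustic input, yielding the condition $\beta<2(\gamma-1)$ in \eqref{betaGammaMinusOne}. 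Relatedly, your claim that the term vanishes ``precisely when $\beta<1/(2+\delta)$'' is not what the computation gives: for $\gamma\geq2$ the rate is $\varepsilon^{1-\beta(3+\delta)}$, i.e.\ the condition is $\beta<1/(3+\delta)$ as in \eqref{betaGammaMinusOneb}, and this is compatible with the hypothesis only because the latter is quantified over all $\delta>0$.
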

\begin{proof}
To begin with, we use \eqref{almostSurePVelocity} and \eqref{almostSureDensityA} to conclude that
for all $t\in [0,T]$ and $\bm{\phi} \in C^\infty_{c, \mathrm{div}}(\mathbb{R}^3)$,
\begin{align*}
&\int_{\mathbb{R}^3}\tilde{\varrho}^\varepsilon \tilde{\mathbf{U}}^\varepsilon(t) \cdot \bm{\phi}\dx
-
\int_{\mathbb{R}^3}\tilde{\varrho}^\varepsilon \tilde{\mathbf{U}}^\varepsilon(0) \cdot \bm{\phi}\dx
+
\nu_1
\int_0^t\int_{\mathbb{R}^3}\nabla \tilde{\mathbf{U}}^\varepsilon  : \nabla \bm{\phi}\dx \mathrm{d}s
\\&\rightarrow
\int_{\mathbb{R}^3}\tilde{\varrho} \tilde{\mathbf{U}}(t) \cdot \bm{\phi}\dx
-
\int_{\mathbb{R}^3}\tilde{\varrho}\tilde{\mathbf{U}}(0) \cdot \bm{\phi}\dx
+
\nu_1
\int_0^t\int_{\mathbb{R}^3} \nabla \tilde{\mathbf{U}}  : \nabla \bm{\phi}\dx \mathrm{d}s
\end{align*}
$\tilde{\mathbb{P}}$-a.s. as $\varepsilon \rightarrow 0$.
Now since we have 
\begin{align*}
\tilde{\varrho}^\varepsilon \tilde{\mathbf{U}}^\varepsilon \otimes \tilde{\mathbf{U}}^\varepsilon =\mathcal{P}\tilde{\mathbf{U}}^\varepsilon \otimes \mathcal{P}\tilde{\mathbf{U}}^\varepsilon +  (\tilde{\varrho}^\varepsilon-1)\tilde{\mathbf{U}}^\varepsilon\otimes \tilde{\mathbf{U}}^\varepsilon + \mathcal{P}\tilde{\mathbf{U}}^\varepsilon \otimes \mathcal{Q}\tilde{\mathbf{U}}^\varepsilon + \mathcal{Q}\tilde{\mathbf{U}}^\varepsilon \otimes \mathcal{P}\tilde{\mathbf{U}}^\varepsilon + \mathcal{Q}\tilde{\mathbf{U}}^\varepsilon \otimes \mathcal{Q} \tilde{\mathbf{U}}^\varepsilon,
\end{align*}
it follows from \eqref{almostSureDensityA}, \eqref{almostSurePVelocityA}, \eqref{almostSurePSolVelocityA} and \eqref{momEqVelo} that for all $t\in [0,T]$ and $\bm{\phi} \in C^\infty_{c, \mathrm{div}}(\mathbb{R}^3)$,
\begin{align*}
\int_0^t\int_{\mathbb{R}^3}\tilde{\varrho}^\varepsilon \tilde{\mathbf{U}}^\varepsilon \otimes \tilde{\mathbf{U}}^\varepsilon  : \nabla \bm{\phi}\dx \mathrm{d}s
\rightarrow
\int_0^t\int_{\mathbb{R}^3} \tilde{\mathbf{U}} \otimes \tilde{\mathbf{U}}  : \nabla \bm{\phi}\dx \mathrm{d}s
\end{align*}
$\tilde{\mathbb{P}}$-a.s. as $\varepsilon \rightarrow 0$.
\\
For the last term of $M(\varrho, \bu, V)_t $ which includes the potential $V$ in the convergence analysis we have to distinguish between $\gamma<2$ and $\gamma\geq 2$.\\
If $\gamma\in (\frac{3}{2}, 2)$,  as a result of Corollary \ref{cor:tildeWeakSol}, we can conclude from \eqref{orlizSigma} that  
\begin{align}
\tilde{\mathbb{E}}\bigg[\sup_{t\in[0,T]} \Vert  \tilde{\sigma}^\varepsilon \Vert_{L^{\gamma}(\mathbb{R}^3)}^\gamma   \bigg]^p \, 
\lesssim_p  1
\label{orlizSigmab}
\end{align}
holds uniformly in $\varepsilon>0$ for any $p\in[1,\infty)$ where
\begin{align*}
\tilde{\sigma}^\varepsilon=\frac{\tilde{\varrho}^\varepsilon-1}{\varepsilon}.
\end{align*}
Now since
\begin{align*}
\sup_{t\in[0,T]} \big\Vert \varepsilon^\frac{\beta-2}{2}\nabla\tilde{V}^\varepsilon  \big\Vert_{L^2(\mathbb{R}^3)}
\lesssim
\sup_{t\in[0,T]} \big\Vert \varepsilon^\frac{\beta-2}{2}\nabla\tilde{V}^\varepsilon  \big\Vert_{L^2(\mathbb{R}^3)}^2
=
\sup_{t\in[0,T]} \big\Vert \varepsilon^{\beta-2}\vert\nabla\tilde{V}^\varepsilon \vert^2  \big\Vert_{L^1(\mathbb{R}^3)},
\end{align*}
and
\begin{align*}
\sup_{t\in[0,T]} \Vert \tilde{\varrho}^\varepsilon-1\Vert_{L^\gamma(\mathbb{R}^3)}
\lesssim
\sup_{t\in[0,T]} \Vert \tilde{\varrho}^\varepsilon-1\Vert_{L^\gamma(\mathbb{R}^3)}^\gamma
=
\varepsilon^\gamma\,
\sup_{t\in[0,T]} \Vert \tilde{\sigma}^\varepsilon \Vert_{L^\gamma(\mathbb{R}^3)}^\gamma,
\end{align*}
holds uniformly in $\varepsilon>0$, from  \eqref{velocityl6} and \eqref{orlizSigmab} and  it follows that if $\gamma\in (\frac{3}{2}, 2)$, then
\begin{equation}
\begin{aligned}
\sup_{t\in[0,T]} \bigg\Vert 
\frac{\tilde{\varrho}^\varepsilon -1}{\varepsilon^2}\nabla \tilde{V}^\varepsilon \bigg\Vert_{L^\frac{2\gamma}{\gamma+2}(\mathbb{R}^3)}
\lesssim\varepsilon^{\gamma-\beta/2 -1} \rightarrow 0
\end{aligned}
\end{equation}
$\tilde{\mathbb{P}}$-a.s. as $\varepsilon\rightarrow0$ provided that
\begin{align}
\label{betaGammaMinusOne}
\beta < 2(\gamma-1), \quad \frac{3}{2}<\gamma< 2.
\end{align}
On the other hand, if $\gamma\geq2$, then we obtain for  any $2<q<6$ and its H\"older conjugate $q'>0$,
\begin{equation}
\begin{aligned}
\label{epsPlusEps}
\bigg\vert
\int_0^t&\int_{\mathbb{R}^3}\bigg(\frac{\tilde{\varrho}^\varepsilon -1}{\varepsilon^2}\nabla \tilde{V}^\varepsilon\bigg) \cdot \bm{\phi}\dx \mathrm{d}s
\bigg\vert
\lesssim
\varepsilon^{-1}
\sup_{t\in[0,T]} \big\Vert\nabla\tilde{V}^\varepsilon\cdot \bm{\phi}  \big\Vert_{W^{1,q'}(\mathbb{R}^3)}
\, \Vert \tilde{\sigma}^\varepsilon \Vert_{L^2(0,T;W^{-1,q}(\mathbb{R}^3))}
\\&
\lesssim
\varepsilon^{-1}
\Vert \bm{\phi} \Vert_{W^{1,\frac{2q}{q-2}}(\mathbb{R}^3)}
\bigg(
\sup_{t\in[0,T]} \big\Vert\nabla\tilde{V}^\varepsilon \big\Vert_{L^2(\mathbb{R}^3)}
+
\sup_{t\in[0,T]} \big\Vert\Delta\tilde{V}^\varepsilon \big\Vert_{L^2(\mathbb{R}^3)}
\bigg)
 \Vert \tilde{\sigma}^\varepsilon \Vert_{L^2((0,T)\times \mathbb{R}^3)}
\end{aligned}
\end{equation}
$\tilde{\mathbb{P}}$-a.s.  for all $t\in [0,T]$ and $\bm{\phi} \in C^\infty_{c, \mathrm{div}}(\mathbb{R}^3)$.
However, by using the Poisson equation \eqref{elecFieldB}, the estimate \eqref{orlizSigmaNot} and the bottom left estimate of \eqref{energyEst1}, we obtain 
\begin{equation}
\begin{aligned}
\label{epsPlusEps1}
\sup_{t\in[0,T]} \big\Vert\nabla\tilde{V}^\varepsilon \big\Vert_{L^2(\mathbb{R}^3)}
+
\sup_{t\in[0,T]} \big\Vert\Delta\tilde{V}^\varepsilon \big\Vert_{L^2(\mathbb{R}^3)}
 \lesssim \varepsilon^\frac{2-\beta}{2}+ \varepsilon^{1-\beta}
\end{aligned}
\end{equation}
uniformly in $\varepsilon>0$. Now by using the equality of laws given by Proposition \ref{prop:Jakubow0} and Sobolev embedding in time, we can conclude from \eqref{orlizSigmaNot} that $\tilde{\sigma}^\varepsilon$ is $\tilde{\mathbb{P}}$-a.s. square-integrable in spacetime and thus
\begin{align}
\label{convolutionConv}
[\tilde{\sigma}^\varepsilon]_\kappa \rightarrow \tilde{\sigma}^\varepsilon \quad \text{in} \quad L^2((0,T)\times \mathbb{R}^3)
\end{align}
$\tilde{\mathbb{P}}$-a.s. as $\kappa\rightarrow0$. On the other hand, by \eqref{schrit5}, $[\tilde{\sigma}^\varepsilon]_\kappa$ can be bounded  in terms of $\varepsilon>0$. Therefore by writing $\tilde{\sigma}^\varepsilon =[\tilde{\sigma}^\varepsilon]_\kappa + (\tilde{\sigma}^\varepsilon-[\tilde{\sigma}^\varepsilon]_\kappa)$, we can conclude by using the equality of laws given by Proposition \ref{prop:Jakubow0}, \eqref{convolutionConv} and the acoustic estimate \eqref{schrit5}, that $\tilde{\mathbb{P}}$-a.s.,
\begin{align}
\label{oneMinusTwoBeta}
\Vert \tilde{\sigma}^\varepsilon \Vert_{L^2((0,T)\times\mathbb{R}^3)}\lesssim 
\varepsilon^{1-(2+\delta)\beta}
\end{align}
holds uniformly in $\varepsilon>0$ for any $\delta> 0$. If we now substitute \eqref{oneMinusTwoBeta} and \eqref{epsPlusEps1} into \eqref{epsPlusEps}, then $\tilde{\mathbb{P}}$-a.s. ,
\begin{equation}
\label{epsPlusEps2}
\bigg\vert
\int_0^t\int_{\mathbb{R}^3}\bigg(\frac{\tilde{\varrho}^\varepsilon -1}{\varepsilon^2}\nabla \tilde{V}^\varepsilon\bigg) \cdot \bm{\phi}\dx \mathrm{d}s
\bigg\vert
\lesssim
\varepsilon^{1-\beta(5/2+\delta)}+ \varepsilon^{1-\beta(3+\delta)} \lesssim
\varepsilon^{1-\beta(3+\delta)}
\end{equation}
holds uniformly in $\varepsilon>0$ when $\gamma\geq2$. In this case, convergence to zero as $\varepsilon\rightarrow0$ holds true provided that 
\begin{align}
\beta < \frac{1}{3+\delta}.
\label{betaGammaMinusOneb}
\end{align}
Since  $0<\beta<\frac{1}{2+\delta}$ for any $\delta>0$ the two conditions \eqref{betaGammaMinusOne}, \eqref{betaGammaMinusOneb} are satisfied  and 
our final result holds true.
\end{proof}
To pass to the limit in the stochastic integral, 
we need the following analogous version of \cite[Lemma 2.6.6]{breit2018stoch} on the whole space, see \cite[Lemma 2.4.35]{mensah2019theses}. We refer to the former for the proof.
\begin{lemma} 
\label{lem:noiseProduct}
Let $\mathfrak{U}\subset\mathfrak{U}_0$ be a separable Hilbert space and let $\big(\Omega, \mathscr{F}, \mathbb{P} \big)$ be a complete probability space. For $n\in \mathbb{N}$, let $W_n $ be an $\big( \mathscr{F}_t^n\big)$-cylindrical Wiener process such that
\begin{align*}
W_n \rightarrow W \quad \text{in} \quad C\big([0,T]; \mathfrak{U}_0 \big)\quad \text{in probability}
\end{align*}
with $W=\sum_{k\in \mathbb{N}} e_k \beta_k$. Also, for each  $n\in \mathbb{N}$, let $\Phi_n$ be an $( \mathscr{F}_t^n)$-progressively measurable stochastic process belonging to $L^2 \big(0,T; L_2\big(\mathfrak{U};W^{l,2}(\mathbb{R}^3) \big) \big)$ $\mathbb{P}$-a.s. for some $l\in \mathbb{R}$ and for which,
\begin{align*}
\Phi_n \rightarrow \Phi \quad \text{in} \quad L^2 \big(0,T; L_2\big(\mathfrak{U};W^{l,2}(\mathbb{R}^3) \big) \big) \quad \text{in probability}.
\end{align*}
Then after possible change on a measure zero set in $\Omega \times (0,T)$, we gain 
\begin{align*}
\int_0^\cdot \Phi_n \, \mathrm{d}W_n \rightarrow  \int_0^\cdot \Phi \, \mathrm{d}W
\quad \text{in} \quad L^2\big(0,T;W^{l,2}(\mathbb{R}^3) \big) \quad \text{in probability}
\end{align*}
and that $\Phi$ is a progressively measurable process with respect to the following filtration.
\begin{align*}
\sigma  \bigg( \bigcup_{k=1}^\infty \sigma_t \big[\Phi e_k \big] \cup \sigma_t\big[ \beta_k \big] \bigg)
\end{align*}
where in the above, $(\beta_k)_{k\in \mathbb{N}}$ is a family of mutually independent Brownian motions and $(e_k)_{k\in \mathbb{N}}$ are orthonormal basis of a separable Hilbert space $\mathfrak{U}$.
\end{lemma}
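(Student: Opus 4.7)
The plan is to establish the convergence in three moves: truncate the infinite-dimensional noise so that only finitely many scalar Brownian motions are involved, invoke a standard stability result for finite-dimensional It\^o integrals, and control the remainder by It\^o's isometry. A preliminary step is to upgrade the assumed convergence $W_n \to W$ in $C([0,T];\mathfrak{U}_0)$ in probability to componentwise convergence $\beta_k^n \to \beta_k$ in $C([0,T])$ in probability for every $k$, where $W_n = \sum_k e_k \beta_k^n$; this follows from the Hilbert--Schmidt embedding $\mathfrak{U} \hookrightarrow \mathfrak{U}_0$ and implies in particular that each limit $\beta_k$ is a Brownian motion.

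The mismatch between the convergence-in-probability hypothesis on $(\Phi_n)$ and the $L^2$-framework required by It\^o's isometry I would circumvent by a standard localization. For $R > 0$, set
\[
\tau_n^R = \inf\bigl\{ t\in [0,T]\,:\, \Vert \Phi_n\Vert^2_{L^2(0,t;L_2(\mathfrak{U}; W^{l,2}(\mathbb{R}^3)))} \ge R \bigr\},\qquad \Phi_n^R := \Phi_n \mathbf{1}_{[0,\tau_n^R]}.
\]
The truncated integrands are uniformly $L^2$-bounded, they agree with $\Phi_n$ on events whose complements have probability tending to $0$ as $R\to\infty$ uniformly in $n$ (by the tightness induced by convergence in probability), and after extracting a subsequence that converges almost surely, one obtains $\Phi_n^R \to \Phi^R$ in $L^2(\Omega;L^2(0,T;L_2(\mathfrak{U};W^{l,2}(\mathbb{R}^3))))$ by dominated convergence. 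Splitting $W_n = W_n^N + R_n^N$ with $W_n^N = \sum_{k=1}^N e_k \beta_k^n$, the finite sum $\sum_{k\le N}\int_0^\cdot \Phi_n^R e_k \, \mathrm{d}\beta_k^n$ converges in $L^2(0,T;W^{l,2}(\mathbb{R}^3))$ in probability to $\sum_{k\le N}\int_0^\cdot \Phi^R e_k \, \mathrm{d}\beta_k$ by a Kurtz--Protter-type stability result for It\^o integrals against finitely many real-valued Brownian motions. The tail is controlled via It\^o's isometry,
\[
\mathbb{E}\Bigl\Vert\int_0^\cdot \Phi_n^R \, \mathrm{d}R_n^N\Bigr\Vert_{L^2(0,T;W^{l,2}(\mathbb{R}^3))}^2 \lesssim \mathbb{E}\int_0^T \sum_{k>N} \Vert \Phi_n^R e_k\Vert_{W^{l,2}(\mathbb{R}^3)}^2 \, \mathrm{d}t,
\]
and the right-hand side vanishes uniformly in $n$ as $N\to\infty$ by the Hilbert--Schmidt structure together with a Scheff\'e-type uniform integrability argument along the almost surely convergent subsequence. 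Sending $n\to\infty$, then $N\to\infty$, then $R\to\infty$, yields the stated convergence of the stochastic integrals in probability.

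For the progressive measurability of $\Phi$ with respect to the stated filtration, extract a further subsequence along which $\Phi_n e_k \to \Phi e_k$ for almost every $(\omega,t)$ for every $k \in \mathbb{N}$; since each $\Phi_n e_k$ is $(\mathscr{F}_t^n)$-progressively measurable and $\beta_j^n$ is $(\mathscr{F}_t^n)$-adapted, the pointwise limit $\Phi e_k$ is, after redefinition on a null set, progressively measurable with respect to the filtration generated by $\{\Phi e_j, \beta_j\}_{j\in\mathbb{N}}$. The main obstacle will be the bookkeeping linking the convergence-in-probability hypothesis to the $L^2$-tools of It\^o integration; once the localization $\tau_n^R$ and the uniform tail control are in place, each individual limit is a routine application of classical It\^o-integral stability, and the full argument reproduces that of \cite[Lemma 2.6.6]{breit2018stoch}.
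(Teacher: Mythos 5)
The paper does not actually prove this lemma: it is quoted verbatim from \cite[Lemma 2.6.6]{breit2018stoch} (see also \cite[Lemma 2.4.35]{mensah2019theses} and, originally, Lemma 2.1 of Debussche--Glatt-Holtz--Temam), and the text explicitly defers to that reference for the proof. Your outline is therefore being compared against the proof in the cited source rather than against anything written here. That proof localizes much as you do, but then mollifies all the $\Phi_n$ in time with a common parameter, writes each stochastic integral as a limit of Riemann sums, and passes to the limit in the sums using the uniform convergence of $W_n$; your route instead truncates the noise to finitely many modes, invokes a Kurtz--Protter-type stability theorem for the finite-dimensional part, and controls the tail by It\^o's isometry plus uniform integrability of the Hilbert--Schmidt tails. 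Both are viable; your decomposition in the noise modes is arguably cleaner for the tail estimate, while the reference's time-mollification is doing precisely the work you have compressed into the phrase ``Kurtz--Protter-type stability result'': that theorem is stated for c\`adl\`ag adapted integrands, so for integrands that are merely progressively measurable and $L^2$ in time one must still regularize in time uniformly in $n$ before applying it --- this is the technical heart of the lemma and should not be left implicit.

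Two further points need attention. First, your identification $\Phi_n^R\to\Phi^R$ is not automatic, because the stopping times $\tau_n^R$ need not converge to $\tau^R$ at critical levels $R$ where $t\mapsto\Vert\Phi\Vert^2_{L^2(0,t;L_2)}$ grazes the value $R$; the standard fix is to observe that, for each $\omega$, all but countably many $R$ are non-critical, and to run the argument for such $R$ only. Second, for the limit object $\int_0^\cdot\Phi\,\mathrm{d}W$ to be well defined you must also show that $W$ is a cylindrical Wiener process \emph{with respect to the filtration displayed in the statement}, i.e.\ that the increments of each $\beta_k$ remain independent of the history generated jointly by $\Phi$ and $(\beta_j)_j$; this is obtained by passing to the limit in the corresponding (conditional-expectation or martingale) identities for $(\Phi_n,W_n)$ and is part of the content of the cited lemma, not merely a formality. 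With these two repairs, and with the time-regularization step made explicit, your argument is sound and essentially reproduces the cited proof by a slightly different decomposition.
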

Moving on, we combine the assumptions on the noise term \eqref{noiseAssumptionWholespace1}--\eqref{noiseAssumptionWholespace2} with the limits \eqref{almostSureDensityA}--\eqref{almostSurePGradMomentumA} (also recall \eqref{momEqDenVelo}) and Lemma \ref{lem:lastLem} which allow us to obtain the convergence
\begin{align}
\label{limitnoiseTilde}
\mathbf{G}(\tilde{\varrho}^\varepsilon, \tilde{\varrho}^\varepsilon\tilde{\mathbf{U}}^\varepsilon) 
\rightarrow
\mathbf{G}(1,\tilde{\mathbf{U}}^\varepsilon)
\quad \text{in} \quad
L^2\big( 0,T;L_2(\mathfrak{U}; W^{-l,2}(\mathbb{R}^3))\big)
\end{align}
$\tilde{\mathbb{P}}$-a.s. for some $l>0$. For the details, we refer the reader to \cite[Page 294]{breit2018stoch} which was done for a more restrictive assumption on the noise coefficient due to the lack of compactness for the gradient part of the velocity field. Nevertheless, an analogous analysis holds true for the less restrictive noise assumption \eqref{noiseAssumptionWholespace2} as already illustrated in the proof of \cite[Lemma 11]{mensah2016existence}.
If we then combine \eqref{limitnoiseTilde} with the convergence \eqref{almostSureWiener} for $\tilde{W}^\varepsilon$, we are able to pass to the limit in the stochastic integral by virtue of  Lemma \ref{lem:noiseProduct}.
\\ 
By collecting the various results above, we can finally conclude with the following result which ends the proof of the main Theorem \ref{thm:wholespace}
\begin{theorem}
\label{thm:conclusionWholeSpace}
The following $[(\tilde{\Omega},\tilde{\mathscr{F}},(\tilde{\mathscr{F}}_t)_{t\geq0}, \tilde{\mathbb{P}}); \tilde{\mathbf{U}}, \tilde{W}]$
is a weak martingale solution of \eqref{incomprSPDE} on $\mathbb{R}^3$ in the sense of  Definition \ref{def:martSolutionIncompre} with the initial law $\Lambda$.
\end{theorem}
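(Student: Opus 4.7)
The plan is to verify each of the four conditions of Definition \ref{def:martSolutionIncompre} for $[(\tilde{\Omega},\tilde{\mathscr{F}},(\tilde{\mathscr{F}}_t)_{t\geq0}, \tilde{\mathbb{P}}); \tilde{\mathbf{U}}, \tilde{W}]$. Condition (1), that $\tilde{W}$ is a cylindrical Wiener process with respect to the right-continuous, complete filtration $(\tilde{\mathscr{F}}_t)_{t\geq0}$, is precisely what was established in the paragraph preceding Corollary \ref{cor:tildeWeakSol}; $(\tilde{\mathscr{F}}_t)$-adaptedness of $\tilde{\mathbf{U}}$ is built into the construction of that filtration.

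For the regularity in condition (2), I would invoke Proposition \ref{prop:Jakubow0}(i) to transfer the uniform bound of Lemma \ref{lem:veloAndSomething} from $\mathbf{u}^\varepsilon$ to $\tilde{\mathbf{U}}^\varepsilon$, and then pass to the weak limit via \eqref{almostSurePVelocity} to conclude $\tilde{\mathbf{U}}\in L^2(0,T;W^{1,2}(\mathbb{R}^3))$ $\tilde{\mathbb{P}}$-a.s. Divergence-freeness follows from $\mathrm{div}\,\tilde{\mathbf{U}}^\varepsilon=\mathrm{div}\,\mathcal{Q}\tilde{\mathbf{U}}^\varepsilon$ combined with \eqref{almostSurePVelocityA}, which forces $\mathrm{div}\,\tilde{\mathbf{U}}=0$ distributionally; weak time-continuity in $L^2_{\mathrm{div}}$ is then inferred a posteriori from the weak formulation itself. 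Condition (3) follows from the hypothesised weak convergence of the marginal law of $\Lambda^\varepsilon$ on $L^{2\gamma/(\gamma+1)}(\mathbb{R}^3)$ to $\Lambda$, combined with the equality of laws of Proposition \ref{prop:Jakubow0}(i), the limit \eqref{almostSurePMomentum} evaluated at $t=0$, and the identification \eqref{momEqVelo} ensuring that $\tilde{\mathbf{U}}(0)$ has law $\Lambda$.

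The core task is condition (4). Starting from the approximate weak formulation \eqref{eq:distributionalSol} provided by Corollary \ref{cor:tildeWeakSol}, I would test against $\bm{\phi}\in C^\infty_{c,\mathrm{div}}(\mathbb{R}^3)$. Since $\mathrm{div}\,\bm{\phi}=0$, the singular pressure term $\varepsilon^{-2}\int (\tilde{\varrho}^\varepsilon)^\gamma\,\mathrm{div}\,\bm{\phi}\,\mathrm{d}x$ and the bulk-viscosity contribution $(\nu_1+\nu_2)\int\mathrm{div}\,\tilde{\mathbf{U}}^\varepsilon\,\mathrm{div}\,\bm{\phi}\,\mathrm{d}x$ drop out identically. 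The remaining deterministic terms collect precisely into $M(\tilde{\varrho}^\varepsilon,\tilde{\mathbf{U}}^\varepsilon,\tilde{V}^\varepsilon)_t$ from the preceding lemma, which delivers $M(\tilde{\varrho}^\varepsilon,\tilde{\mathbf{U}}^\varepsilon,\tilde{V}^\varepsilon)_t \to M(1,\tilde{\mathbf{U}},\tilde{V})_t$ $\tilde{\mathbb{P}}$-a.s.; since $\tilde{\varrho}\equiv 1$ almost surely, this matches the deterministic portion of the weak formulation of \eqref{incomprSPDE}.

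For the stochastic integral I invoke Lemma \ref{lem:noiseProduct} with $\Phi_\varepsilon=\mathbf{G}(\tilde{\varrho}^\varepsilon,\tilde{\varrho}^\varepsilon\tilde{\mathbf{U}}^\varepsilon)$ and $W_\varepsilon=\tilde{W}^\varepsilon$; the two hypotheses of that lemma are exactly what \eqref{limitnoiseTilde} and \eqref{almostSureWiener} supply. This yields convergence in probability of $\int_0^\cdot \mathbf{G}(\tilde{\varrho}^\varepsilon,\tilde{\varrho}^\varepsilon\tilde{\mathbf{U}}^\varepsilon)\,\mathrm{d}\tilde{W}^\varepsilon$ to $\int_0^\cdot \mathbf{G}(1,\tilde{\mathbf{U}})\,\mathrm{d}\tilde{W}$ in an appropriate negative-Sobolev space, which combined with the deterministic limit closes the weak formulation. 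No genuinely new obstacle arises at this final stage: the hard work --- namely the dispersive control of $\varepsilon^{-2}\tilde{\varrho}^\varepsilon\nabla\tilde{V}^\varepsilon$ via the first-order Klein--Gordon analysis of Section \ref{sec:gradPart}, which fixes the range $0<\beta<1/(2+\delta)$ --- has already been discharged upstream, and here one is simply collecting the pieces.
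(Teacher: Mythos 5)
Your proposal is correct and follows essentially the same route as the paper: condition (1) via the filtration construction preceding Corollary \ref{cor:tildeWeakSol}, condition (4) by combining Corollary \ref{cor:tildeWeakSol}, the lemma on $M(\tilde{\varrho}^\varepsilon,\tilde{\mathbf{U}}^\varepsilon,\tilde{V}^\varepsilon)_t$, and Lemma \ref{lem:noiseProduct} with \eqref{limitnoiseTilde} and \eqref{almostSureWiener}. The only detail worth making explicit is that the full electric-field term $\varepsilon^{-2}\tilde{\varrho}^\varepsilon\nabla\tilde{V}^\varepsilon$ does not itself vanish against solenoidal test functions; one must first split off the pure gradient $\varepsilon^{-2}\nabla\tilde{V}^\varepsilon$ (which does vanish) so that only $\varepsilon^{-2}(\tilde{\varrho}^\varepsilon-1)\nabla\tilde{V}^\varepsilon$ remains, which is exactly the term carried by $M$.
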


\section{Related singular limits: the zero-electron-mass limit}
\label{appendix}
In this section we analyse a further singular limit  related to  plasma physics which, although it has a different physical meaning compared to  the previous one, can  be handled, from a mathematical point of view, with the same tools developed in the previous sections. We recall that  a plasma consists of a negatively charged elections $q^e=-1$ of mass $m^e$, a positively charged ion $q^i=+1$ of mass $m^i$ and a negligible amount of neutral particles. If we let $\varrho^e=\varrho^e(t,x)\in \mathbb{R}_{\geq 0}$ and $\bu^e=\bu^e(t,x)\in \mathbb{R}^3$ (respectively, $\varrho^i=\varrho^i(t,x)\in \mathbb{R}_{\geq 0}$ and $\bu^i=\bu^i(t,x)\in \mathbb{R}^3$) be the scaled density and mean velocity of the electron (respectively, ion) and we let $V=V(t,x)$ be the scaled electric potential, then on the macroscopic level, a viscous Newtonian plasma subject to random forces satisfy the following (suitably scaled) damped Navier--Stokes--Poisson system 
\begin{align}
&\mathrm{d} \varrho^\alpha + \mathrm{div}(\varrho^\alpha \mathbf{u}^\alpha) \, \mathrm{d}t =0, 
\label{contEqC}
\\
&\mathrm{d}(\varrho^\alpha \mathbf{u}^\alpha) +
\bigg[
\mathrm{div} (\varrho^\alpha \mathbf{u}^\alpha \otimes \mathbf{u}^\alpha) +\frac{1}{\delta^\alpha} \nabla p  (\varrho^\alpha) \bigg]\, \mathrm{d}t 
=\big[ \nu_1 \Delta \mathbf{u}^\alpha + (\nu_2 + \nu_1)\nabla \mathrm{div} \mathbf{u}^\alpha
\nonumber\\
&\qquad\qquad\qquad\qquad\qquad
 + \frac{q^\alpha}{\delta^\alpha} {\varrho^\alpha} \nabla V \big] \, \mathrm{d}t
+ \frac{1}{\tau^\alpha}\varrho^\alpha\mathbf{u}^\alpha
\, \mathrm{d}t
 +\frac{1}{\tau^\alpha}  \mathbf{G}(\varrho^\alpha,  \varrho^\alpha\mathbf{u}^\alpha) \, \mathrm{d}W^\alpha,
\label{momEqC}\\
&\lambda^2 \Delta V = \varrho^e -\varrho^i 
\label{elecFieldC}
\end{align}
where $\alpha=e,i$ are indexes standing for electrons and ions, $\tau^\alpha>0$ are the scaled relaxation times, $\lambda>0$ is the Debye length and 
\begin{align}
p(\varrho^\alpha)=a(\varrho^\alpha)^\gamma
\end{align}
is the isentropic pressure with $a>0$ 
and $\gamma> \frac{3}{2}$ being the adiabatic exponent. In addition,  $\delta^\alpha = \frac{m^\alpha v^2_0}{k_BT_0}$ is a dimensionless parameter where $k_B$ is the Boltzmann constant and  $v_0$ and $T_0$ are the typical velocity and temperature values respectively for the plasma. See \cite{jungel2000hierarchy, jungel2001hierarchy} for the deterministic counterpart of the above system and \cite[Section 1.1]{mensah2019theses} for the modelling of stochastic fluids.
\\
Since it is well-known that the electron mass $m^e$ is far smaller than the ion mass $m^i$, i.e., $m^e\ll m^i$, movements of ions are far limited with respect to movements of electrons on a finite observation time. In other words, on a time scale that one observes the first `reasonable' movement in the electrons, the ions are essentially stationary. This difference in time scales offers justification to decouple the system and study separately, the evolution of ions or that of the electrons. Moreover, since the parameter $\delta^\alpha$ is the ratio between the electron and the ion mass (for details see \cite{jungel2000hierarchy}) then the limit $\delta^\alpha\to 0$ makes sense and, since we assume that the ion density is given, this limit goes under the name of {\em zero electron mass limit}.

\subsection{Evolution of the electrons}
Hence we consider the ion density given and we concentrate on just the electrons so that the superscript $\alpha=e$. Since the ions and electrons are coupled only through the Poisson equation, for simplicity, we set $\varrho^i=1$ as our background charged ion. We also assume that the relaxation times $\tau^\alpha$ are fixed and that the plasma is not subject to drift. With these assumptions in hand, for  $\varepsilon>0$  small, we set the dimensionless parameter $\delta^e$, which is proportional to the electron mass $m^e$, to $\varepsilon^2$ i.e. $\delta^e=\varepsilon^2$. We also set $\lambda=a=\tau=1$, $\varrho^e=\varrho^\varepsilon$, $\bu^e=\bu^\varepsilon$, $V=V^\varepsilon$, $W^e=W$ and analyse the following system
\begin{align}
&\mathrm{d} \varrho^\varepsilon + \mathrm{div}(\varrho^\varepsilon \mathbf{u}^\varepsilon) \, \mathrm{d}t =0, 
\label{contEqD}\\
&\mathrm{d}(\varrho^\varepsilon \mathbf{u}^\varepsilon) +
\big[
\mathrm{div} (\varrho^\varepsilon \mathbf{u}^\varepsilon \otimes \mathbf{u}^\varepsilon) + \varepsilon^{-2} \nabla  (\varrho^\varepsilon)^\gamma \big]\, \mathrm{d}t 
=\big[ \nu_1 \Delta \mathbf{u}^\varepsilon + (\nu_2 + \nu_1)\nabla \mathrm{div} \mathbf{u}^\varepsilon
\nonumber\\
&\qquad\qquad\qquad\qquad\qquad\qquad\qquad
- \varepsilon^{-2}\varrho^\varepsilon \nabla V \big] \, \mathrm{d}t
 + \mathbf{G}(\varrho^\varepsilon,  \varrho^\varepsilon\mathbf{u}^\varepsilon) \, \mathrm{d}W,
\label{momEqD}\\
& \Delta V = \varrho^\varepsilon -1, 
\label{elecFieldD}
\end{align}
as $\varepsilon\rightarrow0$.
\\
For completness we mention that an equivalent formulation of the system \eqref{contEqD}--\eqref{elecFieldD} is given by rescaling the electric potential 
 as $V=\varepsilon^2\widetilde{V}$ and reads as follows,
\begin{align}
&\mathrm{d} \varrho^\varepsilon + \mathrm{div}(\varrho^\varepsilon \mathbf{u}^\varepsilon) \, \mathrm{d}t =0, 
\label{contEqDb}\\
&\mathrm{d}(\varrho^\varepsilon \mathbf{u}^\varepsilon) +
\big[
\mathrm{div} (\varrho^\varepsilon \mathbf{u}^\varepsilon \otimes \mathbf{u}^\varepsilon) + \varepsilon^{-2} \nabla  (\varrho^\varepsilon)^\gamma \big]\, \mathrm{d}t 
=\big[ \nu_1 \Delta \mathbf{u}^\varepsilon + (\nu_2 + \nu_1)\nabla \mathrm{div} \mathbf{u}^\varepsilon
\nonumber\\
&\qquad\qquad\qquad\qquad\qquad\qquad\qquad
- \varrho^\varepsilon \nabla  \widetilde{V} \big] \, \mathrm{d}t
 + \mathbf{G}(\varrho^\varepsilon,  \varrho^\varepsilon\mathbf{u}^\varepsilon) \, \mathrm{d}W,
\label{momEqDb}\\
& \varepsilon^{2}\Delta \widetilde{V} = \varrho^\varepsilon -1. 
\label{elecFieldDb}
\end{align}
We observe that from a purely mathematical point of view, the system \eqref{contEqD}--\eqref{elecFieldD} corresponds to \eqref{contEqB}--\eqref{elecFieldB} when $\beta=0$. This observation can be made rigorous.\\
Indeed, by adapting the  proof of  Proposition \ref{prop:expBound} to the case $\beta=0$, we can prove the following result for the acoustic system.
\begin{proposition}
\label{nonsingularKG}
For any $p\in[1,\infty)$ and  $r\in [1,\infty]$, the solution pair $(\nabla\Psi^\varepsilon, \sigma^\varepsilon)$ of the system
\begin{equation}
\begin{aligned}
\mathrm{d}\sigma^\varepsilon +  \Delta {\Psi}^\varepsilon \,\mathrm{d}t   =  0,&   \\
 \mathrm{d}\nabla{\Psi}^\varepsilon  +  \big(\gamma \nabla - \nabla \Delta^{-1} \big) \sigma^\varepsilon\,\mathrm{d}t   =    0,&
 \\
\sigma^\varepsilon (0) = \sigma^\varepsilon_0 ; \quad \nabla{\Psi}^\varepsilon(0) = \nabla{\Psi}^\varepsilon_0&
\end{aligned}
\end{equation}
satisfy the estimate
\begin{align*}
&\mathbb{E} \, \Vert  \nabla \Psi^\varepsilon \Vert_{ L^r\big(0,T;L^2(\mathbb{R}^3) \big)}^p
+
\mathbb{E} \, \Vert  \sigma^\varepsilon \Vert_{ L^r\big(0,T;L^2(\mathbb{R}^3) \big)}^p 
\lesssim
\mathbb{E} \, \Vert \sigma_0^\varepsilon \Vert_{L^2(\mathbb{R}^3)}^p
+
\mathbb{E} \, \Vert \nabla \Psi_0^\varepsilon \Vert_{L^2(\mathbb{R}^3)}^p
\end{align*}
uniformly in $\varepsilon$ for $\sigma_0^\varepsilon \in L^p(\Omega; L^2(\mathbb{R}^3))$ and $\nabla \Psi_0^\varepsilon \in L^p(\Omega; L^2(\mathbb{R}^3))$.
\end{proposition}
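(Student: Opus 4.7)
The plan is to follow the Fourier-analytic strategy used in the proof of Proposition~\ref{prop:expBound} verbatim, setting the singular parameter to $\varepsilon^\beta = 1$ throughout. Concretely, I would first pass to Fourier space and write the explicit solution pair $(\hat{\sigma}^\varepsilon, i\xi\hat{\Psi}^\varepsilon)$ by specialising the representation \eqref{solutionFourier} to the case $\beta = 0$: the time evolution becomes $e^{\pm i\omega(\xi) t}$ with the unit-mass Klein--Gordon dispersion relation $\omega(\xi) = \sqrt{1+\gamma|\xi|^2}$, which is bounded below by $1$ uniformly in $\xi$ and $\varepsilon$, so no factor of $\varepsilon^{-\beta/2}$ shows up in the amplitude of $\hat\Psi_0^\varepsilon$ contributing to $\hat\sigma^\varepsilon$ (compare with the $\frac{\sqrt{\varepsilon^\beta(1+\gamma\varepsilon^\beta|\xi|^2)}}{\varepsilon^\beta}$ factors in \eqref{solutionFourier}).

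Next, using $|e^{\pm i\omega(\xi)t}| = 1$, I would derive the pointwise-in-$\xi$ bounds $|\hat{\sigma}^\varepsilon(\xi,t)|^2 \lesssim |\hat{\sigma}^\varepsilon_0(\xi)|^2 + m(\xi)|\widehat{\nabla\Psi^\varepsilon_0}(\xi)|^2$ and the analogous inequality for $|\widehat{\nabla\Psi^\varepsilon}(\xi,t)|^2$, in which the Fourier multiplier $m(\xi) = \frac{|\xi|^2}{1+\gamma|\xi|^2}$ replaces the $\varepsilon$-dependent $m^\varepsilon$ of Proposition~\ref{prop:expBound}. The decisive observation is that $m$ is bounded on all of $\mathbb{R}^3$ by $1/\gamma < 2/3$ (the elementary high/low frequency case analysis A--B of Proposition~\ref{prop:expBound}, carried out with $\varepsilon^\beta = 1$ and $\gamma > 3/2$, applies directly). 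The analogous multiplier controlling $\widehat{\nabla\Psi^\varepsilon}$ is likewise dominated by a constant depending only on $\gamma$ on the relevant frequency regime once $\beta = 0$ eliminates the $\varepsilon^{-2\beta}$ loss; in particular there is no singular prefactor $\varepsilon^{-3\beta}$ to absorb.

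Finally, I would integrate in $\xi$ via Plancherel to recover the spatial $L^2$ norms, take the time $L^r(0,T)$ norm (finite for any $r\in[1,\infty]$ since $T<\infty$) and raise to the $p$-th expectation, yielding exactly the claimed bound. The main obstacle in Proposition~\ref{prop:expBound}---tracking the singular dependence on $\varepsilon^\beta$ through the Fourier multipliers and, in particular, producing the explicit $\varepsilon^{-3\beta}$ factor---is therefore absent here: freezing the dispersive scale at $\varepsilon^\beta = 1$ reduces the argument to a uniform energy-type estimate for a standard unit-mass Klein--Gordon equation, so the proof is completed along the same lines as Proposition~\ref{prop:expBound} but with every occurrence of $\varepsilon^\beta$ replaced by $1$.
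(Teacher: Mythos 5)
Your proposal is exactly the paper's argument: the paper proves Proposition~\ref{nonsingularKG} with the single remark that one ``adapts the proof of Proposition~\ref{prop:expBound} to the case $\beta=0$'', which is precisely the substitution $\varepsilon^\beta\mapsto 1$ in the Fourier representation \eqref{solutionFourier} and in the multipliers $m^\varepsilon(\xi)$, $n^\varepsilon(\xi)$ that you carry out, followed by Plancherel and the $L^r(0,T)$ and $p$-th moment bounds. The only caveat is that your bound on the multiplier controlling $\widehat{\nabla\Psi^\varepsilon}$ (which behaves like $|\xi|^{-2}$ near $\xi=0$) implicitly relies on the same restriction to the frequency regimes of cases A--B as the paper's own computation, so your proof is faithful to, and no less complete than, the original.
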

Then, by following the same line of arguments of the proof of the Theorem \ref{thm:wholespace}, we  obtain the stochastic case the following zero-electron-mass number limit result (for the deterministic setting compare with \cite{DoFeNo12}). 
\begin{theorem}
\label{thm:electronMass}
Let  $\Lambda$ be a given Borel probability measure on $L^2_{\mathrm{div}}(\mathbb{R}^3)$. For $\varepsilon >0$ and  $\gamma>3/2$, we let $\Lambda^\varepsilon$ be a family of Borel probability measures on $\big[ L^1_x \big]^2= L^1(\mathbb{R}^3) \times L^1(\mathbb{R}^3)$ such that
\begin{equation}
\begin{aligned}
\Lambda^\varepsilon  \Big\{ (\varrho, \mathbf{m})  \, : \,   
 \vert \varrho-1 \vert\leq  \varepsilon M  \Big\}=1
\end{aligned}
\end{equation}
holds for a deterministic constant $M>0$ which is independent of $\varepsilon>0$. For all $p\in[1,\infty)$, we assume that the following moment estimate
\begin{equation}
\begin{aligned}
\int_{[ L^1_x]^2}  \left\Vert \frac{1}{2}\frac{\vert\mathbf{m}\vert^2}{\varrho}  +  \frac{1}{\varepsilon^2} H(\varrho,1) 
+
\frac{1}{\varepsilon^2} \vert \nabla V \vert^2 \right\Vert^p_{L^1_x}   \mathrm{d}\Lambda^\varepsilon(\varrho,  \mathbf{m}) \lesssim_{p} 1,
\end{aligned}
\end{equation}
holds uniformly in $\varepsilon$.
Further assume that \eqref{noiseAssumptionWholespace1}-- \eqref{noiseAssumptionWholespace2} holds and that  the marginal law of $\Lambda^\varepsilon$ corresponding to the second component converges to $\Lambda$ weakly in the sense of measures on $L^{\frac{2\gamma}{\gamma+1}}(\mathbb{R}^3 )$. If
\begin{align}
\left[(\Omega^\varepsilon,\mathscr{F}^\varepsilon,(\mathscr{F}^\varepsilon_t),\mathbb{P}^\varepsilon);\varrho^\varepsilon, \mathbf{u}^\varepsilon, V^\varepsilon, W^\varepsilon  \right]
\end{align}
is a finite energy weak martingale solution of \eqref{contEqD}--\eqref{elecFieldD} in the sense of Definition \ref{def:martSolutionExis} with initial law $\Lambda^\varepsilon$, then
\begin{align*}
\varrho^\varepsilon \rightarrow 1\quad&\text{in law in}\quad L^\infty(0,T;L^{\gamma}_{\mathrm{loc}} (\mathbb{R}^3 )),
\\
 \mathbf{u}^\varepsilon \rightarrow  \mathbf{U}\quad&\text{in law in}\quad 
  \big(L^2(0,T;W^{1,2}(\mathbb{R}^3 )),w \big)
\end{align*}
as $\varepsilon \rightarrow0$ and  $\mathbf{U}$ is a weak martingale solution of \eqref{incomprSPDE}  in the sense of  Definition \ref{def:martSolutionIncompre} with  initial law $\Lambda$.
\end{theorem}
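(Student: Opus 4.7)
The plan is to transcribe the proof of Theorem \ref{thm:wholespace} essentially line by line with $\beta$ set to zero throughout, the only substantive change being that the singular Klein--Gordon dispersive estimate of Proposition \ref{prop:expBound} is replaced by its non-singular counterpart Proposition \ref{nonsingularKG}. First I would redo the uniform energy estimates of Section \ref{sec:UniEst}: the energy inequality \eqref{augmentedEnergy}, now with the term $\varepsilon^{-2}\vert \nabla V^\varepsilon\vert^2$ appearing in place of $\varepsilon^{\beta-2}\vert \nabla V^\varepsilon\vert^2$, together with the moment assumption on the initial law, yields bounds on $\tfrac{1}{2}\varrho^\varepsilon \vert \mathbf{u}^\varepsilon\vert^2$, $\tfrac{1}{\varepsilon^2}H(\varrho^\varepsilon,1)$, $\varepsilon^{-1}\nabla V^\varepsilon$ and $\nabla \mathbf{u}^\varepsilon$, from which the density-fluctuation estimates for $\sigma^\varepsilon=(\varrho^\varepsilon-1)/\varepsilon$, the interpolation bounds on $\mathbf{u}^\varepsilon$, and the H\"older continuity in time of the solenoidal part $\mathcal{P}(\varrho^\varepsilon\mathbf{u}^\varepsilon)$ follow identically to Lemmas \ref{denFlucEstWorsened}--\ref{lem:SoleMomen}.

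The key modification occurs in the acoustic analysis of Section \ref{sec:gradPart}. The momentum equation projected onto curl-free fields still reduces to a first-order system of the form \eqref{acousticSPD111LM}, but now with $\beta=0$ the generator $\mathcal{A}$ in \eqref{KGoperator} loses its singular prefactors and the time rescaling becomes $t\mapsto t/\varepsilon$. Applying Proposition \ref{nonsingularKG} in place of Proposition \ref{prop:expBound}, the analogue of \eqref{schrit2} now reads $\mathcal{O}(\varepsilon)$ rather than $\mathcal{O}(\varepsilon^{1-2\beta})$, and similarly for the retarded convolution of the forcing terms $\mathbb{F}^\varepsilon_i, F^\varepsilon_i$ and the stochastic convolution, one gets $\mathcal{O}(\varepsilon^{1-\delta})$ after an appropriate mollification with $\kappa=\varepsilon^{2\delta/5}$, for an arbitrarily small $\delta>0$. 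Consequently the analogue of Lemma \ref{lem:GradMomen} reads $\mathbb{E}\Vert \mathcal{Q}[\varrho^\varepsilon\mathbf{u}^\varepsilon]_\kappa\Vert_{L^2_tL^2_x}^{2p}\lesssim \varepsilon^{1-\delta}\to 0$, and the gradient part of the velocity vanishes in exactly the same manner as in Lemma \ref{lem:gradVelocity}, with no threshold on $\beta$ required since $\beta=0$ is fixed.

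With these uniform bounds in hand, tightness on the path space $\chi$ follows verbatim from Lemma \ref{lem:tightDens}, and the Jakubowski--Skorokhod representation theorem produces a new probability space $(\tilde\Omega,\tilde{\mathscr{F}},\tilde{\mathbb{P}})$ carrying representatives $(\tilde\varrho^\varepsilon,\tilde{\mathbf{U}}^\varepsilon,\tilde{\mathbf{M}}^\varepsilon,\tilde W^\varepsilon)$ converging $\tilde{\mathbb{P}}$-a.s. to $(1,\tilde{\mathbf{U}},\tilde{\mathbf{U}},\tilde W)$ in the appropriate topologies, with the identities $\tilde{\mathbf{M}}^\varepsilon=\tilde\varrho^\varepsilon\tilde{\mathbf{U}}^\varepsilon$ and $\tilde{\mathbf{M}}=\tilde{\mathbf{U}}$ inherited as in Lemma \ref{lem:lastLem}.

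The main obstacle, as in the proof of Theorem \ref{thm:wholespace}, will be the passage to the limit in the singular electric-field term $\varepsilon^{-2}\varrho^\varepsilon \nabla V^\varepsilon$. Decomposing it as $\varepsilon^{-1}\sigma^\varepsilon\nabla V^\varepsilon+\varepsilon^{-2}\nabla V^\varepsilon$ and invoking the Poisson equation \eqref{elecFieldD}, the second summand is a pure gradient annihilated by the divergence-free test functions, while for the first summand I would split the analysis according to whether $\gamma\in(3/2,2)$ or $\gamma\geq 2$, exactly as in the proof of Theorem \ref{thm:wholespace}. The advantage here is that both admissibility conditions \eqref{betaGammaMinusOne} and \eqref{betaGammaMinusOneb} are automatically satisfied by $\beta=0$, so the argument closes without any restriction; the acoustic estimate provides the smallness $\Vert \tilde\sigma^\varepsilon\Vert_{L^2_{t,x}}\lesssim \varepsilon^{1-\delta}$ that defeats the singular prefactor. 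Convergence of the stochastic integral then follows from the continuity assumption \eqref{noiseAssumptionWholespace2} on the noise coefficient together with Lemma \ref{lem:noiseProduct}, identifying the limit as a weak martingale solution of \eqref{incomprSPDE} in the sense of Definition \ref{def:martSolutionIncompre}.
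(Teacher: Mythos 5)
Your proposal is correct and follows exactly the paper's own route: the paper proves Theorem \ref{thm:electronMass} precisely by substituting the non-singular dispersive estimate of Proposition \ref{nonsingularKG} for Proposition \ref{prop:expBound} and then setting $\beta=0$ throughout the proof of Theorem \ref{thm:wholespace}, which is what you do (your bookkeeping of the resulting rates $\mathcal{O}(\varepsilon)$ and $\mathcal{O}(\varepsilon^{1-\delta})$, and the observation that the admissibility conditions \eqref{betaGammaMinusOne} and \eqref{betaGammaMinusOneb} are vacuous at $\beta=0$, is accurate).
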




\end{document}